\numberwithin{equation}{section}
\theoremstyle{plain}
\newtheorem{theorem}{Theorem}[section]
\newtheorem{proposition}[theorem]{Proposition}
\newtheorem{lemma}[theorem]{Lemma}
\newtheorem{corollary}[theorem]{Corollary}
\newtheorem{conjecture}[theorem]{Conjecture}
\theoremstyle{definition}
\newtheorem{definition}[theorem]{Definition}
\newtheorem{remark}[theorem]{Remark}
\newtheorem{example}[theorem]{Example}
\newtheoremstyle{custom}
  {\topsep}   
  {\topsep}   
  {\it}          
  {}          
  {\bfseries} 
  {.}          
  {.5em}      
  {Hypothesis \thmnote{#3}} 
\theoremstyle{custom}
\newtheorem{theoremA}{Theorem}[section]
\newcommand{\R}{\mathbb{R}}
\newcommand{\N}{\mathbb{N}}
\newcommand{\sE}{\mathscr{E}}
\newcommand{\sF}{\mathcal{F}}
\newcommand{\cH}{\mathcal{H}}
\newcommand{\cS}{\mathcal{S}}
\newcommand{\cC}{\mathrm{Conv_b}(\Rn)}
\newcommand{\convex}{{\mathcal K}}
\newcommand{\convexsets}{\cC}
\renewcommand{\div}{\mathrm{div}}
\renewcommand{\d}{\mathrm{d}}
\newcommand{\sd}{\mathrm{sd}}
\newcommand{\affine}{f}
\newcommand{\anisotropicmeancurvature}{\kappa_\norm}
\newcommand{\anisotropicmeancurvatureofEt}{\kappa_\norm}
\newcommand{\anisotropicperimeter}{P_\norm}
\newcommand{\atwfunction}{h}
\newcommand{\Borelset}{A}
\newcommand{\cl}[1]{\overline{#1}}
\newcommand{\constantgminus}{c_{g^-}}
\newcommand{\constantforcingMorrey}{\gamma_\forcing}
\newcommand{\constantprescribed}{c^{}_F}
\newcommand{\constdualnorm}{\constnorm}
\newcommand{\Constdualnorm}{\Constnorm}
\newcommand{\constiso}{c_{n, \norm}}
\newcommand{\constnorm}{c_{\norm}}
\newcommand{\Constnorm}{C_{\norm}}
\newcommand{\DeGiorgiexponent}{\alpha}
\newcommand{\DeGiorgifunctional}{\mathcal G}
\newcommand{\dist}{\mathrm{dist}}
\newcommand{\dualnorm}{\norm^o}
\newcommand{\forcing}{g}
\newcommand{
\forcingexponent
}{p}
\newcommand{\generalfunctional}{\sF_{\norm,\affine,\forcing}}
\newcommand{\GMM}{{\rm GMM}}
\newcommand{\Holderexponent}{\beta}
\newcommand{\hull}[1]{\overline{co}\left(#1\right)}
\newcommand{\indexATW}{j}
\newcommand{\indexsATW}{j(s)}
\newcommand{\lowerbounddensity}{\vartheta}
\newcommand{\lowerbounddensitymc}{\vartheta}
\newcommand{\lowerbddensity}{\theta_1}
\newcommand{\naturalindex}{h}
\newcommand{\initialconvexset}{{\mathcal K}_0}
\newcommand{\initialconvexsetindex}{{\mathcal K}_{0 \naturalindex}}
\newcommand{\initialset}{E_0}
\newcommand{\Int}[1]{{\rm Int}({#1})}
\newcommand{\inverseofscale}{\scalefactor^{-1}}
\newcommand{\largers}{s''}
\newcommand{\loc}{\mathrm{loc}}
\newcommand{\meancurvature}{\kappa}
\newcommand{\meancurvatureofEt}{\kappa}
\newcommand{\MM}{{\rm MM}}
\newcommand{\modulusofcontinuity}{\omega}
\newcommand{\norm}{\varphi}
\newcommand{\normalvelocity}{v}
\newcommand{\normalvelocityofEt}{v}
\newcommand{\Om}{\Omega}
\newcommand{\p}{\partial}
\newcommand{\prescrmcf}{{\mathfrak p}}
\newcommand{\Pvdi}{\mathcal X}
\newcommand{\Qvdi}{\mathcal Y}
\newcommand{\Rn}{\mathbb R^n}
\newcommand{\scalefactor}{\lambda}
\newcommand{\sign}{\mathrm{sign}}
\newcommand{\shorthandgeneralfunctional}{\mathcal F}
\newcommand{\smallers}{s'}
\newcommand{\unitballnorm}{{B_\norm}}
\newcommand{\veloc}[3]{\affine\left(\tfrac{\mathrm{d}_{#1}}{#3}\right)}
\newcommand{\sveloc}[3]{\affine\left(\tfrac{\mathrm{sd}_{#1}}{#3}\right)}
\newcommand{\intpart}[1]{\left\lfloor #1 \right\rfloor}
\newcommand{\Wulff}{W}
\setlist[itemize]{leftmargin=7mm} %
\title[Generalized power mean curvature flow]{Minimizing movements for the generalized power mean curvature flow}
\author[G. Bellettini]{Giovanni Bellettini} 
\address[G.Bellettini]{University of Siena, Via Roma 56, 53100 Siena, Italy,
and International Centre for Theoretical Physics (ICTP), Strada Costiery 11, 34151 Trieste, Italy}
\email[G. Bellettini]{bellettini@dsiim.unisi.it}
\author[Sh. Kholmatov]{Shokhrukh Yu. Kholmatov} 
\address[Sh. Kholmatov]{University of Vienna, Oskar-Morgenstern Platz 1, 1090 Vienna, Austria}
\email[Sh. Kholmatov]{shokhrukh.kholmatov@univie.ac.at}
\subjclass[2020]{35K93, 53E10, 35D30, 49Q20}
\keywords{Affine curvature flow, power mean curvature flow, Almgren-Taylor-Wang functional, minimizing movements}
\date{\today}
\begin{document} 

\begin{abstract}
Motivated by a conjecture of De Giorgi, we consider the Almgren-Taylor-Wang scheme for mean curvature flow, where the volume penalization is replaced by a term of the form 
$$
\int_{E\Delta F}\veloc{F}{x}{\tau}~dx
$$
for $\affine$ ranging in a large class of strictly increasing
continuous functions. In particular, our analysis covers the case 
$$
\affine(
r
) = r^\DeGiorgiexponent, \qquad r \geq 0, \ \ \DeGiorgiexponent >0,
$$
considered by De Giorgi. We show that 
the generalized minimizing movement
scheme converges to the  geometric evolution equation
$$
\affine(\normalvelocityofEt) = - \meancurvatureofEt\quad \text{on $\p E(t)$,} 
$$
where $\{E(t)\}$ are evolving subsets of $\Rn,$ $\normalvelocityofEt$  is the normal velocity of $\p E(t),$ 
and $\meancurvatureofEt$ is the mean curvature of $\p E(t)$. We extend our analysis to the anisotropic setting, 
and in the presence of a driving force.
We also show that minimizing movements coincide with the 
smooth classical solution as long as the latter exists. 
Finally, we prove that in the absence of forcing,  
mean convexity and convexity are preserved by the weak flow. 
\end{abstract}

\maketitle

\section{Introduction}
Denote by $\vert \cdot\vert$
and by $\cH^{n-1}$  the Lebesgue measure and
the $(n-1)$-dimensional Hausdorff measure in $\Rn$, respectively,
and let $B_\rho(x)$ be the open ball centered at $x\in
\Rn$ of radius $\rho>0$.
In \cite{DeGiorgi:93} 
De Giorgi poses the following two conjectures\footnote{We have slightly
modified the notation with respect to the original conjectures.}, 
related to some results in \cite{ATW:1993,GH:1986,Huisken:1984}, and more generally, to mean curvature flow.

\begin{conjecture}\label{conj:3_1}
Let $\convexsets$ be the class of all open convex bounded 
subsets of $\Rn,$ endowed with the metric $d(\convex_1 , \convex_2) = \vert \convex_1 \Delta \convex_2 \vert.$ 
Given $\DeGiorgiexponent \in (0, +\infty)$ and $\initialconvexset
 \in \convexsets$, set
$$
\DeGiorgifunctional(\convex_2;\convex_1,\tau,k) =
\begin{cases}
\vert \convex_1\Delta \initialconvexset
\vert & \text{if $k\le 0,$}\\[2mm]
\displaystyle
\cH^{n-1}(\p 
\convex_2) + 
\frac{1}{\tau^
\DeGiorgiexponent
}\int_{\convex_2\Delta \convex_1}\d_{\convex_1}(x)^\DeGiorgiexponent ~dx & \text{if $k>0$},
\end{cases}
$$
where $\d_F(\cdot):=\dist(\cdot,\p F).$ Then there exists a unique minimizing 
movement $E(\cdot)$ in $\convexsets,$ associated to $\DeGiorgifunctional$ 
and starting from $\initialconvexset$ and, in the case $\DeGiorgiexponent
 = 1$, $\p E(t)$ 
moves along its mean curvature.
\end{conjecture}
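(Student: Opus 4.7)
The plan is to carry out the four standard steps of a minimizing-movements programme, tailored to the convex class $\convexsets$: existence of a minimizer at each discrete step, uniform estimates and compactness of the time-discrete interpolants, convergence to a limit flow as $\tau \downarrow 0$, and identification of the limit with the unique smooth geometric solution, which also yields uniqueness of the minimizing movement. For the one-step problem, fix $\convex_1 \in \convexsets$ and consider $\convex_2 \mapsto \cH^{n-1}(\p\convex_2) + \tau^{-\DeGiorgiexponent}\int_{\convex_2 \Delta \convex_1}\d_{\convex_1}(x)^\DeGiorgiexponent \, dx$. The penalty term confines competitors to a fixed ball, the perimeter of any minimizing sequence is uniformly bounded, and Blaschke's selection theorem yields a Hausdorff-convergent subsequence with convex limit; lower semicontinuity of perimeter together with dominated convergence on the volume term give that this limit is a minimizer. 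Crucially, any non-convex competitor can be replaced by its convex hull without increasing the perimeter and, since $\convex_1$ is convex, without increasing the symmetric-difference penalty, which is what makes the minimization over $\convexsets$ natural.

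Next I would test $\convex_k$ against $\convex_{k-1}$ to obtain the monotonicity $\cH^{n-1}(\p\convex_k) \le \cH^{n-1}(\p\convex_{k-1})$ and, by telescoping, a control of $\sum_k \tau^{-\DeGiorgiexponent}\int_{\convex_k \Delta \convex_{k-1}} \d^\DeGiorgiexponent_{\convex_{k-1}}\,dx$. An elementary H\"older-type inequality turns this displacement bound into time-uniform Hausdorff-distance estimates between consecutive minimizers, so that the piecewise-constant interpolants $E_\tau(\cdot)$ become precompact; along a subsequence one obtains a convex-valued limit flow $t \mapsto E(t) \in \convexsets$ with $E(0) = \initialconvexset$. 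The first variation of the single-step functional at $\convex_k$ reads, on $\p\convex_k$,
$$
\meancurvature_{\convex_k} + \tau^{-\DeGiorgiexponent}\sign(\sd_{\convex_{k-1}})\,\vert \sd_{\convex_{k-1}}\vert^\DeGiorgiexponent = 0,
$$
which is a one-step discretization of $\affine(\normalvelocity)=-\meancurvature$ with $\affine(r) = \sign(r)\vert r\vert^\DeGiorgiexponent$. To pin down the limit I would invoke Huisken's theorem (and its power-curvature analogues for $\DeGiorgiexponent \ne 1$): a smooth convex $\initialconvexset$ admits a unique smooth convex classical solution on a maximal time interval, shrinking to a point. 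Inserting inner and outer smooth barriers built from this classical solution into the discrete scheme and using the convexity-preservation of minimizers together with a comparison argument at the level of signed distances forces $E(\cdot)$ to coincide with the classical solution. Independence from the extracted subsequence gives uniqueness of the minimizing movement and, for $\DeGiorgiexponent = 1$, identifies $\p E(t)$ with the classical mean-curvature flow of $\initialconvexset$.

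The main obstacle is the identification step: mere $L^1$ or Hausdorff convergence is insufficient to pass the curvature term in the discrete Euler-Lagrange equation to the limit, so one needs uniform $C^{1,\Holderexponent}$ regularity and density lower bounds on $\p\convex_k$ independent of $\tau$. These come from the ATW-type regularity theory for the one-step problem, but must be checked to be stable under the convex constraint. Two further delicate points are: the case of non-smooth $\initialconvexset$, handled by approximating from inside and outside by smooth convex sets and using continuity of the scheme in the Hausdorff topology; and the regime $\DeGiorgiexponent < 1$, where the non-Lipschitz behaviour of $r \mapsto \sign(r)\vert r\vert^\DeGiorgiexponent$ near $r=0$ demands additional care at instants where the normal velocity vanishes.
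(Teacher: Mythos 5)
Your proposal claims the full conjecture, but its pivotal identification step does not go through, and in fact the paper itself only settles the conjecture for $\DeGiorgiexponent\in(0,1]$ (Corollary \ref{cor:de_giorgi_unique}), proving for general $\DeGiorgiexponent$ only existence of generalized minimizing movements in $\convexsets$ (Theorem \ref{teo:GMM_in_the_class_conv}) and uniqueness when the single-step minimization is performed in the unconstrained class $\cS^*$ (Theorem \ref{teo:gmm_convex_intro}); uniqueness in $\convexsets$ for $\DeGiorgiexponent>1$ is explicitly left open. The first concrete gap is your convexification claim: replacing a competitor $E$ by its closed convex hull does \emph{not} in general decrease the perimeter (two far-apart balls, or a thin curved tube in $\R^3$, have convex hulls of much larger perimeter), nor the penalty term when $E\not\subset \convex_1$. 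Hence you cannot conclude that minimizers constrained to $\convexsets$ are unconstrained minimizers, and this is exactly what your later steps need: for a minimizer subject to the convexity constraint the first variation is a nonlocal variational inequality (an obstacle-type condition), so the pointwise Euler--Lagrange identity $\meancurvature_{\convex_k}+\tau^{-\DeGiorgiexponent}\sign(\sd_{\convex_{k-1}})|\sd_{\convex_{k-1}}|^{\DeGiorgiexponent}=0$ on $\p\convex_k$, the ATW-type density/regularity estimates, and the discrete barrier comparison with the smooth Schulze flow are all unjustified for the constrained scheme. The paper states this obstruction explicitly at the start of Section \ref{sec:minimizing_movements_in_the_class_conv}. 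Your ``convexity-preservation of minimizers'' is likewise asserted, not proved; it is precisely the hard point.

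The way the paper closes this gap, and only for $\DeGiorgiexponent\in(0,1]$, is Proposition \ref{prop:fortunate}: when $\affine(r)=r^{\DeGiorgiexponent}$ is concave on $(0,+\infty)$, the minimal and maximal \emph{unconstrained} minimizers of $\shorthandgeneralfunctional(\cdot;\initialconvexset,\tau)$ are convex (via the Caselles--Chambolle reduction to a total-variation problem and the Alvarez--Lasry--Lions convexity theorem for the associated elliptic equation), so the constrained and unconstrained flat flows can be chosen to coincide; uniqueness then follows from the unconstrained theory (consistency with the smooth power mean curvature flow, comparison, and the rescaling argument of Theorem \ref{teo:gmm_convex_intro}), while for general $\DeGiorgiexponent$ only existence in $\convexsets$ is obtained, by monotonicity of the constrained flat flows, Helly selection for the perimeters, and convexity to upgrade compactness to $L^1$ convergence at every time. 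Your existence and compactness steps are close in spirit to that argument (monotone sets and perimeters, displacement bounds), but without a proof that constrained minimizers solve the unconstrained problem --- or some substitute argument valid for $\DeGiorgiexponent>1$ --- the uniqueness and the identification with the (power) mean curvature flow do not follow, and the case $\DeGiorgiexponent>1$ of the conjecture remains open.
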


\begin{conjecture}\label{conj:3_2}
Let
\begin{equation}\label{eq:def_S}
\cS:=\Big\{E\subset\Rn:\,\,\vert E\vert <+\infty,\,\,x\in E\,\,\Leftrightarrow \,\,\lim\limits_{\rho\to0^+}\rho^{-n}\vert 
B_\rho(x)\setminus E\vert=0\Big\},
\end{equation}
endowed
with the metric $d(E_1 , E_2 ) = \vert E_1\Delta  E_2\vert.$ 
Given $\DeGiorgiexponent \in (0, +\infty),$ $\initialset \in \cS$ such that $\cH^{n-1}(\p \initialset)<+\infty$ and $g\in L^1(\Rn)\cap L^n(\Rn),$ set 
$$
\widetilde \DeGiorgifunctional(E_2;E_1,\tau,k) =
\begin{cases}
\vert E_1\Delta \initialset\vert & \text{if $k\le0,$}\\[2mm]
\displaystyle
\cH^{n-1}(\p E_2) + \int_{E_2}g(x)~dx + \frac{1}{\tau^\DeGiorgiexponent}
\int_{E_2\Delta E_1}\d_{E_1}(x)^\DeGiorgiexponent ~dx  & \text{if $k>0.$}
\end{cases}
$$
Then there 
exists a generalized minimizing movement  in $\cS,$ associated to $\widetilde \DeGiorgifunctional$ and starting from $\initialset.$
\end{conjecture}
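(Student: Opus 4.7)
The plan is to follow the classical Almgren--Taylor--Wang strategy, adapted to the dissipation $\tau^{-\DeGiorgiexponent}\d_{E_1}^\DeGiorgiexponent$ and to the forcing $\forcing$. Fix $\tau > 0$ and set $E_0^\tau := \initialset$. Inductively, for $k \geq 1$, I would define $E_k^\tau$ as a minimizer of $E \mapsto \widetilde \DeGiorgifunctional(E;E_{k-1}^\tau,\tau,k)$ over all sets of finite perimeter in $\cS$, and then introduce the piecewise constant interpolant $E^\tau(t) := E_{\lfloor t/\tau\rfloor}^\tau$. A generalized minimizing movement starting from $\initialset$ will then be any $L^1_\loc(\Rn)$-limit point of $\{E^\tau(\cdot)\}$ along a subsequence $\tau_j \to 0^+$.

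\emph{Existence and uniform a priori bounds at each step.} Using $E_{k-1}^\tau$ as a competitor controls any minimizing sequence in perimeter. Since $\forcing \in L^n(\Rn)$, the isoperimetric inequality combined with H\"older's inequality gives $\big|\int_E \forcing\big| \leq \|\forcing\|_{L^n(\Rn)}|E|^{(n-1)/n} \leq C\|\forcing\|_{L^n(\Rn)}\cH^{n-1}(\p E)$, which absorbed into the perimeter term additionally yields a uniform bound on $|E|$. BV compactness gives $L^1$-convergence of a subsequence, while lower semicontinuity of the perimeter and Fatou's lemma on the two other integrals produce a minimizer, whose precise representative lies in $\cS$. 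Iterating the comparison with $E_{k-1}^\tau$ yields the discrete energy inequality
$$\cH^{n-1}(\p E_k^\tau) + \frac{1}{\tau^\DeGiorgiexponent}\sum_{j=1}^{k}\int_{E_j^\tau \Delta E_{j-1}^\tau}\d_{E_{j-1}^\tau}^\DeGiorgiexponent\,\d x \leq \cH^{n-1}(\p \initialset) + 2\|\forcing\|_{L^1(\Rn)},$$
uniformly in $\tau$ and $k$.

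\emph{Equicontinuity in time and passage to the limit.} The main obstacle is to derive a uniform modulus of continuity $|E^\tau(t)\Delta E^\tau(s)| \leq \modulusofcontinuity(|t-s|)$ with $\modulusofcontinuity(r)\to 0$ as $r\to 0^+$. For $\DeGiorgiexponent=1$ this is the original Almgren--Taylor--Wang estimate; for general $\DeGiorgiexponent>0$ I would slice $E_k^\tau\Delta E_{k-1}^\tau$ by the super-level sets $\{\d_{E_{k-1}^\tau}>s\}$ and combine the coarea/layer-cake identity with the perimeter bound to establish an interpolation inequality of the form
$$|E_k^\tau\Delta E_{k-1}^\tau|^{1+\DeGiorgiexponent/n} \leq C\,\big(\cH^{n-1}(\p E_k^\tau)+\cH^{n-1}(\p E_{k-1}^\tau)\big)^{\DeGiorgiexponent/n}\int_{E_k^\tau\Delta E_{k-1}^\tau}\d_{E_{k-1}^\tau}^\DeGiorgiexponent\,\d x.$$
Summing telescopically via a discrete H\"older inequality and using the dissipation bound should then yield time-H\"older continuity with exponent depending on $(n,\DeGiorgiexponent)$. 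Combined with the uniform perimeter bound, which at each $t$ gives $L^1_\loc$-compactness of $\{E^\tau(t)\}_\tau$, an Ascoli--Arzel\`a diagonal argument produces a subsequence $\tau_j \to 0^+$ and a limit $E\in C([0,+\infty); L^1_\loc(\Rn))$ with $E(0)=\initialset$. Finally, uniform density estimates for quasi-minimizers of the perimeter guarantee that the pointwise-in-time limit admits a representative satisfying the density characterization \eqref{eq:def_S}, hence belongs to $\cS$.
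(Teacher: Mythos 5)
The overall architecture (discrete minimization, telescoped energy inequality, compactness plus equicontinuity) is the same as the paper's, but the step you yourself identify as the main obstacle contains a genuine gap. Your interpolation inequality
\begin{equation*}
|E_k^\tau\Delta E_{k-1}^\tau|^{1+\DeGiorgiexponent/n} \le C\,\big(\cH^{n-1}(\p E_k^\tau)+\cH^{n-1}(\p E_{k-1}^\tau)\big)^{\DeGiorgiexponent/n}\int_{E_k^\tau\Delta E_{k-1}^\tau}\d_{E_{k-1}^\tau}^\DeGiorgiexponent\,dx
\end{equation*}
cannot follow from the layer-cake formula and the perimeter bound alone, because for a general set of finite perimeter the volume of the tube $\{\d_F\le s\}$ is \emph{not} controlled by $s\,\cH^{n-1}(\p F)$ (Minkowski content versus perimeter). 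In fact the displayed inequality is false as stated: take $E_{k-1}^\tau$ a single ball of radius $\epsilon$ and $E_k^\tau=\emptyset$; then the left-hand side is of order $\epsilon^{n+\DeGiorgiexponent}$ while the right-hand side is of order $\epsilon^{(n-1)\DeGiorgiexponent/n}\,\epsilon^{n+\DeGiorgiexponent}$, so the inequality fails for $\epsilon$ small. What saves the argument, and what the paper does, is to use that the sets $E_{k-1}^\tau$ ($k\ge2$) are themselves minimizers and hence satisfy uniform lower perimeter density estimates $P(E_{k-1}^\tau,B_r(x))\ge \lowerbddensity r^{n-1}$ for $x\in\p E_{k-1}^\tau$ and $r\le r_\tau$; a Vitali covering at a scale tied to the distance threshold then yields an additive volume--distance inequality of Almgren--Taylor--Wang type, and one must additionally check that the admissible scale $r_\tau$ is not too small relative to $\tau$ (for $\affine(r)=r^\DeGiorgiexponent$ one has $r_\tau\gtrsim\tau^{\DeGiorgiexponent/(1+\DeGiorgiexponent)}\gg\tau$), which is what produces the modulus $\modulusofcontinuity(s)\sim s^{\DeGiorgiexponent/(1+\DeGiorgiexponent)}$. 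Establishing the density estimates is itself nontrivial: it requires first the $L^\infty$ bound $\sup_{E_k^\tau\Delta E_{k-1}^\tau}\d_{E_{k-1}^\tau}\le 2\rho_\tau$, obtained by cut-and-fill comparisons with balls, and this is precisely where the hypothesis $\forcing\in L^n(\Rn)$ enters, through a Morrey-type estimate $\int_A|\forcing|\,dx\le c\,|A|^{(n-1)/n}$ for $|A|$ small. None of this appears in your outline; you invoke density estimates only to show that the limit lies in $\cS$, where they are in fact unnecessary (one simply takes the representative consisting of the points where the complement has vanishing density).

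Two smaller points. Coercivity of each discrete problem cannot be obtained by ``absorbing'' $\int_E\forcing\,dx$ into the perimeter via $|\int_E\forcing\,dx|\le C\|\forcing\|_{L^n}\cH^{n-1}(\p E)$, since nothing guarantees $C\|\forcing\|_{L^n}<1$; it suffices instead to bound $\int_E\forcing\,dx\ge -\|\forcing\|_{L^1(\Rn)}$, get the uniform perimeter bound, and then control $|E|$ by the isoperimetric inequality. Moreover, the first step $k=1$ has no density estimate available for $\initialset$, so the telescoping must start from $k\ge2$; correspondingly the uniform modulus of continuity holds for times $s,t>0$ (continuity up to $t=0$ requires extra information such as $|\p\initialset|=0$), which is still enough for the existence of a generalized minimizing movement since $E(\tau,0)=\initialset$ by definition.
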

 
Clearly, if $\forcing \equiv 0$, then 
$\DeGiorgifunctional
(\convex_2; \convex_1, \tau, k) 
 = \widetilde \DeGiorgifunctional(\convex_2; \convex_1, \tau, k) 
$ for $\convex_1, \convex_2
\in \convexsets$.

\smallskip

In this paper we prove Conjecture \ref{conj:3_2} in a more general form 
(see Theorem \ref{teo:existence_gmm}). We also establish weaker versions of Conjecture \ref{conj:3_1}: (a) minimizing 
$\widetilde
\DeGiorgifunctional$ in $\cS$ 
(rather than in $\convexsets$) 
we obtain a unique minimizing movement (Theorem \ref{teo:gmm_convex_intro});
(b) minimizing 
$\widetilde \DeGiorgifunctional$ in $\convexsets$ 
we obtain the existence of generalized minimizing movements (Theorem \ref{teo:GMM_in_the_class_conv}). Of course, these two results are related; 
in fact, for $\DeGiorgiexponent\in(0,1],$ using the convexity of the map 
$x\mapsto \dist(x,\p \convex)$ for convex sets
$\convex$ and the methods of \cite{CCh:2006}, we can show 
that there exists a 
unique minimizing movement in $\convexsets$ which is also the  unique minimizing movement in $\cS$ (Corollary \ref{cor:de_giorgi_unique}).  At the moment we miss the proof of the uniqueness of generalized minimizing movements for $\DeGiorgiexponent>1.$ 

The gradient flow of $\widetilde \DeGiorgifunctional$ in case $\forcing \equiv 0,$ leads to the equation 
\begin{equation}\label{true_affine_curvature_flow}
\normalvelocityofEt = -\meancurvatureofEt^{1/\DeGiorgiexponent},
\end{equation}
where $\normalvelocityofEt$ and $\meancurvatureofEt$ 
are the normal velocity and the mean curvature of 
an evolving family
$t \to \p E(t)$ of 
smooth closed hypersurfaces 
in $\Rn$;
\eqref{true_affine_curvature_flow} is 
called $\meancurvature^{1/\DeGiorgiexponent}$ (or power of mean curvature) flow \cite{Schulze:2005,Schulze:2006}, and is meaningful also 
in the case of nonconvex sets 
provided 
$\DeGiorgiexponent \in \mathbb N$ is odd  \cite{AST:1998}. 
When $\DeGiorgiexponent=1,$ the evolution equation \eqref{true_affine_curvature_flow} 
is the classical mean curvature flow. When $\DeGiorgiexponent=3$ and $\p E(t)$ are evolving curves 
in $\R^2$, \eqref{true_affine_curvature_flow} 
is called the \emph{affine curvature flow} 
(see e.g. \cite{AL:1986,AGLM:1993,Andrews:2003,
AST:1998,Chen:2015,
ChZh:2001,
Giga:2006,
ST:1994} and 
references therein) because of the invariance of the flow with respect to affine transformations of coordinates; this equation has 
applications in image processing \cite{AGLM:1993,Cao:2003_b}.
Depending on  $\DeGiorgiexponent$, various phenomena may occur. If $\DeGiorgiexponent<8,$ the only embedded homothetically shrinking 
solutions are circles, except when $\DeGiorgiexponent=3,$ where some self-shrinking ellipses occur, while if $\DeGiorgiexponent\ge 8,$ a new family of symmetric curves resembling either circles or polygons arise  (see e.g. \cite{AL:1986,Andrews:2003,ChZh:2001}).
The flow  \eqref{true_affine_curvature_flow} exists, for instance, 
in the case of bounded convex 
initial subsets of $\Rn$ \cite{Schulze:2005,Schulze:2006}, with 
finite-time extinction towards a point. See also \cite{DNV:2024}, 
for an asymptotic study  
of a time-fractional Allen-Cahn equation and its convergence to the power mean curvature flow.

A by-product of our analysis is the study of  a 
generalization of \eqref{true_affine_curvature_flow} of the form
\begin{equation}\label{affine_curvature_flow}
\affine(\normalvelocityofEt) =
- \anisotropicmeancurvatureofEt
- \forcing,
\end{equation}
where $\norm$ is an (even) anisotropy in $\Rn,$ 
$\anisotropicmeancurvatureofEt$ 
is the anisotropic mean curvature of $\p E(t),$ 
and 
$\affine: \R \to \R$ is a strictly increasing continuous odd surjective
function.
Clearly, in general we cannot expect invariance of solutions to \eqref{affine_curvature_flow}
with respect to affine transformations; we refer to \eqref{affine_curvature_flow} 
as the \emph{generalized power mean curvature flow}.

In this paper we study minimizing movement solutions 
corresponding to \eqref{affine_curvature_flow} under quite general assumptions on $\affine$ and $\forcing$. Following \cite{ATW:1993,ChDgM:2023,DeGiorgi:93} we introduce an Almgren-Taylor-Wang type functional  -- generalizing the functional\footnote{We slightly abuse the notation: 
in contrast to the functionals $\DeGiorgifunctional$,
$\widetilde \DeGiorgifunctional$ in Conjectures \ref{conj:3_1} and \ref{conj:3_2}, we do not highlight the dependence on $k$.} in Conjecture \ref{conj:3_2}:
$$
\generalfunctional
(E;F,\tau):=\anisotropicperimeter(E)+
\int_{E\Delta F}\veloc{F}{x}{\tau}~dx 
+ 
\int_{E}\forcing(x)~ dx
$$
with domain 
$$
\cS^*:= BV(\Rn;\{0,1\})\cap \cS,
$$
where $\tau>0$ and
\begin{equation}\label{eq:anisotropic_perimeter}
\anisotropicperimeter(E):=\int_{\p^*E}\dualnorm(\nu_E)~d\cH^{n-1}
\end{equation}
is the $\norm$-perimeter of $E.$ Here $\p^*E$ and $\nu_E$ are the reduced boundary and the generalized outer unit normal of $E.$  
When $\norm(\cdot) = \vert \cdot\vert$ is the Euclidean norm, we write $P$
in place of $\anisotropicperimeter$; localization of the 
perimeter in a Borel set $\Borelset$ is denoted by $P(\cdot, \Borelset)$.
When $\norm$ is Euclidean and $\affine(r)=r$  
we recover the standard Almgren-Taylor-Wang functional with a driving force \cite{ChDgM:2023}. 
In what follows, if no confusion is possible, 
\begin{equation}\label{eq:we_shorthand}
\textrm{we~ shorthand}~
\generalfunctional \textrm{~with~ the~ symbol~}
\shorthandgeneralfunctional.
\end{equation}
Note that when $\norm$ is Euclidean and $\forcing\equiv0,$ we have,
for $B_\varrho = \{\xi \in \Rn : \vert \xi\vert < \varrho\}$,
$$
\shorthandgeneralfunctional(B_r;B_{r_0},\tau) = n\omega_n r^{n-1} + n\omega_n\int_0^r \affine\Big(\tfrac{s-r_0}{\tau}\Big)\,s^{n-1}ds =:n\omega_n \ell(r)
$$
for any $r,r_0>0.$ Since $\ell$ is differentiable, 
$$
\ell'(r) = (n-1)r^{n-2} + r^{n-1} \affine\Big(\tfrac{r-r_0}{\tau}\Big),\quad r > 0.
$$
Thus, $r>0$ is a critical point if and only if 
\begin{equation}\label{masul_idoralar}
\frac{n-1}{r} +  \affine\Big(\tfrac{r-r_0}{\tau}\Big) = 0\quad \Longleftrightarrow\quad 
\frac{r - r_0}{\tau} = -\affine^{-1}\Big(\tfrac{n-1}{r} \Big).
\end{equation}
Using this in  Theorem \ref{teo:evolving_ball} we show that balls shrink self-similarly and their radii satisfy 
$$
R'(t) = -\affine^{-1}\Big(\tfrac{n-1}{R(t)}\Big)\quad {\rm if}~ R(t)>0,
$$
consistently with \eqref{affine_curvature_flow}. See Section 
\ref{sec:rescalings_of_GMM_and_comparison_with_balls} for more. 

The next definition is a particular 
case of a definition given in \cite{DeGiorgi:93}.
\begin{definition}[\textbf{Flat flows, $\GMM$ and $\MM$}]
A family $\{E(t)\}_{t\in[0,+\infty)}\subset \cS^*$ 
is called a \emph{generalized minimizing movement} (shortly, $\GMM$) in $\cS^*$, associated to 
$\shorthandgeneralfunctional$ and starting from $\initialset
\in S^*,$ if there exist 
a sequence $\tau_j\to0^+$ and a family $\{E(\tau_j,k)\}_{j,k\ge0}$ of sets, so-called flat flows, defined as 
$E(\tau_j,0) = \initialset
$ and 
$$
E(\tau_j,k)\in {\rm argmin}\,
\shorthandgeneralfunctional
(\cdot;E(\tau_j,k-1),\tau_j),\quad k,j\ge1,
$$
such that for any $t\ge0$
\begin{equation}\label{partenza_binorio9}
E(\tau_j,\intpart{t/\tau_j}) \to E(t) \quad \text{in $L_\loc^1(\Rn)$ as $j\to+\infty,$}
\end{equation}
where $\intpart{x}$ is the integer part of $x\in\R.$ When $E(\cdot)$ in \eqref{partenza_binorio9} is independent of the sequence $(\tau_j),$ i.e., the limit holds as $\tau\to0^+,$ then it 
is called a minimizing movement (shortly MM) in $\cS^*,$ 
associated to $\shorthandgeneralfunctional$ 
and starting from $\initialset.$ The set of $\GMM$ and $\MM$ in $\cS^*$ 
will be denoted, respectively, as 
$\GMM(\shorthandgeneralfunctional,\cS^*,\initialset)$ and $\MM(\shorthandgeneralfunctional,\cS^*,\initialset).$
\end{definition}


Now, we list a number of assumptions on $\affine$ and $\forcing$ needed 
in the sequel.
\bigskip 

\begin{theoremA}[(H)]\label{hyp:main}
$\,$%
\begin{itemize}
\item[\rm (Ha)] $\affine:\R\to\R$ 
is a strictly increasing, continuous, surjective 
odd function;
\item[\rm(Hb)] for any $\varsigma_0,\varsigma_1>0$ and any
$\tau>0,$ the unique solution $(\rho_\tau,r_\tau)$ of the system 
\begin{equation}\label{good_density_property}
\begin{cases}
\rho \affine\Big(\tfrac{\rho}{\tau}\Big) = \varsigma_0,\\
r \affine\Big(\tfrac{r+2\rho}{\tau}\Big) = \varsigma_1,\\
\rho,r>0,
\end{cases}
\end{equation}
satisfies\footnote{Note that $r_\tau$ depends on $\varsigma_0,$ $\varsigma_1$ and $\rho_\tau.$} 
\begin{equation}\label{lower_bound_r_tau}
\liminf\limits_{\tau\to0^+}\frac{r_\tau}{\tau}\in (0,+\infty];
\end{equation}

\item[\rm(Hc)] $\forcing\in L^1(\Rn)\cap L^\forcingexponent(\Rn)$ 
for some $\forcingexponent
\in [n, +\infty].$
\end{itemize}
\end{theoremA}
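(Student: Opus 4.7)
Strictly speaking this is Hypothesis (H), a list of standing assumptions, so there is no ``theorem'' here to prove. The only portion with substantive content to verify is the implicit claim in (Hb) that, under (Ha), the system \eqref{good_density_property} admits a \emph{unique} positive pair $(\rho_\tau,r_\tau)$; the liminf in \eqref{lower_bound_r_tau} is a further structural constraint on $\affine$, not a consequence. I would therefore organize the ``proof'' around the well-posedness of the system, and then sanity-check \eqref{lower_bound_r_tau} in the De Giorgi model case $\affine(r)=r^\alpha$.

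For $\rho_\tau$ I would use a single-variable monotonicity argument. Set $\Phi(\rho):=\rho\,\affine(\rho/\tau)$. By (Ha) the function $\affine$ is odd, continuous, strictly increasing and surjective on $\R$, so $\affine(0)=0$, $\affine(\rho/\tau)>0$ for $\rho>0$, and $\affine(\rho/\tau)\to+\infty$ as $\rho\to+\infty$. Thus $\Phi$ is continuous on $[0,+\infty)$, vanishes at $0$, grows to $+\infty$, and on $(0,\infty)$ is a product of two positive strictly increasing factors, hence strictly increasing. An intermediate value argument then gives a unique root $\rho_\tau\in(0,+\infty)$. For $r_\tau$ I would freeze $\rho=\rho_\tau$ and apply exactly the same reasoning to $r\mapsto r\,\affine((r+2\rho_\tau)/\tau)$ on $(0,\infty)$.

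To test that \eqref{lower_bound_r_tau} is a reasonable requirement, I would plug in the benchmark $\affine(r)=r^\alpha$. The first equation then yields $\rho_\tau=\varsigma_0^{1/(1+\alpha)}\,\tau^{\alpha/(1+\alpha)}$, so $\rho_\tau\to 0^+$ at scale $\tau^{\alpha/(1+\alpha)}$. Substituting into the second and using the ansatz $r_\tau=c\,\tau^\beta$, a short scaling balance (considering separately the regimes $r\gg\rho_\tau$ and $r\ll\rho_\tau$ and ruling them out) forces $\beta=\alpha/(1+\alpha)$; since $\beta<1$ for every $\alpha>0$, one gets $r_\tau/\tau=c\,\tau^{\beta-1}\to+\infty$ as $\tau\to 0^+$, so (Hb) is genuinely satisfied.

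The only point requiring a touch of care is the \emph{strict} monotonicity of $\Phi$ in the absence of any differentiability on $\affine$. I would establish it by the decomposition
\[
\Phi(\rho_2)-\Phi(\rho_1)=\rho_2\bigl(\affine(\rho_2/\tau)-\affine(\rho_1/\tau)\bigr)+(\rho_2-\rho_1)\,\affine(\rho_1/\tau)\quad\text{for }0<\rho_1<\rho_2,
\]
both summands being strictly positive by (Ha). Beyond this bookkeeping I do not foresee a real obstacle, and one sees that (Hb) is tailored precisely to axiomatize the scaling $r_\tau\gg\tau$ enjoyed by the power-law model.
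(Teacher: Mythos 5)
You are right that there is nothing to prove here: (H) is a standing hypothesis, and the paper offers no proof of it, only the remark that the strict monotonicity and surjectivity of $\affine$ are exactly what make \eqref{good_density_property} uniquely solvable, together with a verification (immediately after the statement, and more generally in Example \ref{ex:fisaffine}) that the power case satisfies \eqref{lower_bound_r_tau}. Your well-posedness argument -- $\rho\mapsto\rho\,\affine(\rho/\tau)$ is continuous, vanishes at $0$, is strictly increasing (your two-term decomposition correctly handles the absence of differentiability) and tends to $+\infty$ by surjectivity, then freeze $\rho=\rho_\tau$ and repeat in $r$ -- is precisely the argument the paper leaves implicit, so that part matches. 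For the power-law check the routes differ slightly: you posit the ansatz $r_\tau=c\,\tau^\beta$ and balance exponents, which as written is heuristic (nothing guarantees a priori that $r_\tau$ is a pure power of $\tau$), whereas the paper computes $\rho_\tau=\varsigma_0^{1/(1+\alpha)}\tau^{\alpha/(1+\alpha)}$ explicitly and bounds $r_\tau$ from below through a Young-type inequality applied to $r_\tau^{(1+\alpha)/\alpha}+2\rho_\tau r_\tau^{1/\alpha}=\varsigma_1^{1/\alpha}\tau$, obtaining $r_\tau\ge\varsigma_2\,\tau^{\alpha/(1+\alpha)}$ and hence $r_\tau/\tau\to+\infty$. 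Your own regime-splitting remark already repairs this: from the second equation, if $r_\tau\ge\rho_\tau$ then $\varsigma_1\tau^\alpha\le 3^\alpha r_\tau^{1+\alpha}$, and if $r_\tau\le\rho_\tau$ then $\varsigma_1\tau^\alpha\le 3^\alpha\varsigma_0^{\alpha/(1+\alpha)}r_\tau\,\tau^{\alpha^2/(1+\alpha)}$, and either case gives $r_\tau\gtrsim\tau^{\alpha/(1+\alpha)}$ without any ansatz, so the gap is cosmetic. Finally, your reading that \eqref{lower_bound_r_tau} is an independent structural constraint rather than a consequence of (Ha) is exactly the paper's: Example \ref{ex:fisaffine} extends the verification to any $\affine$ squeezed between two powers whose exponents differ by at most one, while Example \ref{ex:badexample} shows that exponential growth violates \eqref{lower_bound_r_tau}.
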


When 
$\DeGiorgiexponent=1,$ we have 
$\rho_\tau = \sqrt{\varsigma_0}\, \tau^{1/2}$ and $r_\tau = (\sqrt{\varsigma_0+4\varsigma_1}- \sqrt{\varsigma_0})\tau^{1/2}.$
More generally, when $r>0$ and 
$\affine(r)=r^\DeGiorgiexponent$ for some $\DeGiorgiexponent>0,$ 
we can explicitly find $\rho_\tau$ in $\eqref{good_density_property}:$ $\rho_\tau=\varsigma_0^\frac{1}{1+\DeGiorgiexponent}\tau^{\frac{\DeGiorgiexponent}{1+\DeGiorgiexponent}}.$ Moreover, $r_\tau>\varsigma_1^{1/\DeGiorgiexponent}\tau^{\frac{\DeGiorgiexponent}{1+\DeGiorgiexponent}}$ and by the H\"older inequality and the explicit expression of $\rho_\tau$
$$
\varsigma_1^{1/\DeGiorgiexponent}\tau = r_\tau^{\frac{1+\DeGiorgiexponent}{\DeGiorgiexponent}}+2\rho r_\tau^{1/\DeGiorgiexponent}\le  r_\tau^{\frac{1+\DeGiorgiexponent}{\DeGiorgiexponent}} + \tfrac{2\DeGiorgiexponent\epsilon}{1+\DeGiorgiexponent}\tau + \tfrac{1}{(1+\DeGiorgiexponent)\epsilon} r_\tau^{\frac{1+\DeGiorgiexponent}{\DeGiorgiexponent}}
$$
and thus, choosing $\epsilon>0$ small enough we find $r_\tau\ge \varsigma_2\tau^{\frac{\DeGiorgiexponent}{1+\DeGiorgiexponent}}$ for some $\varsigma_2>0.$ In particular $r_\tau$ satisfies \eqref{lower_bound_r_tau}.
See also Example \ref{ex:fisaffine}. 

Our first result is the following

\begin{theorem}[\textbf{Existence, time-continuity and bounds of $\GMM$}]
\label{teo:existence_gmm}
Assume Hypothesis \eqref{hyp:main} and use the notation
\eqref{eq:we_shorthand}. Then:

\begin{itemize}
\item[\rm(i)] For any $\initialset\in \cS^*,$ 
$\GMM(\shorthandgeneralfunctional,\cS^*,\initialset)$ is nonempty. 
Moreover, there exists a continuous strictly increasing function $\modulusofcontinuity:[0,+\infty)\to
[0,+\infty)$ 
(depending only on $\affine,$ $n,$ 
the constants $\constnorm$ and
$\Constnorm$ 
in \eqref{norm_bounds},  
$\anisotropicperimeter(\initialset)$ and $\|\forcing^-\|_{L^1(\Rn)}$) 
with $\modulusofcontinuity(0)=0$ 
such that for any $E(\cdot)\in 
\GMM(\shorthandgeneralfunctional,\cS^*,\initialset)$
\begin{equation}\label{unif_conto_GMM}
|E(t)\Delta E(s)| \le \modulusofcontinuity(|t-s|),\quad s,t>0.
\end{equation}
If, additionally, $|\p \initialset|=0,$ then  \eqref{unif_conto_GMM} holds for all $s,t\ge0.$ Furthermore, if 
\begin{equation*}
\exists \lim_{r\to+\infty} \frac{\affine(r)}{r^\DeGiorgiexponent} \in (0,+\infty)
\end{equation*}
for some $\DeGiorgiexponent>0,$ then $\modulusofcontinuity$ can be chosen locally  $\frac{\DeGiorgiexponent}{1+\DeGiorgiexponent}$-H\"older continuous.

\item[\rm(ii)] If $\initialset$ is bounded and 
\begin{equation}\label{forcing_linear_growth}
\forcing^-(x) \le \affine(\constantgminus(1+|x|))\quad \text{for all $|x|>\constantgminus$}
\end{equation}
for some sufficiently large constant $\constantgminus>0,$ then each $\GMM$ is locally bounded, 
i.e., for any $T>0$ there exists $R_T>0$ such that for any $E(\cdot)\in 
\GMM(\shorthandgeneralfunctional,\cS^*,\initialset)$
$$
E(t)\subset B_{R_T}(0)\quad\text{for all $t\in [0,T].$}
$$
\end{itemize}
\end{theorem}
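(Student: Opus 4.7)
The plan is to follow the classical Almgren-Taylor-Wang scheme, adapted to the nonlinear velocity $\affine(v)$. For (i), I would first show existence of a minimizer of $\shorthandgeneralfunctional(\cdot;F,\tau)$ for each $\tau>0$ and $F\in\cS^*$ by the direct method: coercivity follows from the nonnegativity of $\affine$ on $[0,+\infty)$ (from (Ha)), the lower bound $\int_E\forcing\ge-\|\forcing^-\|_{L^1}$ (from (Hc)) and BV-compactness; lower semicontinuity is standard for $\anisotropicperimeter$ in $L^1_\loc$, while the $\affine$-penalty is lower semicontinuous by Fatou applied to the nonnegative integrand $\affine(\d_F/\tau)\chi_{E\Delta F}$. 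Using $E(\tau,k-1)$ itself as a competitor and telescoping over $k$ yields the dissipation estimate
$$
\sup_{k\ge 0}\anisotropicperimeter(E(\tau,k))\,+\,\sum_{k\ge 1}\int_{E(\tau,k)\Delta E(\tau,k-1)}\affine\!\Big(\tfrac{\d_{E(\tau,k-1)}(x)}{\tau}\Big)\,dx\,\le\,\anisotropicperimeter(\initialset)+2\|\forcing^-\|_{L^1(\Rn)},
$$
which drives every subsequent bound.

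The core step is the quantitative $L^1_\loc$-equicontinuity in time. For any $\rho>0$ I would split
$$
|E(\tau,k)\Delta E(\tau,k-1)|\le|\{\d_{E(\tau,k-1)}<\rho\}|+\tfrac{1}{\affine(\rho/\tau)}\int_{E(\tau,k)\Delta E(\tau,k-1)}\affine\!\Big(\tfrac{\d_{E(\tau,k-1)}}{\tau}\Big)\,dx,
$$
and rely on a uniform tubular bound $|\{\d_E<\rho\}|\le C\rho\,\anisotropicperimeter(E)$ valid for minimizers $E$ when $\rho$ stays below the natural scale $\rho_\tau$ of \eqref{good_density_property}. This is exactly where (Hb) enters: comparing a minimizer with the competitor obtained by inserting or removing a ball $B_{r_\tau}(x)$ placed at distance $\rho_\tau$ from $F$ balances a perimeter change of order $\Constnorm r_\tau^{n-1}$ against an $\affine$-gain of order $r_\tau^n\affine((r_\tau+2\rho_\tau)/\tau)$, and \eqref{good_density_property}--\eqref{lower_bound_r_tau} are precisely what is needed for this comparison to yield density/regularity estimates down to $\tau\to 0^+$ with constants depending only on $n,\affine,\constnorm,\Constnorm$. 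Summing the splitting over $\intpart{s/\tau}<k\le\intpart{t/\tau}$ and optimizing $\rho\in(0,\rho_\tau]$ against $|t-s|$ and $\tau$ then produces a concave modulus $\modulusofcontinuity$, independent of $\tau$, such that $|E(\tau,\intpart{t/\tau})\Delta E(\tau,\intpart{s/\tau})|\le\modulusofcontinuity(|t-s|)$. In the power case $\affine(r)\sim r^\DeGiorgiexponent$, \eqref{good_density_property} gives $\rho_\tau\sim\tau^{\DeGiorgiexponent/(1+\DeGiorgiexponent)}$, and the same optimization yields the locally $\DeGiorgiexponent/(1+\DeGiorgiexponent)$-H\"older regularity. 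A standard Arzela-Ascoli/diagonal argument, together with the uniform $\anisotropicperimeter$ bound, then produces a sequence $\tau_j\to 0^+$ and a limit $E(\cdot)\in\GMM(\shorthandgeneralfunctional,\cS^*,\initialset)$ inheriting \eqref{unif_conto_GMM}.

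For (ii), I would argue by comparison with enclosing centered balls used as barriers. Theorem \ref{teo:evolving_ball} combined with \eqref{masul_idoralar} yields an ODE for the radius of a centered evolving ball which, after absorbing the forcing via \eqref{forcing_linear_growth}, becomes a differential inequality of the form $R'(t)\le\affine^{-1}(\constantgminus(1+R(t)))$ admitting global-in-time solutions. Picking $R_T:=R(T)$ for an initial radius $R(0)$ with $\initialset\subset B_{R(0)}(0)$ and applying the discrete comparison principle for the ATW scheme (replacing any competitor $E$ by $E\cap B_{R(k\tau)}(0)$ does not increase $\shorthandgeneralfunctional$, since cutting by a convex barrier decreases both the anisotropic perimeter and the $\affine$-penalty) inductively gives $E(\tau,k)\subset B_{R(k\tau)}(0)$ for every flat flow; the inclusion passes to any GMM limit.

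The main obstacle is the uniform tubular/density estimate at the scale $\rho_\tau$: this is the delicate step where the specific form of (Hb), together with the anisotropy constants $\constnorm,\Constnorm$ from \eqref{norm_bounds}, is essentially used, and where the precise dependence of $\modulusofcontinuity$ on $n,\affine,\constnorm,\Constnorm,\anisotropicperimeter(\initialset)$ and $\|\forcing^-\|_{L^1}$ gets fixed.
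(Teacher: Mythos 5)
Your plan for part (i) is essentially the paper's own argument (direct method, the dissipation estimate, uniform density estimates at the scales $\rho_\tau,r_\tau$ of \eqref{good_density_property}, the Chebyshev/covering splitting of $|E(\tau,k)\Delta E(\tau,k-1)|$ as in Lemma \ref{lem:volume_distance}, then summation, optimization of the parameter, and a diagonal argument), and it is sound as a plan; the only point to be careful about is that the tubular-type bound is available only at scales $\lesssim r_\tau$, so for fixed $|t-s|$ and $\tau\to0^+$ the optimal cutoff exceeds $r_\tau$ and one must cover with balls of radius $r_\tau$, which is exactly where \eqref{lower_bound_r_tau} (boundedness of $\tau/r_\tau$) is used — not in the density estimates themselves.

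Part (ii), however, has a genuine gap. First, the assertion that replacing a competitor $E$ by $E\cap B_{R(k\tau)}$ ``does not increase $\shorthandgeneralfunctional$'' is false as stated: truncation does not increase $\anisotropicperimeter$ nor the distance penalty, but it removes the forcing contribution $\int_{E\setminus B_{R(k\tau)}}\forcing\,dx$, which can be negative (indeed \eqref{forcing_linear_growth} allows $\forcing^-$ to be unbounded at infinity), so cutting may increase the energy. The whole content of the paper's Lemma \ref{lem:bounded_wuldf} is to choose the new radius so that the distance-term gain $\affine\big(\tfrac{R_k-R_{k-1}}{\Constnorm\tau}\big)$ dominates the worst-case forcing loss $\affine\big(\constantgminus+\tfrac{\constantgminus}{\constnorm}R_k\big)$; by monotonicity of $\affine$ the two $\affine$'s cancel and one obtains the recursion $R_k\le(1+C\tau)R_{k-1}+C\tau$, hence $R(t)\lesssim e^{Ct}$. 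Second, and for the same reason, your differential inequality $R'(t)\le\affine^{-1}(\constantgminus(1+R(t)))$ is not what the comparison yields: since both the velocity penalty and the bound on $\forcing^-$ in \eqref{forcing_linear_growth} are expressed through $\affine$, the correct inequality is the linear one $R'\le C(1+R)$. As written, your inequality need not admit global-in-time solutions at all — e.g.\ for $\affine(r)=r^{1/2}$ one gets $R'\le C(1+R)^2$, which blows up in finite time — so the claim that it is globally solvable is unjustified and the argument would fail to produce $R_T$ for every $T$. Finally, Theorem \ref{teo:evolving_ball} cannot serve as the source of the barrier ODE here: it is proved only for the Euclidean anisotropy with $\forcing\equiv0$ and describes the GMM of a shrinking ball, whereas what is needed is a discrete comparison at the level of single minimizers (the paper performs it directly by energy comparison with Wulff shapes).
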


Some comments are in order.

\begin{itemize}
\item The oddness of $\affine$ allows 
to reduce $\shorthandgeneralfunctional$ to a prescribed mean 
curvature functional: when
$\affine$ is odd,  we can write
\begin{equation}\label{ofortoo}
\int_{E\Delta F} \veloc{F}{x}{\tau}~dx = \int_E \affine\Big(\tfrac{\sd_F}{\tau}\Big)~dx - \int_F\affine\Big(\tfrac{\sd_F}{\tau}\Big)~dx
\end{equation}
whenever $E\cap F$ has finite measure,
and $\sd_F$ is the signed distance from $\partial F$
{\it positive outside} $F$.
  Indeed, one can readily check that $\chi_G\sd_G\in L^1(\Rn)$ for any $G$ with $|G|<+\infty.$ Thus, 
\begin{align*}
\int_E \affine\Big(\tfrac{\sd_F }{\tau}\Big)~dx - \int_F\affine\Big(\tfrac{\sd_F }{\tau}\Big)~dx =  &
\int_{E\setminus F} \affine\Big(\tfrac{\sd_F }{\tau}\Big)~dx - \int_{F\setminus E}\affine\Big(\tfrac{\sd_F }{\tau}\Big)~dx \\
= & \int_{E\setminus F} \affine\Big(\tfrac{\d_F }{\tau}\Big)~dx + \int_{F\setminus E}\affine\Big(\tfrac{\d_F }{\tau}\Big)~dx. 
\end{align*} 
Therefore, similarly to the classical Almgren-Taylor-Wang functional,
\begin{equation}\label{eq:prescur_funco}
\shorthandgeneralfunctional(E;F,\tau)= 
\anisotropicperimeter(E)+\int_{E}\sveloc{F}{x}{\tau}~dx + \int_{E}\forcing(x) ~dx + 
\constantprescribed,
\end{equation}
where $\constantprescribed$ is a constant independent of $E$.
In view of \eqref{eq:prescur_funco}, the minimization 
problem for $\shorthandgeneralfunctional(\cdot; F,\tau)$ 
is equivalent to the minimization of the  prescribed mean curvature functional $E\in\cS^*\mapsto \anisotropicperimeter(E) + \int_E
h_{\affine,F,\forcing}~dx,$ with
\begin{equation}\label{eq:h} 
h_{\affine,F,\forcing}:=\affine\Big(\tfrac{\sd_F}{\tau}\Big) + \forcing.
\end{equation}

\item (\ref{hyp:main}a) and (\ref{hyp:main}c) suffice for the 
well-definiteness and the $L_\loc^1(\Rn)$-lower semicontinuity of 
$\shorthandgeneralfunctional$, as well as for the existence of 
minimizers, and in particular, of flat flows (Lemma
\ref{lem:existence_minima}).

\item Assumption $\forcing\in L^\forcingexponent(\Rn)$ in (\ref{hyp:main}c) is 
used to establish the uniform density estimate
\begin{equation}\label{uniform_per_density_estos}
\frac{P(E,B_r(x))}{r^{n-1}}
\ge
\lowerbddensity,\quad x\in \p E,\quad r\in (0,r_\tau], 
\end{equation}
for minimizers $E$ of $\shorthandgeneralfunctional(\cdot;F,\tau),$ 
where $\lowerbddensity>0$ is independent of $E,F$ and $\tau$ (see 
Sections \ref{subsec:L_infty_bound_for_d_F},  \ref{subsec:density_estimates} and the inequality \eqref{eq:lower_perimeter_density_estimate}).

\item Assumption \eqref{lower_bound_r_tau} together with \eqref{uniform_per_density_estos} and Lemma \ref{lem:volume_distance} applied with $\ell=\tau$ and $p>0,$ imply that any flat flow $\{E(\tau,k)\}$ satisfies 
$$
|E(\tau,k)\Delta E(\tau,k-1)| \le Cp^\sigma \tau \anisotropicperimeter(E(\tau,k-1)) + \frac{1}{\affine(p)}\int_{E(\tau,k)\Delta E(\tau,k-1)} \veloc{E(\tau,k-1)}{x}{\tau}~dx
$$
for all $k\ge2$ and for some $\sigma\in\{1,n\},$ where $C$ is a coefficient depending only on $n,$ $\norm,$ $\theta$ and the liminf of $r_\tau/\tau$ in \eqref{lower_bound_r_tau}. This estimate, with a suitable choice of $p$ (see \eqref{suitable_p}) yields the almost uniform time-continuity of the flat flow $t\mapsto E(\tau,\intpart{t/\tau})$ (see \eqref{disc_holtime}), which in turn implies the existence of $\GMM$ and 
the validity of \eqref{unif_conto_GMM}.

\item If $\affine(r)=r^\DeGiorgiexponent$ for $r>0$ and 
for some $\DeGiorgiexponent>0,$ then $\modulusofcontinuity$ in \eqref{unif_conto_GMM} can be chosen as $\modulusofcontinuity(t)=t^{\frac{\DeGiorgiexponent}{1+\DeGiorgiexponent}},$ see also \eqref{def:unif_conto}. 

\item The strict  monotonicity and surjectivity
of $\affine$ are needed for the unique solvability of system \eqref{good_density_property}. At the moment, 
we do not know what happens if these assumptions are dropped.

\item Without assumption \eqref{lower_bound_r_tau} our method fails to apply for the existence of $\GMM$ (see Remark \ref{rem:bad_upper_bound}). However, we do not know whether $\GMM$ exists or not 
without this assumption.

\item
If 
$\affine(r) \sim r^\DeGiorgiexponent$ as $r\to+\infty$ for some $\DeGiorgiexponent>0,$ 
then $\affine$ satisfies (\ref{hyp:main}b), whereas if $\affine$ has exponential growth, then \eqref{lower_bound_r_tau} may fail (see Examples \ref{ex:fisaffine} and \ref{ex:badexample}). 
\end{itemize}

To prove the existence and uniform time continuity of 
$\GMM$ in Theorem \ref{teo:existence_gmm} we follow the 
standard Almgren-Taylor-Wang method in \cite{ATW:1993,ChDgM:2023,LS:1995}.
However, the presence of $\affine$ requires some extra care in techniques. 
Furthermore, unlike these papers, for the existence of $\GMM$ we do not assume a priori the boundedness of the initial sets and also of 
$\forcing$. 
Finally, the uniform boundedness of $\GMM$ 
will be done employing the 
isoperimetric properties of (anisotropic) 
balls. This can be also done following \cite[Lemma 3.9]{ChDgM:2023}, where a time-dependent bounded $\forcing$ is considered. 

It is well-known \cite[Theorem 12]{BCChN:2005} 
that the classical mean curvature flow preserves 
mean convexity;
our next main qualitative result  is a similar preservation
(Section \ref{sec:evolution_of_mean_convex_sets}), which to some extent generalizes \cite{ChN:2022}.

\begin{theorem}[\textbf{Mean convex evolution}]\label{teo:meanconvex_intro}
Suppose that $\affine$ satisfies (\ref{hyp:main}a), (\ref{hyp:main}b) and $g\equiv0$.
Then  any $E(\cdot) \in \GMM(\shorthandgeneralfunctional,\cS^*,\initialset)$ starting from a bounded $\delta$-mean convex set $\initialset\Subset\Omega$ (in some open set $\Omega$) is itself a flow of $\delta$-mean convex sets in $\Omega.$ Moreover, the maps $t\mapsto E(t)$ and $t\mapsto \anisotropicperimeter(E(t))$ are nonincreasing.
\end{theorem}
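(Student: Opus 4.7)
The plan is to work at the discrete level, establishing the claims for every flat flow $\{E(\tau_j, k)\}_{k\ge 0}$ with $E(\tau_j, 0) = \initialset$, and then pass to the limit using $L^1_\loc$-convergence. Set $E := E(\tau_j, k)$ and $F := E(\tau_j, k-1)$. By induction on $k$, I would prove: (a) $E \subset F$; (b) $E$ is $\delta$-mean convex in $\Omega$; (c) $\anisotropicperimeter(E) \le \anisotropicperimeter(F)$. Item (c) follows immediately from minimality, by testing with $F$ itself as competitor: $\shorthandgeneralfunctional(E;F,\tau_j) \le \shorthandgeneralfunctional(F;F,\tau_j) = \anisotropicperimeter(F)$, since the volume penalty at $E$ is nonnegative by (\ref{hyp:main}a) (recall $\affine$ is odd, strictly increasing, with $\affine(0)=0$).

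For the monotonicity (a), I would test the minimality of $E$ against the inward competitor $E \cap F$ to obtain
\[
\anisotropicperimeter(E) + \int_{E\setminus F}\affine(\d_F/\tau_j)\,dx \le \anisotropicperimeter(E\cap F),
\]
and simultaneously test the $\delta$-mean convexity of $F$ against the outward perturbation $F \cup E$:
\[
\anisotropicperimeter(F) + \delta\,|E\setminus F| \le \anisotropicperimeter(F\cup E).
\]
Adding these and invoking submodularity of $\anisotropicperimeter$, the contributions $\anisotropicperimeter(E)$ and $\anisotropicperimeter(F)$ cancel, leaving
\[
\int_{E\setminus F}\affine(\d_F/\tau_j)\,dx + \delta\,|E\setminus F| \le 0,
\]
whence $|E\setminus F|=0$, because both terms are nonnegative and the integrand is strictly positive where $\d_F > 0$.

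Item (b), the preservation of $\delta$-mean convexity, is the main obstacle. Given any competitor $G\supset E$ with $G\setminus E \Subset \Omega$, the argument must combine three ingredients: the minimality of $E$ tested against a carefully chosen competitor constructed from $G$, $F$ and their intersections/unions (for instance $G\cap F$, which lies between $E$ and $F$); the $\delta$-mean convexity of $F$ tested against $F\cup G$; and the submodularity of $\anisotropicperimeter$. Using \eqref{eq:prescur_funco} to rewrite $\shorthandgeneralfunctional(\cdot;F,\tau_j)$ as a prescribed mean curvature functional with datum $h=\affine(\sd_F/\tau_j)$ gives this datum a clean sign (nonpositive inside $F$, nonnegative outside); tracking these signs through the algebraic combination allows the $\delta$-constant from the variational inequality for $F$ to be transferred to the analogous inequality for $E$ without deterioration, while the volume-distance estimates associated with (\ref{hyp:main}b) control any residual penalty terms that appear in the combination.

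To conclude, pass to the limit $\tau_j \to 0^+$. The discrete nesting $E(\tau_j, k) \subset E(\tau_j, k-1)$ is preserved under $L^1_\loc$-convergence and yields $E(t_2)\subset E(t_1)$ for $t_2\ge t_1$. Lower semicontinuity of $\anisotropicperimeter$ together with the discrete perimeter decay implies $\anisotropicperimeter(E(t_2))\le \anisotropicperimeter(E(t_1))$. Finally, the variational inequality defining $\delta$-mean convexity is, for each fixed admissible competitor $G$, stable under $L^1_\loc$-perturbations of the set (lower semicontinuity handles the perimeter terms and continuity handles the Lebesgue measure term), so $\delta$-mean convexity survives the passage to the limit, giving the claim for $\{E(t)\}_{t\ge 0}$.
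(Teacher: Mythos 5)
Your steps (a) and (c) at the discrete level are fine and match the paper's Lemma \ref{lem:inclusion_of_minimizers}(i), but the heart of the theorem is exactly the step you leave as a sketch, and the mechanism you propose for it does not work. The one-step combination ``minimality of $E$ against a competitor built from $G$ and $F$, plus $\delta$-mean convexity of $F$, plus submodularity'' is precisely how the paper proves Lemma \ref{lem:inclusion_of_minimizers}(iii), and it only yields \emph{mean} convexity ($\delta=0$) of the minimizer: if $G\setminus E$ sits strictly inside $F$, the term $\delta|G\setminus F|$ coming from the $\delta$-mean convexity of $F$ vanishes, and the volume datum $\affine(\d_F/\tau)$ is \emph{not} bounded below by $\delta$ on the part of $G\cap(F\setminus E)$ close to $\p F$, so no $\delta$ is transferred there. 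The missing ingredient is the quantitative inward-displacement estimate \eqref{massage_massage}, $\sd_{E_\tau}\ge \sd_F+\affine^{-1}(\delta)\tau$, proved by a translation argument (using $\forcing\equiv0$ and the strict monotonicity of $\affine$), which guarantees $\affine(\d_F/\tau)\ge\delta$ \emph{on the minimizer}. Even with this, a single comparison between $E$ and $F$ is not enough, because the problematic region $G\cap(F\setminus E)$ still reaches up to $\p F$; the paper's Proposition \ref{prop:mean_convexity_of_minimal_and_maximal_minimizers} instead introduces the whole family $E_s$ of minimizers for time steps $s\in(0,\tau]$, proves monotonicity and continuity in $s$, and telescopes the minimality inequalities over a fine partition $\underline s=s_0<\dots<s_N=\overline s$, gaining $(\delta-\epsilon)|G\cap(E_{s_{i-1}}\setminus E_{s_i})|$ at each step because $\affine(\d_{\initialset}/s_{i-1})\ge\delta$ on $E_{s_{i-1}}$; then one lets $\underline s\to0$, $\overline s\to\tau$, treats the minimal/maximal minimizers, and finally all minimizers (Corollary \ref{cor:mean_convexity_of_mininizers}). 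Your appeal to the volume-distance estimates of (\ref{hyp:main}b) to ``control residual penalty terms'' is not the relevant tool here and does not replace this construction.

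Two smaller points. First, your limit argument for the perimeter monotonicity has a gap: lower semicontinuity gives $\anisotropicperimeter(E(t_2))\le\liminf_j\anisotropicperimeter(E(\tau_j,\intpart{t_1/\tau_j}))$, but this does not compare with $\anisotropicperimeter(E(t_1))$, since the perimeter may drop strictly in the $L^1$-limit; the paper instead deduces the monotonicity of $t\mapsto\anisotropicperimeter(E(t))$ from the (already established) mean convexity of $E(t_2)$ tested against the competitor $E(t_1)\supset E(t_2)$. Second, the stability of $\delta$-mean convexity under $L^1$-convergence (Proposition \ref{prop:mean_convex_propert}(d)) is cleanest via the characterization $\anisotropicperimeter(E\cap F)\le\anisotropicperimeter(F)-\delta|F\setminus E|$, since a competitor of the limit set need not contain the approximating sets; as phrased, your passage to the limit glosses over this.
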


To prove this theorem we partially follow the ideas of \cite{ChN:2022}; 
in order to show the $\delta$-mean convexity of minimizers of $\shorthandgeneralfunctional,$ the authors of \cite{ChN:2022} used the so-called Chambolle scheme for mean  curvature flow and the Anzelotti-pairings, while here we employ comparison properties for the prescribed mean curvature functional. 
Classical mean curvature flow
even preserves convexity  \cite{Huisken:1984}.
Also,
 the $\GMM$ starting from a convex set is unique (which positively answers to the last assertion of 
Conjecture \ref{conj:3_1} when $\DeGiorgiexponent=1$), see \cite{BCChN:2005}. The following result 
shows the validity of this property also in the 
generalized power mean curvature flow setting 
(see Section \ref{sec:evolution_of_bounded_convex_sets}). 

\begin{theorem}[\textbf{Evolution of convex sets and stability}]
\label{teo:gmm_convex_intro}
Let $\norm$ be the Euclidean norm, $\forcing \equiv 0$
and $f(r) = r^\DeGiorgiexponent$ for $r>0$, with $\DeGiorgiexponent >0$.
Let $\initialconvexset \in \convexsets$. 
Then:
\begin{itemize}
\item[\rm(i)]
$\GMM(\shorthandgeneralfunctional,\cS^*,\initialconvexset)= 
\MM(\shorthandgeneralfunctional,\cS^*,\initialconvexset) = \{\convex(\cdot)\}$ 
and $\convex(t)$ is convex for any $t\ge0.$ 
Moreover, if $\initialconvexset$ is smooth, then $\convex(\cdot)$ 
is the smooth convex power mean 
curvature evolution starting from $\initialconvexset$ 
(see Theorem \ref{teo:convex_smooth_evol});

\item[\rm(ii)] Let $\initialconvexset \in \convexsets$ and let 
$(\initialconvexsetindex)$ 
be any sequence of sets such that $\p 
\initialconvexsetindex\to \p \initialconvexset$ in the Kuratowski sense and $\{P(\initialconvexsetindex)\}$ is uniformly bounded. Let $\convex_\naturalindex(\cdot)
\in \GMM(\shorthandgeneralfunctional,\cS^*,\initialconvexsetindex),$ 
and let $\{\convex(\cdot)\}$ 
be the unique minimizing movement starting from $\initialconvexset$. Then 
$$
\lim\limits_{\naturalindex\to+\infty}|\convex_\naturalindex(t)\Delta \convex(t)| = 
0\quad\text{for any $t\ge0.$}
$$
\end{itemize}
\end{theorem}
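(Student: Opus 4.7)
The cornerstone is preservation of convexity at the single minimization step: if $F \in \convexsets$ and $E$ minimizes $\shorthandgeneralfunctional(\cdot; F, \tau)$, then $E$ is convex. Via \eqref{eq:prescur_funco} this is equivalent to minimizing $P(E) + \int_E \affine(\sd_F/\tau)\,dx$. Since $F$ is convex, every sublevel set $\{\sd_F \le t\}$ is convex (as an outer or inner parallel body of $F$), so the weight $\affine(\sd_F/\tau)$ is quasiconvex. I would establish convexity of the minimizer by a smoothing scheme: approximate $F$ by smooth strongly convex $F_\varepsilon \to F$, for which the Euler--Lagrange equation $\meancurvature_{\p E_\varepsilon} = -\affine(\sd_{F_\varepsilon}/\tau)$ admits a smooth strictly convex solution (the weight having smooth strictly convex superlevel sets, this is a stationary-problem analogue of Schulze's existence result for convex power mean curvature flow), then pass $\varepsilon \to 0$ by lower semicontinuity of $\generalfunctional$ and $L^1_\loc$ compactness. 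Iterating at each discrete time step yields that every flat flow from $\initialconvexset \in \convexsets$ is a sequence of convex sets.

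\textbf{From convexity to uniqueness and the smooth flow.} Once convexity is preserved, I would prove monotone comparison: if $F_1 \subseteq F_2$ are convex with minimizers $E_1,E_2$ of $\shorthandgeneralfunctional(\cdot;F_i,\tau)$, then $E_1 \subseteq E_2$; this follows by inserting $E_1 \cap E_2$ and $E_1 \cup E_2$ as competitors, using $\sd_{F_1} \ge \sd_{F_2}$ pointwise together with the strict monotonicity of $\affine$. Monotone comparison forces uniqueness of the single-step minimizer (by trapping between comparable convex data), hence uniqueness of the flat flow, and after passing to limits $\GMM(\shorthandgeneralfunctional,\cS^*,\initialconvexset) = \MM(\shorthandgeneralfunctional,\cS^*,\initialconvexset) = \{\convex(\cdot)\}$. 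Convexity of $\convex(t)$ persists because $L^1$-limits of convex sets are convex. When $\initialconvexset$ is smooth, Schulze's smooth convex power mean curvature evolution $\tilde\convex(\cdot)$ exists on a maximal interval $[0,T^*)$ with extinction at $T^*$; the usual Almgren--Taylor--Wang consistency argument (using inner and outer smooth barriers $\tilde\convex(t \pm \varepsilon)$ against the flat flow, justified by the regularity of $\tilde\convex$ up to $T^*$) identifies $\convex(t) = \tilde\convex(t)$ on $[0,T^*]$, while $\convex(t) = \emptyset$ thereafter.

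\textbf{Plan for (ii).} For convex $\initialconvexsetindex$ with $\sup_\naturalindex P(\initialconvexsetindex) < +\infty$, Kuratowski convergence of $\p \initialconvexsetindex$ together with the uniform perimeter bound yields $|\initialconvexsetindex \Delta \initialconvexset| \to 0$ and the uniform inclusion $\initialconvexsetindex \subset B_{R_0}$ for some $R_0$. Theorem \ref{teo:existence_gmm}(i) supplies a modulus $\modulusofcontinuity$ that is uniform in $\naturalindex$ (depending only on $\affine$, $n$, and $\sup_\naturalindex P(\initialconvexsetindex)$), while part (ii) of the same theorem (applicable since $\forcing \equiv 0$ trivially satisfies \eqref{forcing_linear_growth}) gives a uniform bound $\convex_\naturalindex(t) \subset B_{R_T}$ on $[0,T]$. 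Ascoli--Arzel\`a extracts a subsequence converging in $C([0,T]; L^1_\loc(\Rn))$ to some $\convex_\infty(\cdot)$, and the standard minimality-plus-lower-semicontinuity argument shows $\convex_\infty \in \GMM(\shorthandgeneralfunctional,\cS^*,\initialconvexset)$; by (i) we have $\convex_\infty = \convex(\cdot)$, and uniqueness of the cluster point promotes the subsequential limit to the full sequence.

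\textbf{Main obstacle.} The delicate step is convexity preservation at each discrete minimization. For $\DeGiorgiexponent \in (0,1]$ the Caselles--Chambolle obstacle-problem characterization of \cite{CCh:2006} exploits the convexity of $x \mapsto \dist(x,\p \convex)$ on convex $\convex$; once $\DeGiorgiexponent > 1$ this characterization fails and one is forced to rely on quasiconvexity of the weight plus Schulze's smooth theory. The subtle point in the smoothing scheme is that $\affine^{-1}$ may degenerate near zero when $\DeGiorgiexponent$ is large, which weakens uniform-in-$\varepsilon$ control on the approximate minimizers $E_\varepsilon$ near $\p F_\varepsilon$; ensuring convexity is inherited in the limit $\varepsilon \to 0$ without a uniform modulus of ellipticity of the Euler--Lagrange equation is the heart of the argument.
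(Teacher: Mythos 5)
Your plan for (i) hinges on a step the paper itself cannot establish and which your sketch does not actually prove: one-step convexity preservation, i.e.\ that a minimizer of $\shorthandgeneralfunctional(\cdot;F,\tau)$ with $F$ convex is convex, for \emph{every} $\DeGiorgiexponent>0$. Quasiconvexity of the weight $\affine(\sd_F/\tau)$ (convex sublevel sets of $\sd_F$) is far from sufficient for convexity of minimizers of the prescribed-curvature functional; the known mechanism (used in Proposition \ref{prop:fortunate} via \cite{CCh:2006} and the Alvarez--Lasry--Lions convexity theorem) needs \emph{concavity} of $\affine(\d_{\initialconvexset}/\tau)$ inside $\initialconvexset$, which holds only for $\affine$ concave, i.e.\ $\DeGiorgiexponent\in(0,1]$ -- this is exactly why the paper restricts Corollary \ref{cor:de_giorgi_unique} and the uniqueness part of Theorem \ref{teo:GMM_in_the_class_conv} to $\DeGiorgiexponent\le 1$. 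Your proposed ``stationary analogue of Schulze'' via smoothing is not an available result, and you flag it yourself as the heart of the argument without supplying it. A second gap: ``monotone comparison forces uniqueness of the single-step minimizer'' is unjustified -- comparison with $F_1=F_2$ only orders the minimal and maximal minimizers (Theorem \ref{teo:compare}, Corollary \ref{cor:minmaxprinciple}) and gives no uniqueness, so even granting discrete convexity your route to $\GMM=\MM=\{\convex(\cdot)\}$ is incomplete. The paper's proof of (i) avoids discrete convexity altogether: it uses the power-law scaling property (Corollary \ref{cor:power_law_case}) to sandwich the initial datum as $\scalefactor\initialconvexset\Subset P_0\Subset\initialconvexset\Subset Q_0\Subset\scalefactor^{-1}\initialconvexset$ with $P_0,Q_0$ smooth and convex, invokes Schulze's theorem plus the consistency Theorem \ref{teo:consistency} to know the GMMs from $P_0,Q_0$ are unique, and then the strong comparison principle (Corollary \ref{cor:strong_comparison}) traps any two candidate GMMs from $\initialconvexset$ between $\scalefactor\convex(\scalefactor^{-1/\DeGiorgiexponent}t)$ and $\scalefactor^{-1}\convex(\scalefactor^{1/\DeGiorgiexponent}t)$, forcing them to coincide as $\scalefactor\to1$; convexity of $\convex(t)$ comes from this squeezing, not from convexity of discrete minimizers.

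In (ii) there is a further gap: after Ascoli--Arzel\`a you assert that the limit of $\convex_\naturalindex(\cdot)$ is a GMM starting from $\initialconvexset$ ``by the standard minimality-plus-lower-semicontinuity argument''. This is not standard: the flat flows generating $\convex_\naturalindex$ start from $\initialconvexsetindex$, not from $\initialconvexset$, and passing the minimizing scheme to the limit in the initial datum is a nontrivial stability statement (minimizers are not unique, so a diagonal argument does not go through directly). The paper sidesteps this by again using comparison plus scaling: Kuratowski convergence gives $\scalefactor\initialconvexset\Subset\initialconvexsetindex\Subset\scalefactor^{-1}\initialconvexset$ for large $\naturalindex$, whence $\scalefactor\convex(t\scalefactor^{-1/\DeGiorgiexponent})\subset\convex_\naturalindex(t)\subset\scalefactor^{-1}\convex(t\scalefactor^{1/\DeGiorgiexponent})$, and the conclusion follows by letting $\scalefactor\to1$ -- note also that the statement does not assume the $\initialconvexsetindex$ convex, which your argument implicitly does.
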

Since any convex set is also mean convex, 
from Theorem \ref{teo:meanconvex_intro} it follows 
that each $\convex(t)$ is mean convex and $t\mapsto \convex(t)$ is nonincreasing.

We expect similar uniqueness and stability properties for generalized curvature flow of convex sets also in the anisotropic case with more general $\affine$. However, we leave
this problem for future investigations.

Theorem \ref{teo:gmm_convex_intro}  is a weak formulation of Conjecture \ref{conj:3_1}, where the minimization problem for $\shorthandgeneralfunctional$ is conducted in the larger class $\cS^*$ rather than in the class $\convexsets$. Indeed, in $\convexsets$ we cannot apply the cutting-filling with balls argument used in the proof of Theorem \ref{teo:existence_gmm}; rather using minimal cutting properties of convex sets we can show that the flat flows in $\convexsets$ and their perimeter 
have the following monotonicity: $E(\tau,0)\supset E(\tau,1)\supset \ldots$ and $P(E(\tau,0))\ge P(E(\tau,1))\ge \ldots.$ 
Thus, the map $t\mapsto P(E(\tau,\intpart{t/\tau}))$ is bounded and nonincreasing, and therefore, sequentially compact (w.r.t. $\tau$) in $L_\loc^1(\R_0^+).$ Now using convexity we conclude that 
every limit point is indeed a $\GMM$ (Section \ref{sec:minimizing_movements_in_the_class_conv}):

\begin{theorem}[\textbf{$\GMM$ in the class $\convexsets$}]\label{teo:GMM_in_the_class_conv}
Assume $\norm$ 
is an anisotropy in $\Rn,$ $\affine$ 
satisfies Hypothesis (\ref{hyp:main}a) and (\ref{hyp:main}b) and 
$\forcing\equiv0.$ 
For any $\initialconvexset\in \convexsets$, $\GMM(
\shorthandgeneralfunctional,\convexsets,\initialconvexset)$ is nonempty. 
Moreover, if $\norm$ is Euclidean and $f(r)=r^\DeGiorgiexponent$ for $r>0$ and some $\DeGiorgiexponent\in (0,1],$ then $\GMM(
\shorthandgeneralfunctional,\convexsets,\initialconvexset)$ is a  singleton and concides with the unique minimizing movement in $\MM(
\shorthandgeneralfunctional,\cS^*,\initialconvexset).$
\end{theorem}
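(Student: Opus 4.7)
The plan is to run the Almgren--Taylor--Wang scheme directly in the restricted class $\convexsets$ and exploit the resulting monotonicity of the flat flow in $t$ as a substitute for the density and H\"older estimates used in Theorem~\ref{teo:existence_gmm}. First, for any $F\in\convexsets$, the existence of a minimizer of $\shorthandgeneralfunctional(\cdot; F,\tau)$ in $\convexsets$ follows by the direct method: a minimizing sequence has uniformly bounded $\varphi$-perimeter, and (up to passing to a subsequence and to interiors) converges in $L^1_{\loc}(\Rn)$ to a set in $\convexsets$, while the $L^1_{\loc}$-lower semicontinuity of $\shorthandgeneralfunctional$ is provided by the same argument used in Lemma~\ref{lem:existence_minima}.

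I would then establish the key \emph{cutting property}: any minimizer $E$ of $\shorthandgeneralfunctional(\cdot;F,\tau)$ in $\convexsets$ satisfies $E\subset F$. Comparing $E$ with the competitor $E\cap F\in\convexsets$, the classical monotonicity of anisotropic perimeter under inclusion of convex sets gives $\anisotropicperimeter(E\cap F)\le\anisotropicperimeter(E)$, and since $(E\cap F)\Delta F=F\setminus E\subset E\Delta F$,
\[
\int_{(E\cap F)\Delta F}\affine\!\bigl(\tfrac{\d_F}{\tau}\bigr)\,dx \;\le\; \int_{E\Delta F}\affine\!\bigl(\tfrac{\d_F}{\tau}\bigr)\,dx,
\]
so minimality forces $|E\setminus F|=0$. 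Testing with $F$ itself also yields $\anisotropicperimeter(E)\le\anisotropicperimeter(F)$. Iterating, the flat flow $\{E(\tau,k)\}_k$ is nonincreasing in $k$ and has $\varphi$-perimeter uniformly bounded by $\anisotropicperimeter(\initialconvexset)$. Setting $E_\tau(t):=E(\tau,\intpart{t/\tau})$, the function $t\mapsto|E_\tau(t)|$ is nonincreasing and bounded by $|\initialconvexset|$, so Helly's selection theorem produces $\tau_j\to0^+$ along which $|E_{\tau_j}(t)|$ converges pointwise in $t$; combined with the uniform BV bound, a diagonal/sandwich argument on a countable dense set of times upgrades this to $L^1_{\loc}(\Rn)$-convergence $E_{\tau_j}(t)\to K(t)\in\convexsets$ for every $t\ge0$, giving the required element of $\GMM(\shorthandgeneralfunctional,\convexsets,\initialconvexset)$.

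For the second assertion, fix $\norm=|\cdot|$ and $\affine(r)=r^\DeGiorgiexponent$ with $\DeGiorgiexponent\in(0,1]$. I would first prove that the minimizer of $\shorthandgeneralfunctional(\cdot;F,\tau)$ in $\convexsets$ is unique: since $\sd_F$ is convex whenever $F$ is, the Caselles--Chambolle sub/supersolution comparison for the associated prescribed mean curvature problem~\cite{CCh:2006} allows one to order any two minimizers, forcing them to coincide. A short additional argument, again using the cutting property, shows that the minimizer in $\cS^*$ is already convex whenever $F$ is, so the two schemes produce identical sequences. Coupling this with the uniqueness of the MM in $\cS^*$ supplied by Theorem~\ref{teo:gmm_convex_intro} yields that $\GMM(\shorthandgeneralfunctional,\convexsets,\initialconvexset)$ is the singleton $\{K(\cdot)\}$ coinciding with this MM. I expect the \emph{main obstacle} to lie in this last step: because $r\mapsto r^\DeGiorgiexponent$ is not convex on $\R$ for $\DeGiorgiexponent<1$, the classical ATW comparison arguments do not apply verbatim, and one must carefully transfer the convexity of $\sd_F$ (which requires $F$ convex) into the Caselles--Chambolle framework to obtain uniqueness and to identify the unconstrained and constrained minimizers step by step.
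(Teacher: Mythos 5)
Your first half (existence of a $\GMM$ in $\convexsets$) is essentially correct and runs parallel to the paper's proof: the cutting property $\convex_\tau\subset\convex$ and the monotonicity $\anisotropicperimeter(E(\tau,k))\le\anisotropicperimeter(E(\tau,k-1))$ replace the density estimates, and a Helly-plus-diagonal argument gives convergence at every time. The one difference is that you apply Helly to the volume $t\mapsto|E_\tau(t)|$ while the paper applies it to the perimeter $h_\tau(t)=\anisotropicperimeter(E(\tau,\intpart{t/\tau}))$; since the flat flow is nested, $|A\Delta B|=|A|-|B|$ for the relevant pairs, so your sandwich at continuity points of the limit volume is in fact more direct and avoids the paper's step that two nested convex sets with equal $\norm$-perimeter coincide. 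Just make the bookkeeping explicit: include the (at most countable) discontinuity set of the limiting volume function in the countable family of times along which you extract the diagonal subsequence, exactly as the paper does with the set $J$ for $h_0$.

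The second half has a genuine gap. First, uniqueness of the one-step minimizer in $\convexsets$ does not follow from a ``sub/supersolution comparison'': the comparison machinery (Theorem \ref{teo:compare}, Corollary \ref{cor:minmaxprinciple}) only produces ordered \emph{minimal and maximal} minimizers; it gives no reason why two arbitrary minimizers of $\shorthandgeneralfunctional(\cdot;F,\tau)$ should be ordered, and the paper never claims stepwise uniqueness. Second, and more seriously, your assertion that ``a short additional argument, again using the cutting property, shows that the minimizer in $\cS^*$ is already convex whenever $F$ is'' is exactly the hard point, and the cutting property (which only yields $E\subset F$) cannot deliver it. In the paper this is Proposition \ref{prop:fortunate}: one minimizes the Caselles--Chambolle auxiliary functional $\sE(v)=\int_{\initialconvexset}\norm^o(Dv)+\int_{\initialconvexset}\tfrac{(v+u_0)^2}{2}\,dx$, uses Anzellotti pairings, and invokes the Alvarez--Lasry--Lions convexity theorem \cite{ALL:1997} for the resolvent equation $-\div\,\nabla\norm^o(Dv_0)+v_0+u_0=0$; the structural hypothesis that makes this work is the \emph{concavity of $u_0=\affine(\d_{\initialconvexset}/\tau)$ inside $\initialconvexset$}, i.e.\ concavity of $\affine$ on $(0,+\infty)$ (this is where $\DeGiorgiexponent\le1$ enters), not convexity of $\sd_F$ nor any convexity of $\affine$ (for $\DeGiorgiexponent<1$ the function $\affine(\sd_F/\tau)$ is neither convex nor concave on $\Rn$; only the region inside $F$ matters because minimizers lie in $\cl F$). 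Finally, note that once the convexity of the minimal and maximal unconstrained minimizers is established, your unproved uniqueness claim is also unnecessary: the constrained and unconstrained minimum values then coincide, so every minimizer in $\convexsets$ is a minimizer in $\cS^*$, and any constrained flat flow is sandwiched between the convex minimal and maximal minimizer flows, both of which converge to the unique element of $\MM(\shorthandgeneralfunctional,\cS^*,\initialconvexset)$ by Theorem \ref{teo:gmm_convex_intro}(i) — this is how Corollary \ref{cor:de_giorgi_unique} concludes. As written, your proposal identifies the right framework but leaves its key ingredient (Proposition \ref{prop:fortunate}) unproved and routes the conclusion through an invalid uniqueness step.
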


We observe that we miss the proof of uniqueness of the minimizing movement in $\convexsets$ for $\DeGiorgiexponent>1$. 

Our last result is 
consistency of $\GMM$ with the smooth classical solution 
of \eqref{affine_curvature_flow}\footnote{Thus, in this case,
$\GMM$ is unique for short times.}. Assuming the latter exists and is stable (in the sense of Definition \ref{def:stable_smooth_flow}), following 
some of the ideas of \cite{ATW:1993} and \cite{Kholmatov:2024} 
we show:

\begin{theorem}[\textbf{Consistency}]\label{teo:consistency}
Suppose that $f \in C^\Holderexponent(\R)$ and $g \in C^{\Holderexponent}(\Rn)$
for some $\Holderexponent \in (0,1]$.
Assume that $\norm$ is an elliptic $C^3$-anisotropy and  \eqref{affine_curvature_flow} admits a unique smooth stable solution $\{S(t)\}_{t\in[0,T)}$.  
Then 
for every $E(\cdot) \in \GMM(\shorthandgeneralfunctional,\cS^*,S(0))$ 
we have
$$
E(t) = S(t)\quad\text{for all $t\in [0,T).$}
$$
\end{theorem}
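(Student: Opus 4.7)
The plan is to prove the theorem by a barrier/comparison argument in the spirit of \cite{ATW:1993,Kholmatov:2024}, adapted to the nonlinear velocity law $\affine$ and the anisotropic perimeter. I will trap the flat flow $\{E(\tau_j,k)\}$ between two one-parameter families of smooth sub- and super-solutions of \eqref{affine_curvature_flow}, obtained as small perturbations of $S(\cdot)$, and then pass to the limit first as $\tau_j\to 0^+$ and then as the perturbation parameter vanishes.

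First, for each $\varepsilon\in(0,\varepsilon_0)$ and $\eta\in(0,T)$, the stability of $S(\cdot)$ in the sense of Definition~\ref{def:stable_smooth_flow}, together with the $C^3$-ellipticity of $\norm$, should yield smooth flows $\{S^\pm_\varepsilon(t)\}_{t\in[0,T-\eta]}$ satisfying the strict inequalities
\[
\affine(\normalvelocity^{S^+_\varepsilon})\le -\anisotropicmeancurvature^{S^+_\varepsilon}-\forcing-\varepsilon, \qquad \affine(\normalvelocity^{S^-_\varepsilon})\ge -\anisotropicmeancurvature^{S^-_\varepsilon}-\forcing+\varepsilon,
\]
with $S^-_\varepsilon(0)\Subset S(0)\Subset S^+_\varepsilon(0)$ and $\sup_{t\in[0,T-\eta]}|S^\pm_\varepsilon(t)\Delta S(t)|\to 0$ as $\varepsilon\to 0^+$.

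The core step is a one-step comparison lemma: for $\tau$ sufficiently small (depending on $\varepsilon$ but uniform in the time instant), if $F\in\cS^*$ satisfies $F\subset S^+_\varepsilon(t_0)$, then every minimizer $E^*\in\mathrm{argmin}\,\shorthandgeneralfunctional(\cdot;F,\tau)$ is contained in $S^+_\varepsilon(t_0+\tau)$, and symmetrically for the inner barrier. The plan is to use $\tilde E:=E^*\cap S^+_\varepsilon(t_0+\tau)$ as a competitor in the prescribed mean curvature formulation \eqref{eq:prescur_funco}--\eqref{eq:h}, obtaining
\[
\anisotropicperimeter(E^*)-\anisotropicperimeter(\tilde E)\le -\int_{E^*\setminus S^+_\varepsilon(t_0+\tau)}\left[\affine\Bigl(\tfrac{\sd_F}{\tau}\Bigr)+\forcing\right]\d x.
\]
A Taylor expansion along the $\norm$-normal to $\partial S^+_\varepsilon(t_0+\tau)$, combined with the strict super-solution inequality and an anisotropic calibration argument using the Cahn--Hoffman field associated to $\sd_{S^+_\varepsilon(t_0+\tau)}$, should rewrite the right-hand side as $\anisotropicperimeter(\tilde E)-\anisotropicperimeter(E^*)-(\varepsilon/2-C\tau^\Holderexponent)|E^*\setminus S^+_\varepsilon(t_0+\tau)|$. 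The $C^\Holderexponent$-regularity of $\affine$ and $\forcing$ controls the Taylor remainder by $O(\tau^\Holderexponent)$, so for $\tau$ so small that $C\tau^\Holderexponent<\varepsilon/4$ this forces $|E^*\setminus S^+_\varepsilon(t_0+\tau)|=0$.

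Iterating the one-step lemma on $k$ along the flat flow yields $S^-_\varepsilon(k\tau_j)\subset E(\tau_j,k)\subset S^+_\varepsilon(k\tau_j)$ for all $k\tau_j\le T-\eta$ and $\tau_j<\tau_0(\varepsilon,\eta)$. Passing to the limit in \eqref{partenza_binorio9} first as $\tau_j\to 0^+$, then $\varepsilon\to 0^+$ and $\eta\to 0^+$, gives $|E(t)\Delta S(t)|=0$ for every $t\in[0,T)$, which is the claimed equality in $\cS$. The hard part will be the one-step lemma: because $\affine$ is only $C^\Holderexponent$, the $O(\tau)$ linearization of the classical argument is unavailable, and the non-Lipschitz Taylor error $O(\tau^\Holderexponent)$ must be absorbed by the strict gap $\varepsilon$. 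This requires the inductive hypothesis to propagate not only $L^1$-closeness but also a tubular-neighborhood $L^\infty$-closeness of $E(\tau_j,k)$ to $S^\pm_\varepsilon(k\tau_j)$, which in turn should be secured via the uniform density estimate \eqref{uniform_per_density_estos} preventing thin tendrils of $E(\tau_j,k)$ from escaping the outer barrier.
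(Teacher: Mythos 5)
Your strategy is viable but genuinely different from the paper's. The paper does not prove a one-step ``$F\subset$ barrier $\Rightarrow$ minimizer $\subset$ evolved barrier'' lemma by a competitor/calibration computation; instead it exploits the full two-parameter stable families $L^\pm[r,s,a,t]$ of Definition \ref{def:stable_smooth_flow}: a short-time a priori confinement of the flat flow obtained by comparison with shrinking Wulff shapes (Theorem \ref{teo:comparison_with_balls}), then a continuity-in-$r$ first-touching argument in which the Euler--Lagrange equation of the minimizer at a touching point, together with the discrete strict inequality of Proposition \ref{prop:properties_of_stable_flows}(b), yields a contradiction (Lemma \ref{lem:discrete_comparison}, which needs elliptic regularity of $\norm$-perimeter almost-minimizers); since the Wulff-shape comparison only lasts a time $\bar t$, the sandwich is restarted finitely many times with parameters $a_l(s)=4\atwfunction(2a_{l-1}(s))$ controlled by Proposition \ref{prop:properties_of_stable_flows}(d), before letting $j\to+\infty$ and $s\to0^+$. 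Your plan replaces all of this by a single induction with fixed strict barriers $S^\pm_\varepsilon$ (which are exactly $L^\pm[0,\varepsilon,0,\cdot]$, so no new construction or separate $\varepsilon\to0$ stability statement is needed beyond Proposition \ref{prop:properties_of_stable_flows}(d)) and a calibration-based one-step inclusion, in the spirit of \cite{ChDgM:2023}; this buys you freedom from the touching-point regularity theory and from the restart bookkeeping, at the price of the Cahn--Hoffman/divergence computation in a tubular neighborhood.

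Two points in your core step need repair, though both are fixable. First, the inequality you announce goes the wrong way: the calibration with a field $z$ satisfying $z\cdot\nu\le\dualnorm(\nu)$, $z\cdot\nu_W=\dualnorm(\nu_W)$ on $\p W$, $W:=S^+_\varepsilon(t_0+\tau)$, gives a \emph{lower} bound $\anisotropicperimeter(E^*)-\anisotropicperimeter(\tilde E)\ge\int_{E^*\setminus W}\div z\,dx$, not an upper bound of the perimeter difference by the excess volume (which is false in general, since $E^*\setminus W$ may carry a lot of perimeter). The correct chaining is: minimality in the form \eqref{eq:prescur_funco} gives $\anisotropicperimeter(E^*)-\anisotropicperimeter(\tilde E)\le-\int_{E^*\setminus W}\bigl[\affine\bigl(\tfrac{\sd_F}{\tau}\bigr)+\forcing\bigr]dx$, and combining with the calibration bound yields $\int_{E^*\setminus W}\bigl[\div z+\affine\bigl(\tfrac{\sd_F}{\tau}\bigr)+\forcing\bigr]dx\le0$, after which the pointwise positivity of the integrand (strict gap $\varepsilon$ minus the $C\tau^{\Holderexponent}$ H\"older error and the $O(\dist)$ error in $\div z$) forces $|E^*\setminus W|=0$. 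Second, to guarantee that the excess lies inside the tubular neighborhood where $z$ is defined and the pointwise inequality holds, the density estimate \eqref{uniform_per_density_estos} alone is not the right tool; use instead the one-step $L^\infty$ bound \eqref{L_cheksiz_baho} (or Theorem \ref{teo:comparison_with_balls}), which confines $E^*\Delta F$ to a $2\rho_\tau$-neighborhood of $\p F$ and hence, since $F\subset S^+_\varepsilon(t_0)$ and the barrier moves at bounded speed, confines $E^*\setminus W$ to an $o(1)$-neighborhood of $\p W$. With these corrections, and the symmetric argument for the inner barrier, your induction and the limits $\tau_j\to0^+$, $\varepsilon\to0^+$, $\eta\to0^+$ do give the conclusion.
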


As it happens for the classical consistency
proof for the Almgren-Taylor-Wang functional,
the proof of this nontrivial theorem heavily relies on the stability of the flow, strong comparison principles and discrete comparison principles. 

The paper is organized as follows. In 
Section \ref{sec:some_notation_and_examples_of_affine},
after setting the notation, 
 we quickly shows some examples of interesting functions
$\affine$. 
In Section 
\ref{sec:proof_of_Theorem_teo:existence_gmm}
 we prove Theorem \ref{teo:existence_gmm}. Various comparison results 
are  studied in Section \ref{sec:rescalings_of_GMM_and_comparison_with_balls}. 
The evolution of mean convex sets and convex sets are considered in 
Sections \ref{sec:evolution_of_mean_convex_sets},
\ref{sec:evolution_of_bounded_convex_sets}, and \ref{sec:minimizing_movements_in_the_class_conv}.
The consistency of GMM with smooth solutions is proven in Section 
\ref{sec:consistency_with_smooth_flows}. Finally, we conclude the paper with two appendices, where 
we establish some technical results, needed in various proofs. 

\smallskip
Shortly after the conclusion of this paper,
we became aware of 
the paper \cite{DeGennaro:2024}, where
the author addresses a similar problem, 
showing existence of level set solutions to the power mean curvature flow,
via the minimizing movements. That paper appears to be completely independent
of the present paper. 

\subsection*{Acknowledgements}
G. Bellettini acknowledges support from PRIN 2022PJ9EFL ``Geometric Measure Theory: Structure of Singular Measures, Regularity Theory 
and Applications in the Calculus of Variations'', and from GNAMPA (INdAM). Sh. Kholmatov acknowledges support from the Austrian Science Fund (FWF) Stand-Alone project P 33716. 

\section{Notation and examples of functions $\affine$}\label{sec:some_notation_and_examples_of_affine}
We write $\R^+:=(0,+\infty),$ $\R_0^+:=[0,+\infty)$ and $\N_0=\N\cup\{0\}.$
The 
inclusion
 $E\Subset F$ means that 
$E$ is bounded, $E \subset F$ and $\dist(\p E,\p F)>0;$
we also set $E^c := \Rn \setminus E$. 
Unless otherwise stated, $B_r:=B_r(0).$ In view of the definition of $\cS$ in \eqref{eq:def_S}, every set $E$ we consider 
coincides with the set $E^{(1)}$ of its points of density $1$. 
Therefore $\p E = \cl{\p^* E}$  and $\d_E(\cdot) =\dist(\cdot,\p E) = \dist(\cdot,\p^* E).$ We denote by $\hull{E}$ the closed convex hull of $E.$  
An anisotropy $\norm: \Rn \to [0,+\infty)$ 
is  a positively one-homogeneous even convex function 
equivalent to the Euclidean norm;
its dual is defined as 
$$
\dualnorm(\eta) = \sup_{\norm(\xi)=1} \,\,\xi\cdot \eta=1,
$$
so that $\dualnorm$ is an anisotropy and 
\begin{equation}\label{norm_bounds}
c_\norm |\xi| \le \dualnorm(\xi) \le C_\norm |\xi|,\quad \xi\in\Rn,
\end{equation}
for some $C_\norm\ge c_\norm>0$. 

Recall the following anisotropic isoperimetric inequality \cite{Maggi:2012}:
\begin{equation}\label{aniso_isop_ineq}
\anisotropicperimeter(E) \ge \constiso
|E|^{\frac{n-1}{n}},\quad 
\constiso
:= \frac{\anisotropicperimeter(\unitballnorm)}{|\unitballnorm|^\frac{n-1}{n}},
\end{equation}
where $P_\norm$ 
is defined in \eqref{eq:anisotropic_perimeter} and\footnote{Sometimes
$\unitballnorm$ is called Wulff shape.} 
$\unitballnorm:=\{\norm\le 1\}$.
In the Euclidean case $\norm (\cdot) = \vert \cdot \vert$, $c_{n,|\cdot|} = n\omega_n^{1/n}$, $B = B_{\vert \cdot \vert}$,
and $\anisotropicmeancurvature=\meancurvature$. Our conventions are: 
the mean curvature is positive for (uniformly) convex sets, 
and the normal velocity is increasing for expanding sets.

Let us give some examples of function $\affine$.

\begin{example}[\textbf{$\affine$ a power}]\label{ex:fisaffine}
Assume that $\affine$ is strictly increasing, continuous and 
\begin{equation}\label{tahliqilmas}
c_1r^{\DeGiorgiexponent_1} \le \affine(r)\le c_2r^{\DeGiorgiexponent_2}\quad\text{for all large $r>1$}
\end{equation}
for some $0<c_1\le c_2$ and $0<\DeGiorgiexponent_1 \le 
\DeGiorgiexponent_2
\le \DeGiorgiexponent_1+1.$ 
Then for any $\tau>0$ the system \eqref{good_density_property} is solvable and $r_\tau$ satisfies \eqref{lower_bound_r_tau}. Indeed, solvability of \eqref{good_density_property} follows from the continuity and the monotonicity of $f.$ In particular, by the inverse monotone function theorem both maps $\tau\mapsto \rho_\tau$ and $\tau\mapsto r_\tau$  are continuous and increasing. 
Moreover, clearly, $\rho_\tau,r_\tau\to0$ as $\tau\to0^+.$ Now by \eqref{tahliqilmas} 
and the equality $\rho_\tau \affine(\rho_\tau/\tau) = C_0$ 
\begin{equation}\label{amasu87}
c_1' \tau^{\frac{\DeGiorgiexponent_2}{1+\DeGiorgiexponent_2}}
\le \rho_\tau < c_2' \tau^{\frac{\DeGiorgiexponent_1}{1+\DeGiorgiexponent_1}}
\end{equation}
for all small $\tau>0$ and for some $c_2'\ge c_1'>0$ independent of $\tau.$ 
Similarly, from the equality $r_\tau \affine(\frac{r_\tau+2\rho_\tau}{\tau}) = C_1$ and estimates \eqref{tahliqilmas} and \eqref{amasu87} we get 
\begin{equation*}
c_1'' \tau \le r_\tau^{1/\DeGiorgiexponent_2} (r_\tau + 2c_2' 
\tau^{\frac{\DeGiorgiexponent_1}{1+\DeGiorgiexponent_1}}) = 
r_\tau^{1+1/\DeGiorgiexponent_2} + c_2' r_\tau^{1/\DeGiorgiexponent_2}
\tau^{\frac{\DeGiorgiexponent_1}{1+\DeGiorgiexponent_1}}
\end{equation*}
for all small $\tau>0$ and for some $c_1''>0.$ 
This inequality implies either $\frac{\tau}{r_\tau^{1+1/\DeGiorgiexponent_2}}$ 
or $\frac{\tau}{r_\tau^{1/\DeGiorgiexponent_2}\tau^{\frac{\DeGiorgiexponent_1}{1+\DeGiorgiexponent_1}}}$ is uniformly bounded. 
Since $\DeGiorgiexponent_1\le\DeGiorgiexponent_2\le 1+\DeGiorgiexponent_1,$ either condition implies $r_\tau \ge c\tau$ for some $c>0$ and all small $\tau>0.$ Therefore, \eqref{lower_bound_r_tau} holds.
\end{example}

\begin{example}[\textbf{$\affine$ an exponential}]\label{ex:badexample}
Let $\affine(r)= e^r$ for large $r>0.$ 
Then $\rho_\tau$ in \eqref{good_density_property} satisfies 
\begin{equation}\label{perim_shahri}
\rho_\tau = -\tau \ln\rho_\tau + \tau \ln\varsigma_0.
\end{equation}
For sufficiently small $\tau>0$ setting 
$
\rho_\tau = \tau\ln(\tau^{-1}) (1+u)
$
in \eqref{perim_shahri} we get 
$$
F(u,x,y):= u +x + [\ln(1+u) - \ln\varsigma_0]y=0,\quad x: = \tfrac{\ln\ln(\tau^{-1})}{\ln(\tau^{-1})},\quad y:=\tfrac{1}{\ln(\tau^{-1})}.
$$
This analytic implicit function admits a unique solution $u=w(x,y),$ where $w(\cdot,\cdot)$ is some real analytic function in a neighborhood of $(0,0)$, and thus, for 
sufficiently small $\tau>0,$
\begin{equation}\label{explicit_rhotau}
\rho_\tau = \tau\ln(\tau^{-1})\Big[1+ w\Big(
\tfrac{\ln\ln(\tau^{-1})}{\ln(\tau^{-1})},
\tfrac{1}{\ln(\tau^{-1})}
\Big)\Big]. 
\end{equation}
Moreover, since $r_\tau$ satisfies the equation
$$
\frac{r_\tau + 2\rho_\tau}{\tau} = \ln \varsigma_1 + \ln (r_\tau^{-1}),
$$
we have 
\begin{equation}\label{adsfndf879}
\frac{r_\tau}{\tau\ln (r_\tau^{-1}) } 
+ \frac{\rho_\tau}{\tau\ln(\rho_\tau^{-1})}\frac{2\ln (\rho_\tau^{-1})}{\ln (r_\tau^{-1})} = 1 +\frac{\ln \varsigma_1}{\ln (r_\tau^{-1})}.
\end{equation}
In view of \eqref{perim_shahri} 
$$
\lim\limits_{\tau\to0^+} \frac{\rho_\tau}{\tau\ln(\rho_\tau^{-1})} = 1.
$$
Therefore, using $\frac{r_\tau}{\tau\ln (r_\tau^{-1}) }\ge0$ and  recalling that $r_\tau\to 0^+,$ from \eqref{adsfndf879} we get 
$$
\limsup_{\tau\to0} \frac{2\ln (\rho_\tau^{-1})}{\ln (r_\tau^{-1})} \le 1.
$$
In particular, for for sufficiently small $\tau>0,$ 
$$
\frac{2\ln (\rho_\tau^{-1})}{\ln (r_\tau^{-1})} \le \frac{3}{2}.
$$
This and \eqref{explicit_rhotau} imply
$$
\frac{r_\tau}{\tau} \le \frac{\rho_\tau^{4/3}}{\tau} = \tau^{1/3}[\ln(\tau^{-1}) (1+w)]^{4/3} \to0
$$
as $\tau \to 0^+.$ Hence, $\affine$ does not satisfy \eqref{lower_bound_r_tau}.
\end{example}

Throughout the paper we always assume $n\ge2.$ We refer to \cite{Pascali:1996} for the case $n=1$. 

\section{Proof of Theorem \ref{teo:existence_gmm}}
\label{sec:proof_of_Theorem_teo:existence_gmm}
This section is devoted to the proof of Theorem \ref{teo:existence_gmm}. 
Assume as usual the convention \eqref{eq:we_shorthand}.  

\begin{lemma}\label{lem:existence_minima}
Suppose that $\affine$ satisfies 
(\ref{hyp:main}a) and that $\forcing \in L^1(\Rn)$.
Then for any $F\in \cS^*$ and $\tau>0$  
there exists a minimizer of 
$\shorthandgeneralfunctional
(\cdot;F,\tau)$ in $\cS^*$. 
\end{lemma}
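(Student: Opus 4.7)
The plan is to apply the direct method of the calculus of variations. I will take a minimizing sequence, extract a subsequential $L^1_{\loc}$-limit using BV-compactness, and verify lower semicontinuity of each of the three contributions to $\shorthandgeneralfunctional$.

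First I would check that $\shorthandgeneralfunctional(\cdot;F,\tau)$ is bounded below and that its infimum is finite. Since $\affine$ is odd and continuous with $\affine(0)=0$ and strictly increasing, $\affine(\d_F/\tau)\ge 0,$ so the velocity term is nonnegative. Combined with $P_\norm\ge 0$ and $\int_E\forcing\,dx\ge -\|\forcing^-\|_{L^1(\Rn)},$ this gives $\shorthandgeneralfunctional(E;F,\tau)\ge -\|\forcing^-\|_{L^1(\Rn)}.$ Taking $E=F$ as a competitor shows $\inf\shorthandgeneralfunctional(\cdot;F,\tau)\le \anisotropicperimeter(F)+\|\forcing\|_{L^1(\Rn)}<+\infty.$

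Next I would extract a compact minimizing sequence. Let $(E_j)\subset\cS^*$ satisfy $\shorthandgeneralfunctional(E_j;F,\tau)\to\inf$. The lower bound from the previous step combined with the upper bound on the infimum yields $\sup_j \anisotropicperimeter(E_j)<+\infty,$ so by \eqref{norm_bounds} also $\sup_j P(E_j)<+\infty.$ The anisotropic isoperimetric inequality \eqref{aniso_isop_ineq} then gives $\sup_j|E_j|<+\infty.$ By the standard BV-compactness theorem, a subsequence (still denoted $(E_j)$) converges to some $\chi_E$ in $L^1_{\loc}(\Rn)$ and a.e., where $E$ has finite perimeter and, by Fatou on $\chi_{E_j}$, $|E|<+\infty.$ Choosing the representative $E=E^{(1)}$ of points of density $1$ places $E\in\cS^*.$

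Finally I would show lower semicontinuity term by term. The perimeter $\anisotropicperimeter$ is $L^1_{\loc}$-lower semicontinuous by standard theory. For the forcing term, since $\forcing\in L^1(\Rn),$ for every $\varepsilon>0$ choose $R>0$ with $\int_{\Rn\setminus B_R}|\forcing|<\varepsilon$; dominated convergence on $B_R$ together with this uniform tail bound yields $\int_{E_j}\forcing\,dx\to\int_E\forcing\,dx.$ For the velocity term, $\chi_{E_j\Delta F}\to\chi_{E\Delta F}$ a.e. and the integrand $\affine(\d_F/\tau)\ge 0,$ so Fatou's lemma gives
\begin{equation*}
\int_{E\Delta F}\affine\Big(\tfrac{\d_F}{\tau}\Big)dx \le \liminf_{j\to\infty}\int_{E_j\Delta F}\affine\Big(\tfrac{\d_F}{\tau}\Big)dx.
\end{equation*}
Summing the three inequalities shows $\shorthandgeneralfunctional(E;F,\tau)\le\liminf_j \shorthandgeneralfunctional(E_j;F,\tau)=\inf \shorthandgeneralfunctional(\cdot;F,\tau),$ so $E$ is a minimizer in $\cS^*.$

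I expect the main technical point to be ruling out loss of compactness: a priori the minimizing sequence could have mass escaping to infinity, which is why I work with $L^1_{\loc}$-convergence and verify lower semicontinuity directly via Fatou, rather than trying to upgrade to $L^1(\Rn)$-convergence. Notice that the nonnegativity of $\affine(\d_F/\tau),$ which is a consequence of assumption (\ref{hyp:main}a) (oddness of $\affine$), is exactly what makes Fatou applicable without any extra integrability of the velocity integrand.
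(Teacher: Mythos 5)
Your proposal is correct and follows essentially the same route as the paper: the direct method, with the uniform perimeter bound from the nonnegativity of $\affine(\d_F/\tau)$, BV-compactness in $L^1_{\loc}$, the anisotropic isoperimetric inequality to keep $|E|$ finite, and term-by-term lower semicontinuity. The only difference is cosmetic: the paper simply invokes the $L^1_{\loc}$-lower semicontinuity of $\shorthandgeneralfunctional(\cdot;F,\tau)$, whereas you spell it out via Fatou for the velocity term and a tail-splitting argument for the forcing term, which is a faithful elaboration of the same step.
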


As in the case of prescribed mean curvature functional, the assumption $\forcing\in L^1(\Rn)$ can be relaxed to $\forcing^-\in L^1(\Rn)$.

\begin{proof}
Let us study the minimum problem 
$$
\inf_{E\in \cS^*}\,\,\, \shorthandgeneralfunctional(E;F,\tau).
$$
Let $(E_i)\subset \cS^*$ be a minimizing sequence. We may assume 
$$
\shorthandgeneralfunctional(E_i;F,\tau)\le
\shorthandgeneralfunctional
(F;F,\tau)\quad\text{for all $i\ge1.$}
$$
Then 
$$
\anisotropicperimeter(E_i) + \int_{E_i\Delta F} \veloc{F}{x}{\tau} ~dx 
\le \shorthandgeneralfunctional
(F;F,\tau) + \int_{\Rn} |\forcing | ~dx:=C.
$$
In particular, by the nonnegativity of $\affine$ in $\R_0^+,$ the sequence $(\anisotropicperimeter(E_i))$ is 
bounded, 
and hence, by compactness, there exists $E\in BV_\loc(\Rn;\{0,1\})$ such that, up to a not  relabelled subsequence, $E_i \to E$ in $L^1_\loc(\Rn)$ as $i\to+\infty.$ By the $L_\loc^1$-lower semicontinuity of the anisotropic
perimeter, 
$$
\anisotropicperimeter(E) \le \liminf_{i\to+\infty} \anisotropicperimeter(E_i)\le C<+\infty.
$$
Moreover, by the $L_\loc^1$-convergence and the isoperimetric inequality \eqref{aniso_isop_ineq}, for any 
bounded open set $U\subset\Rn$
$$
|U\cap E| = \lim\limits_{i\to+\infty} |U\cap E_i| \le \liminf\limits_{i\to+\infty} |E_i| \le 
(\constiso^{\frac{n}{n-1}})^{-1}
\,\liminf\limits_{i\to+\infty} 
\anisotropicperimeter(E_i)^{\frac{n}{n-1}} \le 
\tfrac{C^{\frac{n}{n-1}}}{\constiso^{\frac{n}{n-1}}}.
$$
Thus, letting $U\nearrow\Rn$ we get $|E|<+\infty$ and hence, $E\in \cS^*.$ 
By the $L_\loc^1$-lower semicontinuity of $\shorthandgeneralfunctional(\cdot;F,\tau,k),$ $E$ is a minimizer.
\end{proof}

\subsection{$L^\infty$-bound for $\d_F$ }\label{subsec:L_infty_bound_for_d_F}
Assume Hypothesis \eqref{hyp:main}.
For $F\in \cS^*$ and $\tau>0,$ let $F_\tau$ be a minimizer of 
$\shorthandgeneralfunctional(\cdot;F,\tau).$ 
In \eqref{L_cheksiz_baho}, 
on the basis of \eqref{nice_estimate_0o09}, we 
establish an upper bound for 
$$
\sup_{F_\tau \Delta F}\,\d_F,
$$
which is necessary to get the density estimates in Section \ref{subsec:density_estimates}
centered at points of $\partial F_\tau$.

Take any $x\in \cl{F_\tau}\setminus F$ and set
$$
r_x:=\d_F(x)>0, \qquad
B_r:=B_r(x).
$$
We observe that $B_r\cap F=\emptyset$ for all $r\in (0,r_x),$ so that $F\setminus F_\tau= F\setminus (F_\tau\setminus B_r).$ Then 
by the minimality of $F_\tau,$
\begin{multline}\label{come_si_llama}
0\le 
\shorthandgeneralfunctional
(F_\tau\setminus B_r; F,\tau) -
\shorthandgeneralfunctional
(F_\tau; F, \tau) \\
= \anisotropicperimeter(F_\tau\setminus B_r) - \anisotropicperimeter(F_\tau) -\int_{F_\tau\cap B_r} \veloc{F}{y}{\tau}dy - \int_{F_\tau\cap B_r} \forcing dy.
\end{multline}
Using the properties of the reduced boundary \cite[Theorem 16.3]{Maggi:2012}, \eqref{norm_bounds} 
and the Euclidean isoperimetric inequality, for a.e. $r\in (0,r_x)$ we get 
\begin{equation}\label{napoli_centrale}
\begin{aligned}
\anisotropicperimeter(F_\tau \setminus B_r) - \anisotropicperimeter(F_\tau ) = & \,\int_{F_\tau \cap\p B_r} \dualnorm(-\nu_{B_r})d\cH^{n-1} - 
\anisotropicperimeter(F_\tau ,B_r) 
\\
= &\, \int_{F_\tau \cap \p B_r} \Big(\dualnorm(-\nu_{B_r}) + \dualnorm(\nu_{B_r}) \Big) d\cH^{n-1} - \anisotropicperimeter(F_\tau \cap B_r) 
\\
\le &\, 2\Constdualnorm \cH^{n-1}(F_\tau \cap \p B_r) - \constdualnorm n\omega_n^{1/n}|F_\tau \cap B_r|^{\frac{n-1}{n}}.
\end{aligned}
\end{equation}
Since $\d_F \ge r_x-r$ in $B_r,$ by the increasing monotonicity of $\affine$ we obtain
\begin{equation*}
-\int_{F_\tau \cap B_r} \veloc{F}{y}{\tau}dy \le -\affine\Big(\frac{r_x-r}{\tau}\Big)|F_\tau \cap B_r|.
\end{equation*}
Finally, if we also assume $r<\constantforcingMorrey,$ 
with $\constantforcingMorrey
>0$ as in \eqref{eq:on_existence_of_gammag},
then 
\begin{equation}\label{romano_pizzo}
\int_{F_\tau \cap B_r} \forcing dy \le \frac{\constdualnorm n\omega_n^{1/n}}{4}|F_\tau \cap B_r|^{\frac{n-1}{n}}.
\end{equation}
Inserting \eqref{napoli_centrale}-\eqref{romano_pizzo}
in \eqref{come_si_llama} we get 
\begin{equation}\label{shze57}
2\Constdualnorm \cH^{n-1}(F_\tau \cap \p B_r) \ge \affine\Big(\frac{r_x-r}{\tau}\Big)|F_\tau \cap B_r| + \frac{3\constdualnorm n\omega_n^{1/n}}{4}|F_\tau \cap B_r|^{\frac{n-1}{n}}
\end{equation}
for a.e. $r\in (0, r_x\wedge \constantforcingMorrey
).$ 
Since $r_x>r$ and $f>0$ in $\R^+,$ this inequality implies 
$$
\cH^{n-1}(F_\tau \cap \p B_r) \ge \frac{3\constdualnorm n\omega_n^{1/n}}{8\Constdualnorm }|F_\tau \cap B_r|^{\frac{n-1}{n}},
$$
or, after integration,
\begin{equation}\label{eq:lower_volume_density_estimate}
|F_\tau \cap B_r| \ge \Big(\tfrac{3\constdualnorm}{8\Constdualnorm}\Big)^n 
\omega_nr^n,\quad r\in (0, r_x\wedge \constantforcingMorrey].
\end{equation}
Plugging this volume density estimate in \eqref{shze57} and using the positivity of the last term, we get 
$$
\affine\Big(\frac{r_x-r}{\tau}\Big)\,\Big(\tfrac{3\constdualnorm}{8\Constdualnorm}\Big)^n \omega_nr^n \le 2\Constdualnorm \cH^{n-1}(F_\tau \cap \p B_r) \le 2
\Constdualnorm n\omega_n r^{n-1},
$$
and hence, by the continuity of $\affine,$ 
\begin{equation}\label{nice_estimate_0o09}
r\affine\Big(\frac{r_x-r}{\tau}\Big) \le C_1
:= 2\Constdualnorm n \Big(\tfrac{8
\Constdualnorm}{3\constdualnorm}\Big)^n,\quad r\in [0,r_x\wedge \constantforcingMorrey]. 
\end{equation}

Now assume $x\in \cl{F}\setminus F_\tau $ 
and set again $r_x:=\d_F(x)>0.$ In this case for a.e. $r\in (0,r_x\wedge \constantforcingMorrey),$ we use the properties of the reduced boundary, \eqref{norm_bounds} 
and the Euclidean isoperimetric inequality to get
\begin{align*}
\anisotropicperimeter(F_\tau \cup B_r) - 
\anisotropicperimeter(F_\tau ) = 
&\, 2\int_{F_\tau ^c\cap \p^*B_r} \Big(\dualnorm(\nu_{B_r}) + \dualnorm(-\nu_{B_r}) \Big) d\cH^{n-1} - P_{\norm^\#}(F_\tau \cap B_r) \\
\le &\, 2\Constdualnorm \cH^{n-1}(F_\tau ^c\cap \p^*B_r) - \constdualnorm n\omega_n^{1/n}|F_\tau ^c\cap B_r|^{\frac{n-1}{n}}.
\end{align*}
Moreover, since $\d_F \ge r_x-r$ in $B_r$  (recall that $B_r\subset F$ by the choice of $r_x$), by the monotonicity of $\affine$  
$$
-\int_{F_\tau ^c\cap B_r} \veloc{F}{y}{\tau}dy \le -\affine\Big(\frac{r_x-r}{\tau}\Big)|F_\tau ^c\cap B_r|.
$$
Finally, using  $r<\constantforcingMorrey$ and the Morrey-type extimate \eqref{eq:on_existence_of_gammag} we find 
$$
\int_{F_\tau ^c\cap B_r} \forcing dy \le \frac{\constdualnorm n\omega_n^{1/n}}{4}|F_\tau ^c\cap B_r|^{\frac{n-1}{n}}.
$$
Inserting these estimates in 
\begin{multline*}
0\le \shorthandgeneralfunctional
(F_\tau \cup B_r; F,\tau)-
\shorthandgeneralfunctional
(F_\tau ;\affine,\tau) \\
= \anisotropicperimeter(F_\tau \cup B_r) - 
\anisotropicperimeter(F) 
- \int_{F_\tau ^c\cap B_r}\veloc{F}{y}{\tau}~dy + \int_{F_\tau ^c\cap B_r} \forcing ~dy,  
\end{multline*}
we get 
$$
2\Constdualnorm \cH^{n-1}(F_\tau ^c\cap \p B_r) \ge \affine\Big(\frac{r_x-r}{\tau}\Big)|F_\tau ^c\cap B_r| + \frac{3\constdualnorm n\omega_n^{1/n}}{4}|F_\tau ^c\cap B_r|^{\frac{n-1}{n}},
$$
and hence, repeating the same aguments above, 
we find the same inequality as \eqref{nice_estimate_0o09}. Thus, 
for any $x\in \cl{F_\tau \Delta F}$ we have, inverting \eqref{nice_estimate_0o09},
\begin{equation}\label{upper_bound_rx}
r_x \le r + \tau \affine^{-1}\Big(\tfrac{C_1}{r}\Big),\qquad 
r_x:=\d_F(x)>0, 
\quad r\in (0, r_x\wedge \constantforcingMorrey).
\end{equation}
For any $\tau>0$ let $\rho_\tau:=\rho_\tau(C_1)>0$ 
be the unique number (compare (\ref{hyp:main}b)) such that 
\begin{equation}\label{eq:rho_tau}
\rho_\tau \affine\Big(\frac{\rho_\tau}{\tau}\Big) = C_1.
\end{equation}
Clearly, $\tau\mapsto \rho_\tau$ is continuous in $\R^+$ and $\rho_\tau\to0^+$ as $\tau\to0^+.$ 
In particular
\begin{equation}\label{eq:definition_of_tau0}
\exists \tau_0 = \tau_0(\affine,\forcing)
 > 0 : \rho_\tau\le \constantforcingMorrey\quad \text{for all $\tau\in (0,\tau_0).$}
\end{equation}
Let us estimate $r_x$ with a multiple of $\rho_\tau.$
For $\tau\in(0,\tau_0),$ if $r_x\le \rho_\tau,$ we are done. If $\rho_\tau<r_x,$ by the choice of $\tau_0,$ we can apply \eqref{upper_bound_rx} with $r=\rho_\tau$ and the equality \eqref{eq:rho_tau} to get  
$$
r_x \le \rho_\tau + \tau \affine^{-1}\Big(\tfrac{C_1}{\rho_\tau}\Big) = 2\rho_\tau.
$$
Thus we finally get the desired $L^\infty$-bound
\begin{equation}\label{L_cheksiz_baho}
\sup_{x\in\cl{F_\tau \Delta F}}\,\d_F(x) \le 2\rho_\tau,\quad \tau\in (0,\tau_0). 
\end{equation}
This estimate will be used also in Theorem \ref{teo:comparison_with_balls}.

\subsection{Density estimates} \label{subsec:density_estimates}
Assume Hypothesis \eqref{hyp:main} and for $F\in \cS^*,$ $\tau_0>0$ 
(defined in \eqref{eq:definition_of_tau0}) and $\tau\in(0,\tau_0)$  let $F_\tau $ 
be a minimizer of $\shorthandgeneralfunctional(\cdot;F,\tau).$ 
Let $\constantforcingMorrey
>0$ be as in \eqref{eq:on_existence_of_gammag}. We 
establish uniform density estimates for minimizers $F_\tau $ in balls $B_r(x),$ 
centered at 
$$
x\in\p F_\tau
$$
 (see \eqref{eq:lower_volume_density_estimate_bis}, 
\eqref{eq:upper_volume_density_estimate}
and \eqref{eq:lower_perimeter_density_estimate}).  
Basically, we use the arguments of Section
\ref{subsec:L_infty_bound_for_d_F} and 
the function $\tau \to \rho_\tau$ in 
\eqref{eq:rho_tau}, but some extra care is
needed because now 
$$
B_r=B_r(x)
$$
 may intersect the boundary of $F$ and we need to
estimate the differences 
\begin{multline*} 
\int_{(F_\tau \setminus B_r)\Delta F}
\veloc{F}{y}{\tau}dy 
-
\int_{F_\tau \Delta F}\veloc{F}{y}{\tau}dy 
\\
= 
-\int_{B_r\cap (F_\tau \setminus F)} \veloc{F}{y}{\tau}dy  
+ \int_{B_r\cap F_\tau \cap F} \veloc{F}{y}{\tau}dy
\le \int_{B_r\cap F_\tau \cap F} \veloc{F}{y}{\tau}dy 
\end{multline*}
and 
\begin{multline*} 
\int_{(F_\tau \cup B_r)\Delta F}\veloc{F}{y}{\tau}dy 
-
\int_{F_\tau \Delta F}\veloc{F}{y}{\tau}dy \\
= 
-\int_{B_r\cap (F_\tau ^c\setminus F^c)} \veloc{F}{y}{\tau}dy  
+ \int_{B_r\cap F_\tau ^c\cap F^c} \veloc{F}{y}{\tau}dy 
\le  \int_{B_r\cap F_\tau ^c\cap F^c} \veloc{F}{y}{\tau}dy.
\end{multline*}
Note that if $x\in \p F_\tau $ and $y\in B_r\cap F_\tau \cap F$ 
(resp. $y\in B_r\cap F_\tau ^c\cap F^c$), by the $1$-lipschitzianity of $\d_F$ and \eqref{L_cheksiz_baho}  
$$
\d_F(y) \le \d_F(x) + |y-x| \le 2\rho_\tau + r,
$$
and hence, by the monotonicity of $\affine$
$$
\int_{B_r\cap F_\tau \cap F} \veloc{F}{y}{\tau}dy \le \affine\Big(\tfrac{r+2\rho_\tau}{\tau}\Big) |B_r\cap F_\tau |
$$
and similarly  
$$
\int_{B_r\cap F_\tau ^c\cap F^c} \veloc{F}{y}{\tau}dy \le \affine\Big(\tfrac{r+2\rho_\tau}{\tau}\Big) |B_r\cap F_\tau ^c|.
$$
For any $\tau\in (0,\tau_0)$ let $r_\tau>0$ be the 
unique number (compare (\ref{hyp:main}b)) satisfying 
\begin{equation}\label{def_r_tau}
r_\tau \affine\Big(\tfrac{r_\tau + 2\rho_\tau}{\tau}\Big) = \frac{\constdualnorm n}{2}.
\end{equation}
By the continuity of $\affine,$ $\tau\mapsto r_\tau$ is continuous and $r_\tau\to0$ as $\tau\to0^+.$  

Possibly decreasing $\tau_0$ in \eqref{eq:definition_of_tau0}, 
we assume 
\begin{equation}\label{eq:definition_of_tau0_bis}
r_\tau<\constantforcingMorrey\qquad
 {\rm for~ all}~ \tau\in (0,\tau_0).
\end{equation}
 Now, by the increasing monotonicity of $r\mapsto r\affine((r+2\rho_\tau)/\tau)$ and the definition of $r_\tau$, for any $r\in (0,r_\tau]$ we have
$$
\affine\Big(\tfrac{r+2\rho_\tau}{\tau}\Big) |B_r\cap F_\tau | =
\omega_n^{1/n} r \affine\Big(\tfrac{r+2\rho_\tau}{\tau}\Big)|B_r\cap F_\tau |^{\frac{n-1}{n}} \le \frac{\constdualnorm n\omega_n^{1/n}}{2}|B_r\cap F_\tau |^{\frac{n-1}{n}}
$$
and 
$$
\affine\Big(\tfrac{r+2\rho_\tau}{\tau}\Big) |B_r\cap F_\tau ^c| = 
\omega_n^{1/n}r\affine\Big(\tfrac{r+2\rho_\tau}{\tau}\Big)|B_r\cap F_\tau ^c|^{\frac{n-1}{n}} \le \frac{\constdualnorm n\omega_n^{1/n}}{2}|B_r\cap F_\tau ^c|^{\frac{n-1}{n}}.
$$
Recalling the inequality $r_\tau<\constantforcingMorrey$ and the estimates  \eqref{napoli_centrale} and \eqref{romano_pizzo},  for a.e. $r\in(0,r_\tau]$  we have
\begin{multline*}
0\le 
\shorthandgeneralfunctional
(F_\tau \setminus B_r;F,\tau) -  
\shorthandgeneralfunctional
(F_\tau ;F,\tau) 
\le 2\Constdualnorm\cH^{n-1}(F_\tau \cap B_r) \\
- \constdualnorm n\omega_n^{1/n}|F_\tau \cap B_r|^{\frac{n-1}{n}} 
+ \frac{\constdualnorm n\omega_n^{1/n}}{2}|F_\tau \cap B_r|^{\frac{n-1}{n}} 
+ \frac{\constdualnorm n\omega_n^{1/n}}{2}|F_\tau \cap B_r|^{\frac{n-1}{n}}.
\end{multline*}
This implies, similarly to \eqref{eq:lower_volume_density_estimate}, 
\begin{equation}\label{eq:lower_volume_density_estimate_bis}
|F_\tau \cap B_r|\ge \Big(\tfrac{3\constdualnorm}{8\Constdualnorm}\Big)^n\omega_n r^n,\quad r\in (0,r_\tau].
\end{equation}
Analogously,
\begin{equation}\label{eq:upper_volume_density_estimate}
|F_\tau ^c\cap B_r|\ge \Big(\tfrac{3\constdualnorm}{8\Constdualnorm}\Big)^n\omega_n r^n,\quad r\in (0,r_\tau].
\end{equation}
From \eqref{eq:lower_volume_density_estimate_bis}, \eqref{eq:upper_volume_density_estimate},
and the 
relative isoperimetric inequality in balls, we obtain 
\begin{equation}\label{eq:lower_perimeter_density_estimate}
P(F_\tau ,B_r) \ge 
\lowerbddensity
r^{n-1},\quad \tau\in (0,\tau_0),
\quad r\in (0,r_\tau),
\end{equation}
with $\lowerbddensity := 2^{\frac{n-1}{n}}\omega_{n-1} 
\Big(\tfrac{\constdualnorm}{8\Constdualnorm}\Big)^{n-1}$. 

\subsection{Existence of GMM}
As already mentioned in the introduction, the proof runs along a well-established path due to Almgren-Taylor-Wang 
\cite{ATW:1993} and 
Luckhaus-Sturzenhecker \cite{LS:1995}; however, here  various nontrivial technical modifications are required mainly due to the presence of $\affine$ (and of $\forcing$).

Let us assume Hypothesis \eqref{hyp:main}. 
Fix $\initialset\in \cS^*,$ and $\tau\in (0,\tau_0)$ 
(recall that $\tau_0$, given in \eqref{eq:definition_of_tau0}
and \eqref{eq:definition_of_tau0_bis}), and using Lemma \ref{lem:existence_minima} define the flat flows $\{E(\tau,k)\}_{k\ge0}$ inductively as follows:
$E(\tau,0)= \initialset$ and 
$$
E(\tau,k) 
\in {\rm argmin}\,
\shorthandgeneralfunctional
(\cdot; E(\tau,k-1),\tau),\quad k\ge1.
$$
From the inequality 
$$
\shorthandgeneralfunctional
(E(\tau,k);E(\tau,k-1),\tau) \le 
\shorthandgeneralfunctional
(E(\tau,k-1);E(\tau,k-1),\tau),\quad k\ge1,
$$
we get the standard estimate
\begin{equation}\label{lucky_factao}
\int_{E(\tau,k)\Delta E(\tau,k-1)} 
\veloc{E(\tau,k-1)}{x}{\tau}~dx \le \prescrmcf_{k-1} - \prescrmcf_k,\quad k\ge1,
\end{equation}
where 
$$
\prescrmcf_k:= \anisotropicperimeter(E(\tau,k)) + \int_{E(\tau,k)} \forcing ~dx,\quad k\ge0, 
$$
and hence the sequence $(\prescrmcf_k)$ is nonincreasing. In particular,
$$
\anisotropicperimeter(E(\tau,k)) = \prescrmcf_k -  \int_{E(\tau,k)} \forcing ~dx \le \prescrmcf_0 + \int_{E(\tau,k)} |\forcing| ~dx 
$$
and therefore,
\begin{equation}\label{unif_per_bounds}
\anisotropicperimeter(E(\tau,k))
\le 
\prescrmcf_0+\int_{\Rn} |\forcing| ~dx :=C_2\quad \text{for all $k\ge0$}.
\end{equation}

Consider the family $\{E(\tau,\intpart{t/\tau})\}_{t\ge0}.$ 
In view of \eqref{unif_per_bounds}, compactness in $BV_\loc(\Rn;\{0,1\})$ and a diagonal argument, we can find 
a sequence $\tau_j\to0^+$ such that for every rational $t\ge0$ there exists a set $E(t)\in BV_\loc(\Rn;\{0,1\})$ such that 
\begin{equation}\label{binorio_tredici}
E(\tau_j,\intpart{t/\tau_j}) \to E(t)\quad\text{in $L_\loc^1(\Rn)$ as $j\to+\infty.$}
\end{equation}
In view of \eqref{unif_per_bounds} and the 
isoperimetric inequality 
\eqref{aniso_isop_ineq}, the measure of $E(\tau_j,\intpart{t/\tau_j})$ is bounded uniformly in $j$ and $t,$ and therefore
$|E(t)|$ is bounded for each rational $t\ge0$, 
in particular $E(t)\in \cS^*.$

Now we will prove that the convergence \eqref{binorio_tredici} holds for any $t\ge0$ (without passing to a further subsequence). To this aim, let us establish a sort of uniform continuity of flat flows $\{E(\tau,k)\}$.

In view of \eqref{eq:lower_perimeter_density_estimate} each $E(\tau,k)$ with $k\ge1$ satisfies a uniform lower perimeter density estimate for radii $r\in (0,r_\tau]$. Thus, we can apply Lemma \ref{lem:volume_distance} 
with %
\begin{equation}\label{eq:r_0=r_tau}
r_0=r_\tau
\end{equation}
and $\lowerbounddensity
=\lowerbddensity$, 
to estimate the measure $
\vert E(\tau,k-1)\Delta E(\tau,k)\vert.$ 
In view of the expression of the ``distance''-term in $\shorthandgeneralfunctional,$ we apply that lemma 
with 
\begin{equation}\label{eq:ell=tau}
\ell=\tau
\end{equation}
 and some $p>0$ to be chosen later (see \eqref{suitable_p}). 

\begin{remark}[\textbf{Necessity of assumption \eqref{lower_bound_r_tau}}]\label{rem:bad_upper_bound}
Assumption \eqref{lower_bound_r_tau} implies 
$$
\limsup_{\tau\to0^+} \frac{\tau}{r_\tau} \in [0,+\infty).
$$
If this limsup is infinite  (for instance, in the case $\affine(r)=e^r$ for large $r,$ see Example \ref{ex:badexample}), an application of Lemma \ref{lem:volume_distance} would give a large coefficient $\frac{\tau^{n-1}}{r_\tau^{n-1}}$ in the first inequality
in estimate \eqref{eq:volume_distance_inequality}, which 
seems hard to handle. To avoid such an issue, we assume the validity of  \eqref{lower_bound_r_tau}, which is used only here. 
\end{remark}

For any $p>0$ and small $\tau>0$ we have by 
\eqref{eq:r_0=r_tau}
\eqref{eq:ell=tau}, 
and \eqref{eq:volume_distance_inequality}, 
$$
|E(\tau,k)\Delta E(\tau,k-1)| \le C_3 p^\sigma \tau 
\anisotropicperimeter(E(\tau,k-1)) + \frac{1}{\affine(p)}\int_{E(\tau,k)\Delta E(\tau,k-1)} \veloc{E(\tau,k-1)}{x}{\tau}~dx
$$
for all $k\ge2,$ where $C_3>0$ and $\sigma\in\{1,n\}.$ By the uniform perimeter bound \eqref{unif_per_bounds} and inequality \eqref{lucky_factao} we can estimate further 
$$
|E(\tau,k)\Delta E(\tau,k-1)| \le C_4 p^\sigma \tau + \frac{\prescrmcf_{k-1} - \prescrmcf_k}{\affine(p)},
$$
where $C_4:=C_2C_3.$
Summing these inequalities in $k=m_1+1,\ldots,m_2$ for 
$1\le m_1<m_2$ we obtain 
\begin{equation}\label{dasfnir4}
|E(\tau,m_2)\Delta E(\tau,m_1)| \le C_4 p^\sigma \tau(m_2-m_1) + 
\frac{\prescrmcf_{m_1} - \prescrmcf_{m_2}}{\affine(p)}.
\end{equation}
Since
\begin{align*}
\prescrmcf_{m_1} - \prescrmcf_{m_2} \le & 
\prescrmcf_0 - \prescrmcf_{m_2} \le \prescrmcf_0 +
\int_{E(\tau,m_2)} \forcing^-(x)~dx \le C_2,
\end{align*}
\eqref{dasfnir4} estimates further as 
\begin{equation}\label{discrete_holder}
|E(\tau,m_1) \Delta E(\tau,m_2)| \le C_4 p^\sigma (m_2-m_1)\tau + \frac{C_2}{\affine(p)}.
\end{equation}

Now we come back to our chosen flat flows $\{E(\tau_j,\intpart{t/\tau_j})\}_{t\ge0}.$ 
Fix any $t_2>t_1>0$ and, for $\tau_0$ as in 
\eqref{eq:definition_of_tau0}, 
\eqref{eq:definition_of_tau0_bis}, 
let $\tau_j\in(0,\tau_0)$ be so small that $\min\{t_1,t_2-t_1\}>10\tau_j.$ 
Now we choose $p.$ Let $u:\R_0^+\to\R_0^+$ be any continuous increasing 
function\footnote{In the case of the 
standard Almgren-Taylor-Wang  functional 
one can choose $u(s)=s^{1/2}$ for any $s \geq 0$ and get the
$1/2$-H\"older continuity of the flow (best exponent).} with $u(0)=0$ and
\begin{equation*} 
\limsup\limits_{s\to0^+} \frac{s}{u(s)} =0.
\end{equation*}
We set 
\begin{equation}\label{suitable_p}
p: =
\begin{cases}
\frac{1}{u(t_2-t_1)^{1/n}} & 
\text{if $\liminf\limits_{j\to+\infty}\frac{r_{\tau_j}}{\tau_j} \in (0, +\infty),$}
\\[2mm]
\frac{1}{u(t_2-t_1)} & \text{if $\liminf\limits_{j\to+\infty}\frac{r_{\tau_j}}{\tau_j}=+\infty.$}
\end{cases}
\end{equation}
Applying \eqref{discrete_holder} with such a $p,$ $m_1=\intpart{t_1/\tau_j},$  $m_2=\intpart{t_2/\tau_j}$ using 
$
\tau_j(m_2-m_1) \le t_2-t_1+\tau_j
$
and recalling $p^\sigma=1/u(t_2-t_1),$ we get 
\begin{equation}\label{disc_holtime}
|E(\tau_j,\intpart{t_2/\tau_j}) \Delta E(\tau_j,\intpart{t_1/\tau_j})| \le 
\modulusofcontinuity(t_2-t_1) + 
\frac{C_4\tau_j}{u(t_2-t_1)},
\end{equation}
where, for $s>0$, 
\begin{equation}\label{def:unif_conto}
\modulusofcontinuity(s)=
\begin{cases}
\frac{C_4s}{u(s)} + 
\frac{C_2}{\affine(u(s)^{-1/n})} & 
\text{if $\liminf\limits_{j\to+\infty}\frac{r_{\tau_j}}{\tau_j} \in (0, +\infty),$}
\\[2mm]
\frac{C_4s}{u(s)} + 
\frac{C_2}{\affine(u(s)^{-1})} & \text{if $\liminf\limits_{j\to+\infty}\frac{r_{\tau_j}}{\tau_j}=+\infty.$}
\end{cases}
\end{equation}
Now if both $t_1$ and $t_2$ are rational, then letting $j\to+\infty$ in \eqref{disc_holtime} and recalling \eqref{binorio_tredici} we deduce 
\begin{equation}\label{uniform_cont_Et}
|E(t_2)\Delta E(t_1)| \le \modulusofcontinuity(t_2-t_1).
\end{equation}
By assumptions on $w$ and $\affine,$ $\modulusofcontinuity(s)\to0$ as $s\to0^+.$ Thus, $\modulusofcontinuity$ provides a 
modulus of continuity, which is  for the moment defined only for rational times. Since 

$$
\sup_{t\ge0,\,\text{rational}}\,\,\,
\anisotropicperimeter(E(t))\le C_2,
$$
by standard arguments, we can uniquely extend $E(\cdot)$ for all $t\ge0$ still satisfying the uniform continuity \eqref{uniform_cont_Et} for all $0<t_1<t_2.$

We claim that $E(\cdot)$ is a GMM, i.e., the convergence \eqref{binorio_tredici} holds for all $t\ge0$.
Indeed, consider an irrational $t,$ and take any rational $\bar t>t.$ 
Then for any open set $U\subset\Rn,$
\begin{multline*}
|U\cap (E(\tau_j,\intpart{t/\tau_j}) \Delta E(t))| 
\le 
|U\cap (E(\tau_j,\intpart{t/\tau_j}) \Delta U\cap (E(\tau_j,\intpart{\bar t/\tau_j}))| \\
+ 
|U\cap (E(\tau_j,\intpart{\bar t/\tau_j}) \Delta E(\bar t))| 
+ |E(\bar t)\Delta E(t)|=: a_j+b_j+c.
\end{multline*}
Since $\bar t$ is rational, by \eqref{binorio_tredici}
$$
\limsup_{j\to+\infty} b_j = 0.
$$
Moreover, by \eqref{disc_holtime} and \eqref{uniform_cont_Et}
$$
\limsup_{j\to+\infty} a_j \le \modulusofcontinuity(\bar t- t)\quad\text{and}\quad 
c\le \modulusofcontinuity(\bar t-t).
$$
Therefore,
$$
\limsup_{j\to+\infty}\,|U\cap (E(\tau_j,\intpart{t/\tau_j}) \Delta E(t))| \le 2 \modulusofcontinuity(\bar t-t).
$$
Now letting $\bar t\searrow t$ and recalling that 
$\modulusofcontinuity(\bar t-t)\to 0$ we deduce the validity of \eqref{binorio_tredici} for all times $t\ge0.$ 
Whence, by definition, $E(\cdot)$ is a GMM.

\begin{remark}\label{rem:arbitrary_sequence}
In the proof of the existence we started with an arbitrary sequence and constructed a subsequence for which the $L_\loc^1$-convergence \eqref{binorio_tredici} holds.
\end{remark}

Finally, let us prove that every $\{E(t)\}_{t\ge0}\in 
\GMM(\shorthandgeneralfunctional,\cS^*,\initialset)$ starting from $\initialset$ is uniformly continuous. Let $\{E(t)\}$ be defined as an $L_\loc^1$-limit of flat flows $\{E(\tau_j,\intpart{t/\tau_j})\}$ for some $\tau_j\to0.$ Then these flat flows necessarily satisfy  \eqref{disc_holtime}, and hence, after passing to the limit we see that $E(\cdot)$ also satisfies \eqref{uniform_cont_Et}.

\subsection{Uniform continuity of GMM up to $t=0$}
Let us assume that $|\p F| = 0$ and let $F_\tau$ be a 
minimizer of $\shorthandgeneralfunctional(\cdot;F,\tau).$ 
By minimality  
\begin{equation}\label{ancha_yomon_bulsin}
\anisotropicperimeter(F_\tau) + \int_{F_\tau\Delta F_0}\veloc{F}{x}{\tau} ~dx + \int_{F_\tau} \forcing ~dx \le 
\anisotropicperimeter(F) + \int_{F} \forcing ~dx,
\end{equation}
and hence, recalling $\forcing \in L^1(\Rn),$ the family $\{\anisotropicperimeter(F_\tau)\}$ is 
uniformly bounded with respect to $\tau$. 
By compactness, for any 
sequence $\tau_k\searrow0,$ $F_{\tau_k}\overset{L_\loc^1}{\to} F_0$ (up to a
not relabelled subsequence) where $F_0\in \cS^*.$ By \eqref{ancha_yomon_bulsin}, the assumption $|\p F| = 0,$ and Fatou's lemma, $|F_0\Delta F|=0,$ and hence, $F_0=F.$ Thus, 
\begin{equation}\label{hohosa}
F_\tau\to F
\end{equation}
in $L_\loc^1(\Rn)$ as $\tau\to0.$
Of course, using \eqref{ancha_yomon_bulsin} one can also show 
$$
\lim\limits_{\tau\to0} \anisotropicperimeter(F_\tau) = 
\anisotropicperimeter(F)\quad\text{and}\quad 
\lim\limits_{\tau\to0} \int_{F_\tau\Delta F_0}\veloc{F}{x}{\tau} dx  = 0.
$$
Now to prove the uniform continuity of GMM for $s,t\ge0$ we proceed as in the standard Almgren-Taylor-Wang case: applying \eqref{disc_holtime} with $t_2=t>0$ and $t_1=\tau_j,$ as well as \eqref{hohosa} for any open set $U\subset\Rn$ we find 
$$
|U\cap (E(\tau_j,\intpart{t/\tau_j}) \Delta \initialset)| \le 
|E(\tau_j,\intpart{t/\tau_j}) \Delta E(\tau_j,1)| + |U\cap (E(\tau_j,1)\Delta \initialset)| 
\le \modulusofcontinuity(t- \tau_j) + o(1)
$$
as $j\to+\infty.$ This implies
$|U\cap (E(t)\Delta \initialset)| \le \modulusofcontinuity(t)$,
and hence, letting $U\to\Rn$ we get $|E(t)\Delta \initialset|  \le \modulusofcontinuity(t).$

\subsection{H\"older continuity of GMM}
Suppose
\begin{equation}\label{holder_fC_5}
\lim_{r\to+\infty}\,\frac{\affine(r)}{r^\DeGiorgiexponent} = C_5
\in (0,+\infty)
\end{equation}
for some $\DeGiorgiexponent\in(0,+\infty).$ Then using \eqref{eq:rho_tau} and recalling $\rho_\tau/\tau\to+\infty$ as $\tau\to0^+$ (see \eqref{amasu87}) we get
$$
\limsup_{\tau\to0^+} \frac{C_1\tau^\DeGiorgiexponent
}{\rho_\tau^{1+\DeGiorgiexponent}}
= \limsup_{\tau\to0^+}\frac{\affine(\rho_\tau/\tau)}{(\rho_\tau/\tau)^\DeGiorgiexponent
} = C_5,
$$
and hence, 
$$
\liminf\limits_{\tau\to0^+}\frac{\rho_\tau}{\tau^{\frac{\DeGiorgiexponent}{1+\DeGiorgiexponent}}} = \frac{C_1^{\frac{1}{1+
\DeGiorgiexponent
}}}{C_5}.
$$
Similarly, as $\frac{r_\tau + 2\rho_\tau}{\tau}\to+\infty$ (see Example \ref{ex:fisaffine}), by \eqref{def_r_tau}
$$
C_5  = \limsup_{\tau\to0^+}\,\frac{\affine((r_\tau + 2\rho_\tau)/\tau)}{[(r_\tau + 2\rho_\tau)/\tau]^\DeGiorgiexponent
} = 
\limsup_{\tau\to0^+}\,\frac{\constnorm n}{2r_\tau [(r_\tau + 2\rho_\tau)/\tau]^\DeGiorgiexponent
}.
$$
Thus, 
$$
\Big(\tfrac{2C_5}{\constnorm n}\Big)^{1/\DeGiorgiexponent
} = \liminf_{\tau\to0} \Big(\frac{r_\tau^{\frac{\DeGiorgiexponent
+1}{\DeGiorgiexponent
}}}{\tau} + \frac{2r_\tau^{\frac{1}{\DeGiorgiexponent
}}\rho_\tau}{\tau}\Big),
$$
and therefore,
$$
C_6:=\liminf\limits_{\tau\to0^+}\frac{r_\tau}{\tau^{\frac{
\DeGiorgiexponent
}{1+\DeGiorgiexponent
}}}<+\infty.
$$
This implies 
$$
\liminf\limits_{\tau\to0^+}\frac{r_\tau}{\tau}=+\infty.
$$
Now choosing $w(s) = s^\frac{1}{1+\DeGiorgiexponent
}$, in \eqref{discrete_holder} we can 
represent the function $\modulusofcontinuity$ in \eqref{uniform_cont_Et} as 
\begin{equation}\label{saz7888}
\modulusofcontinuity
(s) = C_4s^{\frac{\DeGiorgiexponent
}{1+\DeGiorgiexponent
}} + \frac{C_2}{\affine(s^{-\frac{1}{1+\DeGiorgiexponent
}})}.
\end{equation}
In view of \eqref{holder_fC_5} 
and the strict monotonicity of $\affine,$ there exists $s_0>0$ such that 
$$
\affine(s^{-\frac{1}{1+\DeGiorgiexponent
}}) \ge \frac{C_5}{2}\,s^{-\frac{\DeGiorgiexponent
}{1+\DeGiorgiexponent
}}\quad\text{for all $s\in (0,s_0)$}.
$$
Thus, for such $s,$ from \eqref{saz7888} we conclude
$$
\modulusofcontinuity
(s) \le \Big(C_4 + \frac{2C_2}{C_5}\Big)s^{\frac{\DeGiorgiexponent
}{1+\DeGiorgiexponent
}}.
$$
In view of \eqref{uniform_cont_Et} this implies the local $\frac{\DeGiorgiexponent
}{1+\DeGiorgiexponent
}$-continuity of GMM.

\subsection{Boundedness of $\GMM$}
The next lemma shows that boundedness of an initial set implies  
boundedness of 
minimizers of $\shorthandgeneralfunctional$.

\begin{lemma}\label{lem:obsledovanie}
Suppose Hypothesis \eqref{hyp:main}. Then,
for any bounded $F\in \cS^*$ and any $\tau>0$, every minimizer of $\shorthandgeneralfunctional(\cdot;F,\tau)$ is bounded. 
\end{lemma}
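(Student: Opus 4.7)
The plan is to extend the cutting-with-balls argument of Section \ref{subsec:L_infty_bound_for_d_F} to obtain a $\tau$-dependent $L^\infty$-bound on $\d_F$ over $\cl{F_\tau \Delta F}$ valid for \emph{all} $\tau > 0$, not only $\tau < \tau_0$. The restriction $\tau < \tau_0$ in \eqref{eq:definition_of_tau0} was imposed only to guarantee $\rho_\tau \le \constantforcingMorrey$, which is needed exclusively to justify the sharp choice $r = \rho_\tau$ yielding the bound $2\rho_\tau$. For mere boundedness of $F_\tau$, a coarser estimate in terms of $\constantforcingMorrey$ and $\tau$ alone will suffice.

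Concretely, I would apply the derivation \eqref{come_si_llama}--\eqref{nice_estimate_0o09} verbatim to any $x \in \cl{F_\tau} \setminus F$ with $r_x := \d_F(x) > 0$, using $F_\tau \setminus B_r(x)$ as a competitor for $r \in (0, r_x \wedge \constantforcingMorrey)$ so that $B_r(x) \cap F = \emptyset$ and the Morrey-type estimate \eqref{eq:on_existence_of_gammag} applies. This produces
\[
r\, \affine\!\left(\frac{r_x - r}{\tau}\right) \le C_1, \qquad r \in (0, r_x \wedge \constantforcingMorrey),
\]
with $C_1$ as in \eqref{nice_estimate_0o09}, independent of $\tau$. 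The symmetric inequality for $x \in \cl{F} \setminus F_\tau$ follows by using $F_\tau \cup B_r(x)$ as competitor, exactly as in Section \ref{subsec:L_infty_bound_for_d_F}.

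To invert this for $r_x$, I would distinguish two cases. If $r_x \le \constantforcingMorrey$, then $r_x$ is already bounded by the $\tau$-independent quantity $\constantforcingMorrey$. If $r_x > \constantforcingMorrey$, letting $r \to \constantforcingMorrey^-$ and using continuity of $\affine$ yields $\constantforcingMorrey\, \affine((r_x - \constantforcingMorrey)/\tau) \le C_1$, hence by strict monotonicity and surjectivity of $\affine$
\[
r_x \;\le\; \constantforcingMorrey + \tau\, \affine^{-1}\!\left(\frac{C_1}{\constantforcingMorrey}\right) \;=:\; M(\tau).
\]
Consequently $\cl{F_\tau \Delta F} \subset \{x \in \Rn : \d_F(x) \le M(\tau)\}$, which is a bounded $M(\tau)$-neighborhood of $F$ since $F$ itself is bounded. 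Therefore $F_\tau \subset F \cup (F_\tau \Delta F)$ is bounded.

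The main point to verify carefully is that the estimate \eqref{nice_estimate_0o09} genuinely carries over to arbitrary $\tau > 0$ with the same $\tau$-independent constant $C_1$. Inspecting that derivation, the only appearances of $\tau$ are inside $\affine((r_x - r)/\tau)$ and in the structural condition $r < \constantforcingMorrey$, neither of which forces $\tau < \tau_0$; the role of $\tau_0$ is purely to make $\rho_\tau$ an admissible choice of $r$, which we bypass here. Thus Lemma \ref{lem:obsledovanie} follows essentially by rereading Section \ref{subsec:L_infty_bound_for_d_F} with a different, $\tau$-independent, choice of radius.
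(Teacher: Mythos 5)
Your proposal is correct, but it follows a genuinely different route from the paper. You re-run the pointwise clearing-out argument of Section \ref{subsec:L_infty_bound_for_d_F}, observing (correctly) that the constant $C_1$ in \eqref{nice_estimate_0o09} and the restriction $r<\constantforcingMorrey$ are independent of $\tau$, the threshold $\tau_0$ in \eqref{eq:definition_of_tau0} being needed only to make $r=\rho_\tau$ admissible; evaluating \eqref{upper_bound_rx} at the fixed radius $\constantforcingMorrey$ instead gives the explicit bound $\sup_{\cl{F_\tau\Delta F}}\d_F\le \constantforcingMorrey+\tau\,\affine^{-1}\big(C_1/\constantforcingMorrey\big)$ for every $\tau>0$, and boundedness of $F$ then forces boundedness of $F_\tau$ (only the outer case $x\in\cl{F_\tau}\setminus F$ is actually needed for this). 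The paper argues differently: it takes large balls $B_r(0)\supset F$ with $r>r_0$, where $r_0$ is chosen so that $|E\setminus B_r|<\omega_n\constantforcingMorrey^n$ (so the Morrey estimate \eqref{eq:on_existence_of_gammag} is applied to the \emph{tail} $E\setminus B_r$ rather than to small balls), uses $E\cap B_r$ as competitor, and derives the differential inequality $\cH^{n-1}(E\cap\p B_r)\ge \tfrac{3\constnorm n\omega_n^{1/n}}{8\Constnorm}|E\cap B_r^c|^{\frac{n-1}{n}}$ for a.e. $r>r_0$, whose integration contradicts $|E|<+\infty$ unless the tail vanishes for some finite $r$. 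Your approach buys a quantitative, $\tau$-explicit estimate on how far the minimizer can protrude beyond $F$ (a weaker cousin of what Lemma \ref{lem:bounded_wuldf} later provides under the growth assumption \eqref{forcing_linear_growth} on $\forcing^-$), at the price of invoking the monotonicity/surjectivity of $\affine$ to invert it; the paper's proof is softer, yields no rate, but needs only the finiteness of $|E|$ and works without ever inverting $\affine$ or referring to the function $\rho_\tau$.
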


\begin{proof}
Let $E$ be a minimizer of $\shorthandgeneralfunctional(\cdot;F,\tau)$ and let $\constantforcingMorrey>0$ be given by \eqref{eq:on_existence_of_gammag}. Writing $B_r:=B_r(0),$ let $r_0>0$ be such that $F\subset B_{r_0}$ and $|E\setminus B_r|<\omega_n
\constantforcingMorrey^n$ for all $r>r_0$ (the last inequality is possible since $|E|<+\infty$).  For any $r>r_0,$ by the minimality of $E$ and the inequality $F\setminus E = F\setminus [E\cap B_r]$
\begin{multline}\label{ofortuna}
0\le 
\shorthandgeneralfunctional
(E\cap B_r;F,\tau) - \shorthandgeneralfunctional
(E;F,\tau) \\
= \anisotropicperimeter(E\cap B_r) - \anisotropicperimeter(E)
- \int_{E\setminus B_r} \veloc{F}{x}{\tau}dx - \int_{E\setminus B_r} \forcing dx.
\end{multline}
By the standard properties of the reduced boundary, \eqref{norm_bounds} and the Euclidean isoperimetric inequality
\begin{multline*} 
\anisotropicperimeter(E\cap B_r) - 
\anisotropicperimeter(E) = \int_{E\cap \p B_r} \dualnorm(\nu_{B_r}) d\cH^{n-1} - 
\anisotropicperimeter(E,B_r^c)\\
= \int_{E\cap \p B_r} \Big[\dualnorm(\nu_{B_r})+\dualnorm(-\nu_{B_r})\Big] d\cH^{n-1}- 
\anisotropicperimeter(E\setminus B_r)
\le 2\Constnorm \cH^{n-1}(E\cap \p B_r) - \constnorm n\omega_n^{1/n} |E\setminus B_r|
\end{multline*}
for a.e. $r>r_0.$ Moreover, since $\d_F \ge r-r_0$ in $E\setminus B_r$ and $\affine$ is increasing,
$$
\veloc{F}{\cdot}{\tau} \ge \affine\Big(\tfrac{r-r_0}{\tau}\Big)\quad\text{a.e. in $E\setminus B_r$}
$$
and therefore, 
\begin{equation*} 
\int_{E\setminus B_r} \veloc{F}{x}{\tau}dx \ge \affine\Big(\tfrac{r-r_0}{\tau}\Big) |E\setminus B_r|.
\end{equation*}
By the choice of $r_0$ and \eqref{eq:on_existence_of_gammag} 
\begin{equation*} 
- \int_{E\setminus B_r} \forcing dx \le \tfrac{\constnorm n\omega_n^{1/n}}{4}|E\setminus B_r|^{\frac{n-1}{n}}.
\end{equation*}
Inserting the above estimates in \eqref{ofortuna} we find
$$
2 \Constnorm \cH^{n-1}(E\cap B_r) + \tfrac{\constnorm n\omega_n^{1/n}}{4}|E\setminus B_r|^{\frac{n-1}{n}} \ge
\constnorm n\omega_n^{1/n}|E\setminus B_r|^{\frac{n-1}{n}} 
+
\affine\Big(\tfrac{r-r_0}{\tau}\Big) |E\setminus B_r|.
$$
In particular, 
$$
\cH^{n-1}(E\cap B_r^c) \ge \tfrac{3\constnorm n\omega_n^{1/n}}{8\Constnorm} |E\cap B_r^c|^{\frac{n-1}{n}}\quad \text{for a.e. $r>r_0.$}
$$
If $|E\cap B_r^c|>0$ for all $r>0,$ then integrating this differential inequality in $(r_0,r)$ we get
$$
\tfrac{3\constnorm}{8\Constnorm}\,(r-r_0) \le |E\cap B_{r_0}^c|^{1/n} -   |E\cap B_{r}^c|^{1/n}.
$$
However, letting $r\to+\infty$ and using $|E|<+\infty,$ we obtain $|E\cap B_{r_0}^c|=+\infty,$ a contradiction. Thus, $|E\cap B_r^c|=0$ for some $r>r_0,$ i.e., $E\subset B_r.$
\end{proof}

From this lemma, we deduce that each flat flow starting from a bounded set is bounded. 
However, when passing to the limit as $\tau_j\to0,$ we may loose this boundedness, and therefore, we need this lemma in some stronger form and for this we need  stronger assumptions on $\forcing.$
In what follows we write, for $\varrho >0$ and $x \in \Rn$,  
\begin{equation*}
\Wulff_\varrho(x) 
= \{\xi \in \Rn : \norm(\xi-x) < \varrho\},
\qquad
\Wulff_\varrho=
\Wulff_\varrho(0). 
\end{equation*}

\begin{lemma}[\textbf{Growth of $\forcing^-$ controlled by $\affine$}]\label{lem:bounded_wuldf}
Assume Hypothesis \eqref{hyp:main} and that $\forcing$ satisfies  \eqref{forcing_linear_growth}. Then there exist constants $C_6,C_7>0$ depending only on $\constantgminus,\constnorm,\Constnorm$ and $\constantforcingMorrey$ 
(see \eqref{eq:on_existence_of_gammag})
such that the following holds.  For any $F\in \cS^*$ and $\tau>0$ let $F_\tau$ 
be a minimizer of $\shorthandgeneralfunctional(\cdot;F,\tau).$ 
Suppose that $F\subset 
\Wulff_{r_0}$ for some $r_0>C_6.$ Then $F_\tau\subset 
\Wulff_{r_\tau}$ for any $\tau < \frac{1}{2C_7},$ where $r_\tau = (1+C_7\tau)r_0+ C_7\tau.$
\end{lemma}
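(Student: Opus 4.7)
The plan is to adapt the proof of Lemma~\ref{lem:obsledovanie}, now using Wulff shapes $\Wulff_r$ as competitors and exploiting hypothesis \eqref{forcing_linear_growth} to dominate $\forcing^-$ pointwise by the velocity term.

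First I would fix $r>r_0$ and compare $F_\tau$ with the competitor $F_\tau\cap\Wulff_r$. Since $F\subset\Wulff_{r_0}\subset\Wulff_r$, the symmetric-difference terms simplify, and standard anisotropic trace identities for reduced boundaries together with the evenness of $\dualnorm$ reduce the minimality inequality, for a.e.\ $r>r_0$, to
\begin{equation*}
\anisotropicperimeter(F_\tau\setminus\Wulff_r) + \int_{F_\tau\setminus\Wulff_r}\bigl[\veloc{F}{x}{\tau}+\forcing(x)\bigr]\,dx \ \le\ 2\int_{F_\tau\cap\p\Wulff_r}\dualnorm(\nu_{\Wulff_r})\,d\cH^{n-1}\ \le\ 2\Constnorm\,\cH^{n-1}(F_\tau\cap\p\Wulff_r).
\end{equation*}

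The core step is the pointwise inequality $\veloc{F}{x}{\tau}+\forcing(x)\ge 0$ on $F_\tau\setminus\Wulff_r$. From $F\subset\Wulff_{r_0}$ and convexity of $\Wulff_{r_0}$ one has $\d_F(x)\ge\constnorm(\norm(x)-r_0)$ for $x\notin\Wulff_{r_0}$, hence $\veloc{F}{x}{\tau}\ge\affine(\constnorm(r-r_0)/\tau)$. On the other hand, $|x|\le\Constnorm\,\norm(x)$ combined with \eqref{forcing_linear_growth} gives $\forcing^-(x)\le\affine(\constantgminus(1+\Constnorm\,\norm(x)))$, valid once $|x|>\constantgminus$; this is ensured by taking $C_6\ge\constantgminus/\constnorm$. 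Monotonicity of $\affine$ then makes the bracket nonnegative as soon as
\begin{equation*}
\frac{\constnorm\bigl(\norm(x)-r_0\bigr)}{\tau}\ \ge\ \constantgminus\bigl(1+\Constnorm\,\norm(x)\bigr),
\end{equation*}
and solving this linear inequality in $\norm(x)$ (valid for $\tau<\constnorm/(2\constantgminus\Constnorm)$) produces precisely the threshold $r_\tau=(1+C_7\tau)r_0+C_7\tau$, with $C_7$ depending only on $\constnorm,\Constnorm,\constantgminus$. The Morrey constant $\constantforcingMorrey$ enters only when enlarging $C_6$ so that the residual $\forcing$-contributions in the narrow annulus $\Wulff_{r_\tau}\setminus\Wulff_{r_0}$, where the pointwise bound may fail, can be controlled in the style of the estimate preceding \eqref{shze57}.

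For $r\ge r_\tau$ the bracket is thus pointwise nonnegative, and one is left with $\anisotropicperimeter(F_\tau\setminus\Wulff_r)\le 2\Constnorm\,\cH^{n-1}(F_\tau\cap\p\Wulff_r)$. Combining this with the anisotropic isoperimetric inequality \eqref{aniso_isop_ineq} and the coarea formula $-\frac{d}{dr}|F_\tau\setminus\Wulff_r|\ge\constnorm\,\cH^{n-1}(F_\tau\cap\p\Wulff_r)$ (using $|\nabla\norm|\le 1/\constnorm$) yields, for $u(r):=|F_\tau\setminus\Wulff_r|$, the differential inequality
\begin{equation*}
-u'(r)\ \ge\ \frac{\constnorm\,\constiso}{2\Constnorm}\,u(r)^{(n-1)/n},\qquad r\ge r_\tau,
\end{equation*}
which by integration shows that $u$ must vanish at a finite radius, so $F_\tau$ is bounded.

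The main obstacle is to ensure that this extinction occurs \emph{at} $r=r_\tau$ rather than only at some larger, $F$-dependent radius. The resolution is to exploit the strict margin available beyond the threshold: for $r>r_\tau$ the difference $\veloc{F}{x}{\tau}-\forcing^-(x)$ is bounded below by a positive multiple of $\affine(\constnorm(r-r_0)/\tau)$, which re-inserted into the minimality inequality contributes an extra term of order $u(r)/\tau$ to the ODE above and produces rapid decay of $u$ on a length scale $\sim\tau$. The resulting $O(\tau)$ extinction interval past the threshold is then absorbed into the linear factor $C_7\tau$ in the definition of $r_\tau$, at the cost of enlarging $C_7$ (still depending only on the universal constants), yielding $F_\tau\subset\Wulff_{r_\tau}$ as claimed.
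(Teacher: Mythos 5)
Your threshold computation is sound and is essentially the same algebra as the paper's: dominating $\forcing^-$ via \eqref{forcing_linear_growth} and comparing with $\affine$ of the distance term leads to the linear inequality in $\norm(x)$ whose solution is $(1+C_7\tau)r_0+C_7\tau$. The genuine gap is in your last paragraph. Because you keep the trace term $2\Constnorm\,\cH^{n-1}(F_\tau\cap\p\Wulff_r)$ in the minimality inequality, all you obtain for $r$ beyond the threshold is the differential inequality $-u'(r)\gtrsim u(r)^{(n-1)/n}$, whose extinction radius exceeds the threshold by an amount of order $u(r^*)^{1/n}=|F_\tau\setminus\Wulff_{r^*}|^{1/n}$; this quantity depends on $F$ (through $\anisotropicperimeter(F)$, say), not only on $\constantgminus,\constnorm,\Constnorm,\constantforcingMorrey$, so it cannot be absorbed into $C_7\tau$. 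Your proposed repair does not close this: the margin $\affine\big(\constnorm(\norm(x)-r_0)/\tau\big)-\affine\big(\constantgminus(1+\Constnorm\norm(x))\big)$ tends to $0$ as $\norm(x)$ decreases to the threshold (the two arguments coincide there), so it is \emph{not} bounded below by a positive multiple of $\affine(\constnorm(r-r_0)/\tau)$; moreover, near the threshold $r-r_0\sim\tau(r_0+1)$, so even where a margin exists it is $\affine$ of a bounded quantity, not of order $1/\tau$. Finally, a term linear in $u$ only yields exponential decay, never extinction in a universal interval: the extinction length still carries a factor $\log u(r^*)$, hence an $F$-dependence incompatible with the statement.

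The missing idea, which is how the paper argues, is to exploit convexity of the Wulff shape so that the perimeter term disappears altogether: by Lemma \ref{lem:obsledovanie} the minimizer $F_\tau$ is bounded, one takes $\Wulff_{r_\tau}$ to be the \emph{smallest} Wulff shape containing $F_\tau$ and compares with $F_\tau\cap\Wulff_{r_\tau-\epsilon}$, for which $\anisotropicperimeter(F_\tau\cap\Wulff_{r_\tau-\epsilon})\le\anisotropicperimeter(F_\tau)$. Since $|F_\tau\setminus\Wulff_{r_\tau-\epsilon}|>0$ by minimality of $r_\tau$, the remaining bulk comparison divides out the volume and yields directly the algebraic inequality $\frac{r_\tau-r_0}{\Constnorm\tau}\le\constantgminus+\frac{\constantgminus}{\constnorm}r_\tau$, which is then solved for $r_\tau$; no isoperimetric inequality, coarea argument, or ODE is needed, and no extinction interval has to be absorbed. (Incidentally, $\constantforcingMorrey$ enters only through Lemma \ref{lem:obsledovanie}, not through a separate control of an annulus $\Wulff_{r_\tau}\setminus\Wulff_{r_0}$ as you suggest.)
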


\begin{proof}
By \eqref{norm_bounds}, \eqref{forcing_linear_growth} 
and the strict monotonicity of $\affine$
\begin{equation}\label{new_condos_H}
\forcing^-(x) \le \affine\Big(
\constantgminus
 + \tfrac{\constantgminus}{\constnorm}\norm(x)\Big),\quad \norm(x)>\tfrac{\constantgminus}{\constnorm}.
\end{equation}
Let $F\subset \Wulff_{r_0}$ for some $r_0>C_6:=\constantgminus/\constnorm;$ by Lemma \ref{lem:obsledovanie}  a minimizer $F_\tau$ is bounded, and let $\Wulff_{r_\tau}$ be the smallest Wulff shape containing $F_\tau.$ Thus, $\p F_\tau\cap \p \Wulff_{r_\tau}
\ne \emptyset.$
We may assume $r_\tau>r_0.$ 

Fix small $\epsilon\in (0, r_\tau-r_0)$ and consider the difference
\begin{multline}\label{increadible0}
0\le \shorthandgeneralfunctional
(F_\tau\cap \Wulff_{r_\tau-\epsilon}; F,\tau) - 
\shorthandgeneralfunctional
(F_\tau;F,\tau) = 
\anisotropicperimeter(F_\tau \cap \Wulff_{r_\tau -\epsilon}) - 
\anisotropicperimeter(F_\tau)\\
- \int_{F_\tau \setminus \Wulff_{r_\tau}} \veloc{F}{x}{,\tau} dx -\int_{F_\tau\setminus \Wulff_{r_\tau-\epsilon}} \forcing\,dx.
\end{multline}
By \eqref{norm_bounds} $\d_{F}(x) \ge \frac{1}{\Constnorm}\d_{F}^\norm(x) \ge \frac{r_\tau -\epsilon-r_0}{\Constnorm}$ in $F_\tau \setminus \Wulff_{r_\tau - \epsilon},$ where 
$$
\d_{F}^\norm(x):=\inf\{\norm(x-y):\,\,y\in \p^* F\}
$$
is the anisotropic distance. Thus, by the monotonicity of $\affine,$
$$
-\int_{F_\tau \setminus \Wulff_{r_\tau}} \veloc{F}{x}{\tau} dx \le -\affine\Big(\frac{r_\tau -\epsilon-r_0}{\Constnorm\tau}\Big)\,|F_\tau \setminus 
\Wulff_{r_\tau -\epsilon}|.
$$
In view of \eqref{new_condos_H}, 
using $r_0<r_\tau - \epsilon <\norm(\cdot)\le r_\tau $ in $E\setminus 
\Wulff_{r_\tau -\epsilon}$ and assumption \eqref{new_condos_H} we have 
\begin{multline*}
- \int_{F_\tau \setminus \Wulff_{r_\tau - \epsilon}} \forcing  dx \le 
\int_{F_\tau \setminus \Wulff_{r_\tau-\epsilon}} \forcing^- dx \\
\le 
\int_{F_\tau \setminus \Wulff_{r_\tau-\epsilon}} \affine\Big(\constantgminus
 + \tfrac{\constantgminus
}{\constnorm}\norm(x)\Big)dx
\le \affine\Big(\constantgminus
 + \tfrac{\constantgminus
}{\constnorm}r_\tau \Big)|E\setminus \Wulff_{r_\tau - \epsilon}|.
\end{multline*}
Finally, by the convexity of $\Wulff_{r_\tau-\epsilon},$
$$
\anisotropicperimeter(F_\tau \cap \Wulff_{r_\tau -\epsilon}) - 
\anisotropicperimeter(F_\tau) \le 0.
$$
Inserting these estimates in \eqref{increadible0} we get 
$$
0\le -\affine\Big(\frac{r_\tau-\epsilon-r_0}{\Constnorm\tau}\Big) + \affine\Big(\constantgminus
 + \tfrac{\constantgminus
}{\constnorm}r_\tau \Big),
$$
thus, by the strict monotonicity of $\affine$ and the arbitrariness of $\epsilon$ we get 
$$
\frac{r_\tau - r_0}{\Constnorm\tau}\le \constantgminus
 + \frac{\constantgminus
}{\constnorm}r_\tau.
$$
Now assuming $\frac{\constantgminus
\Constnorm}{\constnorm}\tau<\frac{1}{2}$ we can write 
$$
r_\tau \le \frac{r_0 + \constantgminus
\Constnorm\tau}{1 - \frac{\constantgminus
\Constnorm}{\constnorm}\tau} \le \Big(1 + \frac{2\constantgminus
\Constnorm}{\constnorm}\tau\Big)r_0 + \constantgminus
\Constnorm\tau\Big(1 + \frac{2\constantgminus
\Constnorm}{\constnorm}\tau\Big)\le (1+C_7\tau)r_0 + C_7\tau,
$$
where  $C_7:=\max\{\frac{2\constantgminus
\Constnorm}{\constnorm}, 2\constantgminus
\Constnorm\}.$
\end{proof}

Now we prove the local uniform boundedness of 
GMM, as stated in (ii). Fix $\tau\in(0,\frac{1}{2C_7})$ and a bounded $\initialset
\in\cS^*,$ and let  $\{E(\tau,k)\}$ be a flat flow starting from $\initialset.$ We may assume $\initialset\subset \Wulff_{r_0}$ for some $r_0>C_6.$
Let $\{r(\tau,k)\}_k$ be a nondecreasing sequence such that $r(\tau,0)=r_0,$ $E(\tau,k)\subset \Wulff_{r(\tau,k)}$ and for any $k\ge1$ either $r(\tau,k)=r(\tau,k-1),$ or by Lemma \ref{lem:bounded_wuldf} 
$r(\tau,k)\le (1+C_7\tau)r(\tau,k-1) + C_7\tau.$
Thus, there is no loss of generality in assuming 
$$
r(\tau,k)\le (1+C_7\tau)r(\tau,k-1) + C_7\tau,\quad k\ge1.
$$
Then\footnote{
Let $A\ge1,$ $B\ge0 $ and $\{\alpha_k\}_{k\ge0}\subset\R_0^+$ be such that 
$\alpha_{k+1} \le A\alpha_k +B$ for any $k\ge0$. 
Then
$\alpha_m \le 
A^m\alpha_0 + B\,\frac{A^m-1}{A-1}$
for any $m\ge1$.
Indeed
\begin{align*}
\alpha_m \le & A\alpha_{m-1} + B\le 
A(A\alpha_{m-2} + B) +B = A^2\alpha_{m-2} + B(1+A)\\
\le & \ldots \le A^m\alpha_0 + B(1+A+\ldots+A^{m-1}) = 
A^m\alpha_0 + B\,\frac{A^m-1}{A-1}.
\end{align*}

}
$$
r(\tau,k) \le (1+C_7\tau)^kr_0 + C_7\tau\,\frac{(1+C_7\tau)^k-1}{C_7\tau} < (1+C_7\tau)^k(r_0+1).
$$
Thus, for any $t\ge0$
\begin{align*}
r(\tau,\intpart{t/\tau}) \le (1+C_7\tau)^{\intpart{t/\tau}}(r_0+1)
\le
(1+C_7\tau)^{t/\tau}(r_0+1) \le e^{C_7t}(r_0+1) = :R(t).
\end{align*} 
This implies $E(\tau,\intpart{t/\tau})\subset \Wulff_{R(t)}$ 
for all $t\ge0$ and therefore, for any $E(\cdot)\in \GMM(\shorthandgeneralfunctional,\cS^*,\initialset)$ we have $E(t)\subset \Wulff_{R(t)}.$

\section{Rescalings of $\GMM$ and comparison with balls}
\label{sec:rescalings_of_GMM_and_comparison_with_balls}
Let us study how minimizers 
are related in rescaling. 

\begin{lemma}[\textbf{Rescaling}]
Assume that $\affine$ satisfies (\ref{hyp:main}a) and (\ref{hyp:main}b), and $\forcing\equiv0.$ 
Suppose that $\initialset\in \cS^*$ contains the origin in its interior and 
$\scalefactor>0$. Then $\scalefactor E(\cdot) \in 
\GMM(\shorthandgeneralfunctional, \cS^*,\scalefactor
\initialset)$ 
if and only if $E(\cdot) \in \GMM(\sF_{\norm, \scalefactor
\affine, 0},\cS^*,\initialset).$ 
\end{lemma}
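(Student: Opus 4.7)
The plan is to exploit the invariance (up to rescaling) of the Almgren-Taylor-Wang-type functional $\shorthandgeneralfunctional$ under the homothety $x\mapsto \scalefactor x$, propagate this correspondence through the discrete minimizing-movements scheme, and finally pass to the limit $\tau\to 0^+$.

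First, I would establish a scaling identity at the level of the single-step minimization. Using the $(n-1)$-homogeneity of the anisotropic perimeter $\anisotropicperimeter(\scalefactor E)=\scalefactor^{n-1}\anisotropicperimeter(E)$, the identity $\d_{\scalefactor F}(\scalefactor x)=\scalefactor \d_F(x)$ for the distance function, and the change of variables $y=\scalefactor x$ in the volume integral (producing a factor $\scalefactor^n$), I would compute
$$
\shorthandgeneralfunctional(\scalefactor E;\scalefactor F,\tau) = \scalefactor^{n-1}\Big(\anisotropicperimeter(E) + \scalefactor\int_{E\Delta F}\affine\Big(\tfrac{\scalefactor\,\d_F(x)}{\tau}\Big)\,\d x\Big).
$$
Since the positive constant $\scalefactor^{n-1}$ does not affect the argmin, this identifies the minimizers of $\shorthandgeneralfunctional(\cdot;\scalefactor F,\tau)$ in $\cS^*$ with the $\scalefactor$-dilates of the minimizers of $\sF_{\norm,\scalefactor\affine,0}(\cdot;F,\cdot)$, once the time step is chosen so that the arguments of $\affine$ match. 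Existence of minimizers on both sides is already guaranteed by Lemma \ref{lem:existence_minima}.

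Second, by induction on $k\ge 0$ the one-step correspondence propagates to the whole discrete scheme: the $k$-th iterate of the flat flow for $\shorthandgeneralfunctional$ starting from $\scalefactor\initialset$ is precisely $\scalefactor$ times the $k$-th iterate of the flat flow for $\sF_{\norm,\scalefactor\affine,0}$ starting from $\initialset$, at the matched time step. The base case $k=0$ follows from $\scalefactor\initialset=\scalefactor\cdot\initialset$; the inductive step follows from the first point applied to $F=E(\tau,k-1)$.

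Third, I would pass to the limit $\tau_j\to 0^+$. Since the homothety $x\mapsto \scalefactor x$ is a bi-Lipschitz homeomorphism of $\Rn$ preserving $L^1_\loc$-convergence (up to the constant factor $\scalefactor^n$), the convergence of flat flows on one side is equivalent to the $\scalefactor$-dilated convergence on the other, and the limits are related by the homothety. The main technical point will be the precise bookkeeping of the time step and the index $\intpart{t/\tau}$ under the rescaling, and the guarantee that $L^1_\loc$-convergence at every $t\ge 0$ (not only along a subsequence of times) is preserved; here the uniform time-continuity of the GMM established in Theorem \ref{teo:existence_gmm} allows us to upgrade convergence along one scale to convergence along the rescaled one, closing the equivalence.
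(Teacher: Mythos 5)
Your strategy (one-step scaling identity, induction over the scheme, passage to the limit) mirrors the paper's proof, but there is a genuine gap at the point where you ``close the equivalence''. Your scaling identity is the honest change of variables: since $\sd_{\scalefactor F}(\scalefactor x)=\scalefactor\,\sd_F(x)$, the dilated functional at time step $\tau$ equals $\scalefactor^{n-1}\big(\anisotropicperimeter(E)+\scalefactor\int_{E\Delta F}\affine(\scalefactor\,\d_F/\tau)\,dx\big)$, and to read the bracket as $\sF_{\norm,\scalefactor\affine,0}(E;F,\cdot)$ you are forced to change the time step to $\tau/\scalefactor$. Note that this disagrees with the paper's identity \eqref{advokat_navalni}, which keeps the \emph{same} step $\tau$ on both sides (the factor $\scalefactor$ inside the argument of $\affine$ does not appear there); with the paper's identity the flat flows correspond index by index at the same $\tau$ and the stated GMM correspondence is immediate, with no time bookkeeping, whereas with your (correct) bookkeeping an identification at equal steps $\tau$ would instead produce the functional built from the modified nonlinearity $r\mapsto\scalefactor\affine(\scalefactor r)$, not from $\scalefactor\affine$. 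You should make explicit which of the two computations you stand by, because they lead to different conclusions.

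The gap itself: with your matched steps $\tau'=\tau/\scalefactor$ the discrete correspondence reads $E_{\shorthandgeneralfunctional}(\tau,\intpart{t/\tau})=\scalefactor\,E'(\tau/\scalefactor,\intpart{(t/\scalefactor)/(\tau/\scalefactor)})$, so letting $\tau_j\to0^+$ you obtain: $\scalefactor E(t/\scalefactor)$ belongs to $\GMM(\shorthandgeneralfunctional,\cS^*,\scalefactor\initialset)$ if and only if $E(\cdot)\in\GMM(\sF_{\norm,\scalefactor\affine,0},\cS^*,\initialset)$ --- a time-reparametrized version of the statement, not the statement itself. The appeal to the uniform time-continuity of Theorem \ref{teo:existence_gmm} cannot repair this: continuity controls $|E(t)\Delta E(s)|$ for $|t-s|$ small and is useful to identify limits of nearby discrete times, but here the two candidate limits $t\mapsto\scalefactor E(t)$ and $t\mapsto\scalefactor E(t/\scalefactor)$ differ by a fixed time rescaling that does not vanish as $\tau_j\to0^+$ (for shrinking balls, say, these are genuinely different evolutions). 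So, as written, your argument does not prove the lemma as phrased; you must either carry out the equal-time-step identification (and then state the result with the nonlinearity $r\mapsto\scalefactor\affine(\scalefactor r)$, reconciling it with \eqref{advokat_navalni}), or keep your matched steps and state the conclusion with the time change $t\mapsto t/\scalefactor$.
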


\begin{proof}
As usual, $\shorthandgeneralfunctional = \sF_{\norm,\affine,0}$.
Since
\begin{multline}\label{advokat_navalni}
\shorthandgeneralfunctional 
(\scalefactor G;\scalefactor
 \initialset,\tau) = 
\anisotropicperimeter(\scalefactor
 G) + \int_{\scalefactor
 G}\sveloc{\scalefactor
 \initialset}{x}{\tau}~dx \\
= 
\scalefactor^{n-1}\anisotropicperimeter(G) + \scalefactor^n\int_{G}\sveloc{\initialset}{x}{\tau}~dx = \scalefactor^{n-1}\sF_{\norm,\scalefactor \affine,0}
(G;\initialset,\tau),
\end{multline}
$\scalefactor G$ is a minimizer of $\shorthandgeneralfunctional 
(\cdot;\scalefactor \initialset,\tau)$ 
if and only if $G$ is a minimizer of 
$\sF_{\norm,\scalefactor \affine,0}
(\cdot;\initialset,\tau).$ 

Let $\{\scalefactor E(\tau,k)\}$ be flat flows starting from $\scalefactor
 \initialset,$  
associated to $\shorthandgeneralfunctional.$ 
Then by \eqref{advokat_navalni},
the family $\{E(\tau,k)\}$ is a flat flow starting 
from $\initialset,$ associated to $\sF_{\norm,\scalefactor
 \affine,0}.$
This implies the thesis.
\end{proof}

\begin{corollary}[\textbf{Power case}]\label{cor:power_law_case}
Assume $\forcing \equiv 0$, that 
$$
\affine(r) = r^\DeGiorgiexponent,\quad r\ge0,
$$
for some $\DeGiorgiexponent >0,$ and $\initialset\in \cS^*$ contains the origin in its interior. Then $E(\cdot)
\in \GMM(\shorthandgeneralfunctional,\cS^*,\initialset)$ 
if and only if $\scalefactor E(t\scalefactor^{-1/\DeGiorgiexponent })
\in \GMM(\shorthandgeneralfunctional,\cS^*, \scalefactor
 \initialset).$
\end{corollary}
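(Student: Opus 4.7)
The plan is to combine the previous rescaling lemma (which handles spatial scaling but transfers the problem from the functional $\shorthandgeneralfunctional$ to $\sF_{\norm,\scalefactor\affine,0}$) with the homogeneity of $\affine(r)=r^\DeGiorgiexponent$ to produce a pure time rescaling.

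First I would observe the elementary identity
\begin{equation*}
\scalefactor \affine(r) \;=\; \scalefactor r^\DeGiorgiexponent \;=\; (\scalefactor^{1/\DeGiorgiexponent} r)^\DeGiorgiexponent \;=\; \affine\!\left(\scalefactor^{1/\DeGiorgiexponent} r\right),\qquad r\ge 0,
\end{equation*}
which, applied to $r=\sd_F(x)/\tau$, gives
\begin{equation*}
\sF_{\norm,\scalefactor\affine,0}(E;F,\tau) \;=\; \anisotropicperimeter(E) + \int_{E\Delta F}\affine\!\left(\tfrac{\d_F(x)}{\tau\scalefactor^{-1/\DeGiorgiexponent}}\right)dx \;=\; \shorthandgeneralfunctional\!\left(E;F,\tau\scalefactor^{-1/\DeGiorgiexponent}\right).
\end{equation*}
Consequently, a family $\{E(\tau,k)\}_{k\ge 0}$ is a flat flow of $\sF_{\norm,\scalefactor\affine,0}$ with step $\tau$ starting from $\initialset$ if and only if it is a flat flow of $\shorthandgeneralfunctional$ with step $\tilde\tau:=\tau\scalefactor^{-1/\DeGiorgiexponent}$ starting from $\initialset$.

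Next I would perform the time bookkeeping: given a sequence $\tau_j\to 0^+$ and flat flows $\{E(\tau_j,k)\}$, set $\tilde\tau_j:=\tau_j\scalefactor^{-1/\DeGiorgiexponent}$, which still tends to $0^+$. For any $t\ge 0$, writing $s=t\scalefactor^{-1/\DeGiorgiexponent}$, we have $\intpart{t/\tau_j}=\intpart{s/\tilde\tau_j}$, so
\begin{equation*}
E(\tau_j,\intpart{t/\tau_j}) \;\xrightarrow{L^1_\loc}\; E(t) \quad\Longleftrightarrow\quad E(\tau_j,\intpart{s/\tilde\tau_j})\;\xrightarrow{L^1_\loc}\; E(\scalefactor^{1/\DeGiorgiexponent}s).
\end{equation*}
Combining this with the identification of the two families of flat flows, we conclude
\begin{equation*}
E(\cdot)\in \GMM(\sF_{\norm,\scalefactor\affine,0},\cS^*,\initialset) \;\Longleftrightarrow\; E(\scalefactor^{1/\DeGiorgiexponent}\,\cdot)\in \GMM(\shorthandgeneralfunctional,\cS^*,\initialset).
\end{equation*}

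Finally, I would invoke the preceding rescaling lemma with $\initialset$ in place of $\scalefactor\initialset$: $\scalefactor E(\cdot)\in \GMM(\shorthandgeneralfunctional,\cS^*,\scalefactor\initialset)$ iff $E(\cdot)\in \GMM(\sF_{\norm,\scalefactor\affine,0},\cS^*,\initialset)$. Chaining with the previous equivalence yields
\begin{equation*}
\scalefactor E(\cdot)\in \GMM(\shorthandgeneralfunctional,\cS^*,\scalefactor\initialset) \;\Longleftrightarrow\; E(\scalefactor^{1/\DeGiorgiexponent}\,\cdot)\in \GMM(\shorthandgeneralfunctional,\cS^*,\initialset),
\end{equation*}
which, after relabeling $\tilde E(t):=E(\scalefactor^{1/\DeGiorgiexponent}t)$ (so $E(t)=\tilde E(t\scalefactor^{-1/\DeGiorgiexponent})$), is exactly the claimed equivalence. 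There is no substantive obstacle; the only point requiring care is the floor-function identity $\intpart{t/\tau_j}=\intpart{(t\scalefactor^{-1/\DeGiorgiexponent})/\tilde\tau_j}$, which holds exactly because the time rescaling factor is the same constant $\scalefactor^{-1/\DeGiorgiexponent}$ applied to both numerator and denominator.
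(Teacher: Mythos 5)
Your proof is correct and takes essentially the same route as the paper: both use the homogeneity identity $\scalefactor r^{\DeGiorgiexponent}=(\scalefactor^{1/\DeGiorgiexponent}r)^{\DeGiorgiexponent}$ to turn the factor $\scalefactor$ on $\affine$ into the time-step rescaling $\tau\mapsto\tau\scalefactor^{-1/\DeGiorgiexponent}$, combine this with the Rescaling Lemma, and conclude with the same floor-function reindexing of the flat flows. The only cosmetic difference is that you invoke the Rescaling Lemma as a black box, whereas the paper reruns its computation with the power-law $\affine$ inserted.
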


\begin{proof}
Since $\scalefactor \affine(r) = \affine(\scalefactor^{1/\DeGiorgiexponent } r),$ the equality  \eqref{advokat_navalni} 
becomes
$$
\shorthandgeneralfunctional(\scalefactor
 G; \scalefactor
 \initialset,\tau) = \scalefactor^{n-1}
\anisotropicperimeter(G) + \scalefactor^{n-1} \int_G 
\sveloc{\initialset}{x}{\tau \scalefactor^{-1/\DeGiorgiexponent }} ~dx
= \shorthandgeneralfunctional 
(G; \initialset,\tau\scalefactor^{-1/\DeGiorgiexponent }).
$$
Thus $\scalefactor G$ is a minimizer of $\shorthandgeneralfunctional 
(\cdot;\scalefactor \initialset,\tau)$ if and only if $G$ is a minimizer of 
$\shorthandgeneralfunctional 
(\cdot;\initialset,\tau \scalefactor^{-1/\DeGiorgiexponent }).$ 

Now if $\tau_j\to0$ and flat flows $\scalefactor E(\tau_j,k),$ starting from $\scalefactor
 \initialset,$ 
associated to $\sF_{\norm,\affine,0}$ and with the time step $\tau,$ are 
such that 
\begin{equation}\label{jahon_matbuoti}
\lim\limits_{j\to+\infty} \scalefactor
 E(\tau_j,\intpart{t/\tau_j}) = \scalefactor
 E(t)\quad\text{for all $t\ge0$ in $L_\loc^1(\Rn).$}
\end{equation}
Then $E(\tau_j,k)$ are flat flows starting from $\initialset,$ 
associated to $\sF_{\norm,\affine,0},$ but with the time step equal to 
$\tau\scalefactor^{-1/\DeGiorgiexponent }.$ Thus, by \eqref{jahon_matbuoti}
\begin{equation*} 
\lim\limits_{j\to+\infty}   E(\tau_j,\intpart{t/(\scalefactor^{-1/\DeGiorgiexponent }\tau_j)}) = \lim\limits_{j\to+\infty} E(\tau_j,\intpart{t
\scalefactor^{1/\DeGiorgiexponent}/\tau_j})  =  E(t\scalefactor^{1/\DeGiorgiexponent })\quad\text{for all $t\ge0$ in $L_\loc^1(\Rn).$}
\end{equation*}

The converse  assertion is done in a similar manner.
See also \cite{BCChN:2005}.
\end{proof}

\begin{theorem}[\textbf{Comparison with balls}]\label{teo:comparison_with_balls}
Suppose Hypothesis \eqref{hyp:main} and $\forcing\in L^\infty(\Rn).$
Given $\initialset\in \cS^*$ and $\tau>0,$ let  $\{E(\tau,k)\}$ be flat flows starting from $\initialset.$  
Let $r_0>0$. 
Then there exist $\widehat \tau_1>0$ and $C_8>0$ 
depending only on 
$n,$ $\norm,$ $\affine$, $r_0$ and $\|\forcing\|_\infty$,
such that  
\begin{equation}\label{wulff_inside}
\Wulff_{r_0}(x_0)\subset \initialset
\quad\Longrightarrow \quad \Wulff_{r_0-C_8k\tau}(x_0) \subset E(\tau,k) 
\end{equation}
and 
\begin{equation}\label{wulff_outside}
\Wulff_{r_0}(x_0)\cap \initialset
 = \emptyset \quad\Longrightarrow \quad \Wulff_{r_0-C_8k\tau} (x_0) \cap E(\tau,k) =\emptyset
\end{equation}
for all $\tau\in (0,\widehat\tau_1)$ and $0\le k\tau \le \frac{r_0}{2C_8}.$
\end{theorem}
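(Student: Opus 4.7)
The proof plan combines a submodularity-based discrete comparison principle with a direct Wulff-shape barrier computation, iterated inductively. First, I would establish the following discrete comparison principle: whenever $F_1 \subset F_2$ lie in $\cS^*$ and $\widetilde E_2$ is any minimizer of $\shorthandgeneralfunctional(\cdot; F_2, \tau)$, one can exhibit a minimizer $\widetilde E_1$ of $\shorthandgeneralfunctional(\cdot; F_1, \tau)$ with $\widetilde E_1 \subset \widetilde E_2$. Using \eqref{eq:prescur_funco} to recast $\shorthandgeneralfunctional$ as a prescribed-mean-curvature functional $E \mapsto \anisotropicperimeter(E) + \int_E [\affine(\sd_F/\tau) + \forcing]\,dx + \constantprescribed$, the pointwise inequality $\sd_{F_1} \ge \sd_{F_2}$ coming from $F_1 \subset F_2$, combined with monotonicity of $\affine$ and submodularity of $\anisotropicperimeter$, yields
\[\shorthandgeneralfunctional(\widetilde E_1 \cap \widetilde E_2; F_1, \tau) + \shorthandgeneralfunctional(\widetilde E_1 \cup \widetilde E_2; F_2, \tau) \le \shorthandgeneralfunctional(\widetilde E_1; F_1, \tau) + \shorthandgeneralfunctional(\widetilde E_2; F_2, \tau).\]
By minimality equality holds, so $\widetilde E_1 \cap \widetilde E_2$ is the desired nested minimizer.

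The heart of the argument is the \emph{Wulff-shape barrier lemma}: there exists $C_8>0$ depending only on the stated data such that if $\Wulff_r(x_0) \subset F$ with $r \ge r_0/2$, then every minimizer $E$ of $\shorthandgeneralfunctional(\cdot; F, \tau)$ contains $\Wulff_{r-C_8\tau}(x_0)$. I would argue by contradiction: writing $r' := r - C_8\tau$ and $D := \Wulff_{r'}(x_0) \setminus E$, use $E \cup \Wulff_{r'}(x_0)$ as competitor. Minimality gives
\[0 \le [\anisotropicperimeter(E \cup \Wulff_{r'}) - \anisotropicperimeter(E)] + \int_D \affine(\sd_F/\tau)\,dx + \int_D \forcing\,dx.\]
On $\Wulff_{r'}(x_0) \subset F$ the inclusion forces $\sd_F \le -c_1 C_8 \tau$ for a constant $c_1>0$ depending only on $\norm$ (governing the ratio of Euclidean to $\norm$-distance), so oddness and monotonicity of $\affine$ give $\int_D \affine(\sd_F/\tau)\,dx \le -\affine(c_1 C_8)|D|$. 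For the perimeter, submodularity of $\anisotropicperimeter$ combined with the Wulff isoperimetric identity $\anisotropicperimeter(\Wulff_\rho) = \constiso|\Wulff_\rho|^{(n-1)/n}$ and the elementary concavity bound $1 - (1-x)^{(n-1)/n} \le x$ yield the crucial linear-in-$|D|$ estimate $\anisotropicperimeter(E \cup \Wulff_{r'}) - \anisotropicperimeter(E) \le n|D|/r'$. Combining these gives $0 \le [n/r' + \|\forcing\|_\infty - \affine(c_1 C_8)]|D|$, which for $C_8 = c_1^{-1}\affine^{-1}(2n/r_0 + \|\forcing\|_\infty + 1)$ (and $r' \ge r_0/2$) forces $|D| = 0$.

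The inclusion \eqref{wulff_inside} now follows by induction on $k$: assuming $\Wulff_{r_0 - C_8 k\tau}(x_0) \subset E(\tau,k)$, apply the barrier lemma with $F = E(\tau,k)$ and $r = r_0 - C_8 k\tau$ (still $\ge r_0/2$ while $k\tau \le r_0/(2C_8)$) to obtain $\Wulff_{r_0 - C_8(k+1)\tau}(x_0) \subset E(\tau,k+1)$. The exterior claim \eqref{wulff_outside} is proven by a dual argument: if $F \cap \Wulff_r(x_0) = \emptyset$, then $\sd_F \ge c_1 C_8 \tau$ on $\Wulff_{r'}(x_0)$, forcing $\affine(\sd_F/\tau) \ge \affine(c_1 C_8)$; using $E \setminus \Wulff_{r'}(x_0)$ as competitor, the analogous contradiction arises upon controlling the perimeter variation via a relative Wulff isoperimetric inequality in $\Wulff_{r'}(x_0)$. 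The threshold $\widehat\tau_1$ is then chosen small enough that $C_8\widehat\tau_1 < r_0/2$ and that the density threshold $\rho_\tau$ from \eqref{eq:rho_tau} satisfies $\rho_\tau < r_0$.

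The main obstacle is the sharp linear-in-$|D|$ perimeter bound in the barrier lemma: a crude bound of the form $\anisotropicperimeter(E \cup \Wulff_{r'}) - \anisotropicperimeter(E) \le \anisotropicperimeter(\Wulff_{r'})$ loses the factor $|D|$ and cannot be absorbed by the distance integral. The required linear estimate hinges on combining Wulff isoperimetricity with submodularity, treating separately the cases $|D| \le |\Wulff_{r'}|/2$ (where the concavity bound applies sharply) and $|D| > |\Wulff_{r'}|/2$ (where $\anisotropicperimeter(\Wulff_{r'}) \le 2n|D|/r'$ is used directly). In the exterior case the corresponding perimeter estimate is technically more delicate because the boundary contribution along $E^{(1)} \cap \p \Wulff_{r'}(x_0)$ enters with the unfavorable sign and must be absorbed by a relative isoperimetric inequality inside $\Wulff_{r'}(x_0)$.
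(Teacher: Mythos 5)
Your proposal is correct and its key single--step computation is essentially the paper's: the competitor $E\cup \Wulff_{r'}(x_0)$, the lower bound $\sd_F\le -C_8\tau/\Constnorm$ coming from \eqref{norm_bounds}, the absorption of $\forcing$ through $\|\forcing\|_\infty$, and the linear--in--volume perimeter bound obtained from the Wulff isoperimetric property together with $1-(1-x)^{(n-1)/n}\le x$ (this is exactly \eqref{eq:estimate_of_perimeter}, and indeed $\constiso|\unitballnorm|^{-1/n}=n$). The organization, however, is genuinely different. The paper first uses the $L^\infty$--bound \eqref{L_cheksiz_baho} to guarantee $\Wulff_{4r_0/5}\subset E_\tau$, then estimates from below the \emph{maximal} inscribed Wulff radius $\bar r$ of a single minimizer, and finally iterates not on $E(\tau,k)$ directly but on an auxiliary flat flow of minimal minimizers started from $\Wulff_{r_0}$, transferred to $E(\tau,k)$ via the comparison principle of Theorem \ref{teo:compare}. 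You instead prescribe the target radius $r'=r-C_8\tau$ in advance, show $|\Wulff_{r'}(x_0)\setminus E|=0$ outright, and induct on the given flat flow itself; this dispenses with the $L^\infty$--bound, with the maximal--radius argument, and with the comparison principle (your preliminary submodularity comparison is in fact never used in the induction, since the barrier lemma applies to \emph{every} minimizer of $\shorthandgeneralfunctional(\cdot;E(\tau,k),\tau)$). Your route is leaner and yields the same structure of constants, $C_8\sim \Constnorm\,\affine^{-1}\big(c(n)/r_0+\|\forcing\|_\infty\big)$; the remaining bookkeeping (whether one requires $r'\ge r_0/2$ or only $r'\ge r_0/4$, adjusting $\widehat\tau_1$ so that $C_8\widehat\tau_1\le r_0/4$) is harmless, and in fact for the indices covered by the statement one always has $r_0-C_8k\tau\ge r_0/2$.

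One point should be sharpened in the exterior case \eqref{wulff_outside}: a relative isoperimetric inequality in $\Wulff_{r'}(x_0)$ bounds volumes by the relative perimeter, but it does not control the trace term $\int_{E^{(1)}\cap\p \Wulff_{r'}(x_0)}\dualnorm(\nu)\,d\cH^{n-1}$ created by the competitor $E\setminus\Wulff_{r'}(x_0)$. The right tool is a divergence/trace estimate: integrating $\div\frac{x-x_0}{r'}=\frac{n}{r'}$ over $E\cap\Wulff_{r'}(x_0)$ and using that $\frac{x-x_0}{r'}\cdot\nu=\dualnorm(\nu)$ on $\p\Wulff_{r'}(x_0)$ while $\big|\frac{x-x_0}{r'}\cdot\nu_E\big|\le\dualnorm(\nu_E)$ inside gives $\anisotropicperimeter(E\setminus\Wulff_{r'}(x_0))-\anisotropicperimeter(E)\le \frac{n}{r'}\,|E\cap\Wulff_{r'}(x_0)|$, after which your contradiction closes exactly as in the interior case. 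This is a fixable imprecision rather than a gap (the paper itself only says the exterior case is ``similar''), but as written the named tool would not suffice.
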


\begin{proof}
Let $\tau_0$ be given by \eqref{eq:definition_of_tau0},
where we recall that $\constantforcingMorrey$ is given in \eqref{eq:on_existence_of_gammag}. 
We prove only  \eqref{wulff_inside}, the relation \eqref{wulff_outside} 
being similar. For shortness, we assume $x_0=0$ and let 
$\Wulff_{r_0}\subset \initialset.$ 

{\it Step 1.} Since $\rho_\tau\to0$ as $\tau\to0^+,$ there exists $\tau_1\in (0,\tau_0)$ (depending only on $r_0$) such that such that $\rho_{\tau_1}<r_0/10.$
For $\tau\in(0,\tau_1)$ let $E_\tau$ 
be a minimizer  of $\shorthandgeneralfunctional 
(\cdot;\initialset,\tau).$ By the choice of $\tau_1$ and 
the $L^\infty$-bound \eqref{L_cheksiz_baho}, we have  
$\Wulff_{4r_0/5}\subset E_\tau.$ Let 
$$
\bar r:=\sup\{r>0:\,\, \Wulff_r\subset E_\tau\}\ge 4r_0/5.
$$
We want to 
estimate $\bar r$ from below (see \eqref{eq:estimate_of_rbar_from_below}), thus, there is no loss of generality in assuming $\bar r<r_0.$ Fix $\epsilon\in(0,r_0-\bar r)$ and consider the difference 
\begin{multline}\label{look_paren}
0\le \shorthandgeneralfunctional 
(E_\tau\cup \Wulff_{\bar r+\epsilon}; \initialset,\tau) - 
\shorthandgeneralfunctional 
(E_\tau; \initialset,\tau)
\\
= \anisotropicperimeter(E_\tau\cup \Wulff_{\bar r+\epsilon}) - 
\anisotropicperimeter(E_\tau) 
+ \int_{\Wulff_{\bar r+\epsilon}\setminus E_\tau} \sveloc{\initialset}{x}{\tau} ~dx + \int_{\Wulff_{\bar r+\epsilon}\setminus E_\tau} \forcing ~dx.
\end{multline}
Using $\Wulff_{\bar r+\epsilon} \subset \Wulff_{r_0}\subset \initialset$ we find 
$-\sd_{\initialset} = \d_{\initialset} \ge \frac{1}{\Constnorm}\d_{
\initialset}^\norm \ge \frac{r_0-\bar r - \epsilon}{\Constnorm}$ in 
$\Wulff_{\bar r+\epsilon}\setminus E_\tau,$ where $d^\norm_F$
stands for the $\norm$-distance from the set $F$. 
Therefore, by the strict monotonicity and oddness of $\affine,$
\begin{equation}\label{eq:estimate_of_affine}
-\int_{\Wulff_{\bar r+\epsilon}\setminus E_\tau} \sveloc{\initialset}{x}{\tau} ~dx \ge \affine\Big(\tfrac{r_0-\bar r-\epsilon}{\Constnorm\tau}\Big)|
\Wulff_{\bar r+\epsilon}\setminus E_\tau|.
\end{equation}
Moreover, by the boundedness of $\forcing,$
\begin{equation}\label{eq:estimate_of_forcing}
\int_{\Wulff_{\bar r+\epsilon}\setminus E_\tau} \forcing ~dx \le \|\forcing\|_\infty |\Wulff_{\bar r+\epsilon}\setminus E_\tau|.
\end{equation}
Finally, using the anisotropic isoperimetric inequality \eqref{aniso_isop_ineq} for a.e. $\epsilon>0$ we have
\begin{equation}\label{eq:estimate_of_perimeter}
\begin{aligned}
\anisotropicperimeter(E_\tau\cup \Wulff_{\bar r+\epsilon}) - 
\anisotropicperimeter(E_\tau) = & 
\anisotropicperimeter(\Wulff_{\bar r+\epsilon}) - 
\anisotropicperimeter(E_\tau\cap \Wulff_{\bar r+\epsilon}) \\
\le & \constiso
 \Big(|\Wulff_{\bar r+\epsilon}|^{\frac{n-1}{n}} - 
|\Wulff_{\bar r+\epsilon}\cap E_\tau|^{\frac{n-1}{n}} \Big) \\
= & \constiso
|\Wulff_{\bar r+\epsilon}|^{\frac{n-1}{n}}\Big(1 - \Big|1 - \frac{|\Wulff_{\bar r+\epsilon}\setminus E_\tau|}{|\Wulff_{\bar r+\epsilon}|}\Big|^{\frac{n-1}{n}}\Big) \\
\le & 
\frac{\constiso}{|\unitballnorm|^{1/n}(\bar r+\epsilon)} |\Wulff_{\bar 
r+\epsilon} \setminus E_\tau|,
\end{aligned}
\end{equation}
where in the last inequality we used 
$(1-x)^\alpha\ge 1-x$ for any $x,\alpha\in(0,1)$.
Inserting \eqref{eq:estimate_of_affine}, \eqref{eq:estimate_of_forcing}, 
\eqref{eq:estimate_of_perimeter} in \eqref{look_paren} and using 
the arbitrariness of $\epsilon$ we get 
$$
\affine\Big(\frac{r_0-\bar r}{\Constnorm\tau}\Big) \le \frac{\constiso}{
|\unitballnorm|^{1/n}\,\bar r} + \|g\|_\infty.
$$
Thus, recalling $\bar r\ge 4r_0/5$ we get
$\frac{r_0-\bar r}{
\Constnorm\tau
} \le 
\affine^{-1}\Big(\frac{5
\constiso
}{4|\unitballnorm|^{1/n}\,r_0} + \|g\|_\infty\Big)
$, or equivalently
\begin{equation}\label{eq:estimate_of_rbar_from_below}
\bar r \geq r_0 - \Constnorm\tau
\affine^{-1}\Big(\frac{5
\constiso
}{4|\unitballnorm|^{1/n}\,r_0} + \|g\|_\infty\Big).
\end{equation}
Notice that this inequality holds also in case $\bar r\ge r_0.$ 

\medskip

{\it Step 2.} Let $\{E(\tau,k)\}$ be a flat flow starting from $\initialset$ and let $\{F(\tau,k)_*\}$ be a flat flow starting from $F_0:=
\Wulff_{r_0}$ and consisting of the minimal minimizers. By comparison (Theorem \ref{teo:compare} (c)), $F(\tau,k)_* \subset E(\tau,k)$ for all $k\ge0.$ 

Consider the sequence $r_0=r(\tau,0)\ge r(\tau,1)\ge \ldots$ of radii defined as follows: we assume 
$\Wulff_{r(\tau,k)}\subset F(\tau,k)_*$ and if $r(\tau,k-1)>r(\tau,k),$ then $ \Wulff_{r(\tau,k)}$ is the largest Wulff shape contained in $F(\tau,k)_*.$
Let $k_0\ge1$ be such that $r(\tau,k_0)\ge r_0/2$ and let $\tau_1>\tau_2>\ldots>\tau_{k_0}>0$ be given by step 1 applied with $r_0:=r(\tau,k)$ for $k=1,\ldots,k_0.$ Thus, for any $\tau\in(0,\tau_k)$ one has $r(\tau,k)>4r(\tau,k-1)/5.$ 
From step 1
$$
r(\tau,k) \ge r(\tau,k-1) - 
\Constnorm
\tau \affine^{-1}\Big( \frac{5\constiso
}{4|\unitballnorm|^{1/n} r(\tau,k-1)} + \|\forcing\|_\infty\Big),\quad 1\le k\le k_0.
$$
Now by the choice of $k_0,$ 
$$
r(\tau,k) \ge r(\tau,k-1) - \Constnorm\tau \affine^{-1}\Big( \frac{5
\constiso
}{2|\unitballnorm|^{1/n} r_0} + \|\forcing\|_\infty\Big),\quad 1\le k\le k_0,
$$
and hence
$$
r(\tau,k) \ge r_0 - \Constnorm \affine^{-1}\Big( \frac{5
\constiso
}{2|\unitballnorm|^{1/n} r_0} + \|\forcing\|_\infty\Big)k\tau,\quad 0\le k\le k_0.
$$
Now if we assume 
$$
C_8:=C_8(n,\norm, \affine, r_0, \|\forcing\|_\infty):=
\Constnorm \affine^{-1}\Big( \frac{5\constiso
}{2|\unitballnorm|^{1/n} r_0} + \|\forcing\|_\infty\Big)
$$
and 
$
k\tau \le \frac{r_0}{2C_8},
$
we get $r(\tau,k)\ge r_0/2$ and $r(\tau,k)\ge r_0-C_8k\tau.$
\end{proof}

It turns out that $\GMM$s for generalized power mean curvature flow share several properties with  $\GMM$s in the anisotropic mean curvature flow, as we now see. 
Let us study $\GMM$ starting from a Euclidean ball centered at origin. 

\begin{theorem}[\textbf{Evolution of balls}]\label{teo:evolving_ball}
Assume that $\norm$ is Euclidean and $\forcing
\equiv 0$. 
Suppose also that 
$r\mapsto \affine^{-1}((n-1)/r)$ is such that the ordinary differential equation
\begin{equation}\label{ode_for_Radii}
\begin{cases}
r'(t) = - \affine^{-1}\Big(\frac{n-1}{r(t)}\Big)  & \text{if $r(t)>0,$}\\
r(0) = r_0
\end{cases}
\end{equation}
admits a unique $C^1$ solution (for instance, 
$r\mapsto \affine^{-1}((n-1)/r)$ is convex in $(0,+\infty)$).
Then $\GMM(\sF,\cS^*,B_{r_0})$ is a singleton $\{B_{r(t)}\}_{t\ge0},$ where $r(\cdot)$ is a nonincreasing function satisfying \eqref{ode_for_Radii}. 
\end{theorem}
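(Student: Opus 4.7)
The plan has three parts: (i) by rearrangement, every flat flow step starting from a centered ball is a centered ball; (ii) the radii of the resulting ball-sequence satisfy an implicit Euler scheme for \eqref{ode_for_Radii}; (iii) standard ODE convergence, combined with the assumed uniqueness of the $C^1$ solution, upgrades subsequential convergence to full convergence, yielding that the GMM is the singleton $\{B_{r(t)}\}_{t\ge 0}$.

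\emph{Step 1: each flat flow step is a centered ball.} Via the reduction \eqref{eq:prescur_funco} with $F=B_{r_0}$, minimizing $\sF(\cdot;B_{r_0},\tau)$ over $\cS^*$ is equivalent to
\[
\min_{E\in\cS^*}\bigg\{P(E)+\int_E f\!\left(\tfrac{|x|-r_0}{\tau}\right)dx\bigg\},
\]
whose weight $h(x)=f((|x|-r_0)/\tau)$ is radially strictly increasing in $|x|$. The Euclidean isoperimetric inequality gives $P(E)\ge P(E^\ast)$ with equality iff $E$ is a ball, and the bathtub/Hardy--Littlewood rearrangement inequality gives $\int_E h\,dx\ge \int_{E^\ast} h\,dx$, strictly unless $E$ already coincides with a ball centered at the origin. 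Hence every minimizer of $\sF(\cdot;B_{r_0},\tau)$ in $\cS^*$ is a centered ball $B_{r_1}$.

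\emph{Step 2: implicit Euler recursion for the radii.} Iterating Step 1 yields $E(\tau,k)=B_{r_k}$. The computation performed in the introduction together with the critical-point equation \eqref{masul_idoralar} give, whenever the minimum is interior,
\[
\frac{r_k-r_{k-1}}{\tau}=-f^{-1}\!\left(\tfrac{n-1}{r_k}\right).
\]
Comparing with the competitor $B_{r_{k-1}}$ forces $r_k\le r_{k-1}$, and Theorem \ref{teo:comparison_with_balls} supplies $r_k\ge r_0-C_8 k\tau$ whenever $k\tau\le r_0/(2C_8)$. So for $\tau$ small the minimum is indeed interior on this range; the scheme then continues, by induction on successive time intervals, as long as $r_k>0$, and by monotonicity the flow is constantly empty after that.

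\emph{Step 3: passage to the limit.} Define $R_\tau(t):=r_{\intpart{t/\tau}}$, piecewise constant. The family $\{R_\tau\}$ is uniformly bounded and monotone nonincreasing in $t$, so Helly's selection theorem extracts a subsequence with $R_\tau(t)\to \widetilde r(t)$ at every $t\ge 0$. Rewriting the discrete recursion as
\[
R_\tau(t)-r_0=-\int_{0}^{\intpart{t/\tau}\tau} f^{-1}\!\left(\tfrac{n-1}{R_\tau(s+\tau)}\right)ds
\]
and using continuity of $r\mapsto f^{-1}((n-1)/r)$ on $(0,+\infty)$, together with the strict positivity from Step 2 up to the extinction time, one passes to the limit (dominated convergence) to conclude that $\widetilde r$ is $C^1$ and solves \eqref{ode_for_Radii}. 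The assumed uniqueness forces $\widetilde r\equiv r$, removing the subsequence extraction; hence $B_{R_\tau(t)}\to B_{r(t)}$ in $L^1_{\loc}(\Rn)$ for every $t$, and $\GMM(\sF,\cS^*,B_{r_0})=\{B_{r(t)}\}_{t\ge 0}$. The main obstacle is the rigidity in Step 1, namely the strictness of the symmetrization inequalities away from centered balls; a symmetrization-free alternative is to combine rotational invariance of the functional with the strong comparison principle for prescribed-mean-curvature minimizers applied to arbitrary rotations of the minimal minimizer, concluding that any minimizer is $O(n)$-invariant and hence a centered ball.
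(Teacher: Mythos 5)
Your overall strategy is the same as the paper's in its second and third steps (the radii of the discrete flow satisfy the implicit Euler relation \eqref{masul_idoralar}/\eqref{analysit}, one passes to the limit and invokes uniqueness of the ODE), but your Step 1 is genuinely different and, as written, cleaner: you get that \emph{every} minimizer is a centered ball by combining the isoperimetric inequality with the bathtub rearrangement inequality for the strictly radially increasing weight $\affine((|x|-r_0)/\tau)$, whereas the paper argues via the $L^\infty$-bound \eqref{L_cheksiz_baho}, rotational invariance of the minimal/maximal minimizers, and uniqueness of the critical point of $\ell$ (which is where the convexity-type hypothesis on $r\mapsto \affine^{-1}((n-1)/r)$ enters to force a unique minimizer). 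Your route does not need uniqueness of the minimizer at the discrete level, only that all minimizing radii satisfy the same Euler relation, which is indeed enough for the limit identification; this is a legitimate simplification. (Your ``symmetrization-free alternative'' at the end is weaker than you suggest: $O(n)$-invariance alone gives a union of annuli, not a ball, so the rearrangement argument should remain your primary route.)

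The genuine gap is in how you rule out premature extinction, i.e.\ in justifying ``strict positivity from Step 2 up to the extinction time.'' Step 2 only gives positivity on an initial window $k\tau\le r_0/(2C_8)$, and the constants $\widehat\tau_1$, $C_8$ of Theorem \ref{teo:comparison_with_balls} degenerate as the radius shrinks, so ``induction on successive time intervals'' does not by itself exclude the following scenario: for some $t'$ strictly smaller than the ODE extinction time, the Helly limit $\widetilde r$ jumps from a positive value to $0$ at $t'$, because for the discrete flows the minimizer over $B_{r_{k-1}}$ could in principle be empty (or fail the interior critical-point equation) even though $r_{k-1}$ stays bounded away from $0$. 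To close this you need a uniform statement of the type the paper proves in its Step 1 and Step 2: for every $c>0$ there is $\tau(c)>0$, monotone in $c$, such that for all $\rho\ge c$ and $\tau<\tau(c)$ any minimizer of $\sF(\cdot;B_\rho,\tau)$ contains $B_{\rho/2}$ (so the Euler relation holds and the one-step decrement is $\tau\,\affine^{-1}((n-1)/r_k)=O(\tau)$ while $r_k\ge c$); the paper then converts this, via the discrete estimate \eqref{discrete_holder}, into the uniform lower bound $T_\epsilon\ge C_{11}>0$ on the discrete extinction times, which is exactly what licenses passing to the limit on every $[0,\bar T]$ with $\bar T<T_0$ and setting $r\equiv 0$ afterwards. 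You have the ingredients (the $L^\infty$-bound argument behind Theorem \ref{teo:comparison_with_balls}, and \eqref{discrete_holder}), but the proposal does not assemble them, and without this step the identification of the limit with the ODE solution on the whole interval of positivity, and hence the singleton claim, is not yet proved.
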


\begin{proof}
We can write
$$
\shorthandgeneralfunctional(E;\initialset,\tau):=P(E) + \int_E \sveloc{\initialset}{x}{\tau}~dx +
c_{\initialset}.
$$

{\it Step 1: properties of minimizers.} Fix $r_0>0$.  Let $\tau_0$ and $\rho_\tau$ 
be as in \eqref{eq:definition_of_tau0}, \eqref{L_cheksiz_baho} so that 
$$
\sup_{\cl{E_\tau\Delta B_{r_0}}}\,\d_{B_{r_0}}\,\le 2\rho_\tau,\quad \tau\in(0,\tau_0),
$$
for any minimizer $E_\tau$ of $\shorthandgeneralfunctional(\cdot;B_{r_0},\tau).$ 
Since $\rho_\tau\to 0^+$ as $\tau\to0^+,$
there exists $\tau_{r_0}\in(0,\tau_0)$ such that $B_{r_0/2}\subset E_\tau\subset B_{3r_0/2}$ for all $\tau\in (0,\tau_{r_0})$. 

Let us study the minimal and maximal minimizers. Owing to 
$$
\sveloc{B_{r_0}}{x}{\tau} = \affine\Big(\tfrac{|x|-r_0}{\tau}\Big),\quad x\in\Rn,
$$
the volume term of $\sF$ is radially symmetric. Therefore, the minimal and maximal minimizers of $\shorthandgeneralfunctional
(\cdot;B_{r_0},\tau)$ are radially symmetri
c, i.e., both of them are balls. By the choice of $\tau_{r_0},$ their radii are in the interval $(r_0/2,3r_0/2).$ 
In particular, the radii of minimal and maximal minimizers satisfy \eqref{masul_idoralar}. By the assumptions on $\affine,$ $r\mapsto \affine^{-1}\Big(\tfrac{n-1}{r} \Big)$ strictly decreases, convex and 
positive in $(0,+\infty).$ Therefore, using the linearity of $r\mapsto \frac{r - r_0}{\tau},$ we find that for any $\tau\in (0,\tau_{r_0})$ 
there exists a unique minimum point $r_\tau$ of $\affine$ in the interval $(r_0/2,3r_0/2).$ Clearly, $r_\tau\in (r_0/2,r_0).$ In particular, the minimal and maximal minimizers coincide, i.e., $\shorthandgeneralfunctional(\cdot; B_{r_0},\tau)$ has a unique minimizer $B_{r_\tau}.$

Without loss of generality, we may assume $r_0\mapsto \tau_{r_0}$ is increasing.
\smallskip

{\it Step 2: some properties of flat flows.} Given $r_0>0$ and $\epsilon\in(0,1/4),$ let $\bar\tau_\epsilon:=\tau_{\epsilon r_0/2}>0$ be given by step 1. For any $\tau\in (0,\bar \tau_{\epsilon})$ let $r(\tau,k),$ $k\ge0,$ be defined inductively as follows: $r(\tau,0)=r_0,$ and for $k\ge1,$ if $r(\tau,k)>\epsilon r_0,$ the ball $B_{r(\tau,k)}$ is the unique minimizer of $\shorthandgeneralfunctional(\cdot; B_{r(\tau,k-1)},\tau).$ In view of \eqref{masul_idoralar} these radii  satisfy the equation 
\begin{equation}\label{analysit}
\frac{r(\tau,k) - r(\tau,k-1)}{\tau} = -\affine^{-1}\Big(\frac{n-1}{r(\tau,k)}\Big).
\end{equation}
Thus, the sequence $k\mapsto r(\tau,k)$ strictly decreases. By \eqref{analysit} there exists a unique $\bar k_{\tau,\epsilon}>0$ such that $r(\tau,\bar k_{\tau,\epsilon}+1)\le \epsilon r_0<r(\tau,\bar k_{\tau,\epsilon}).$ 
One can readily check that both maps $\tau\mapsto k_{\tau,\epsilon}$ and $\epsilon\mapsto k_{\tau,\epsilon}$ are decreasing. Let 
$$
T_\epsilon:=\liminf\limits_{\tau\to0^+}  \tau k_{\tau,\epsilon}.
$$
By the monotonicity of $k_{\tau,\epsilon},$ $\epsilon\mapsto T_\epsilon$ is nondecreasing. Let us show that it is uniformly away from $0.$ Indeed, since $B_{r(\tau,k)},$ $0\le k\le k_{\tau,\epsilon}$ are flat flows, applying \eqref{discrete_holder} with $m_1=1$ and $m_2=k_{\tau,\epsilon}$ we get 
$$
|B_{r(\tau,1)}\setminus B_{r(\tau,k_{\epsilon,\tau}+1)}| \le C_4p^\sigma (\tau k_{\tau,\epsilon} + \tau - \tau) + \frac{1}{\affine(p)}
$$
for some $p>0$ and suitable $\sigma\in \{1,n\}.$ Now letting $\tau\to0^+$  and recalling $B_{r(\tau,1)}\to B_{r_0},$ we get 
$$
|B_{r_0}\setminus B_{\epsilon r_0}| \le C_4p^\sigma T_\epsilon \frac{1}{\affine(p)}.
$$
Now taking $p$ large enough (depending only on $r_0$) we deduce 
$
T_{\epsilon}> C_{11}
$
for some $C_{11}$ depending only on $C_4$ and $r_0.$ Let us denote
$$
T_0:=\sup_{\epsilon\in(0,1/4)} T_\epsilon.
$$
By the definition, $T_\epsilon \le T_0.$
\smallskip

{\it Step 3: properties of $\GMM$.} Let $\tau_j\to0^+$ be any sequence such that 
$$
B_{r(\tau_j,\intpart{t/\tau_j})} \to B_{r(t)}\quad \text{in $L^1(\Rn)$ as $j\to+\infty$} \quad 
\forall t\ge0,
$$
where we assume $r(\tau,k):=0$ if it does not satisfy \eqref{masul_idoralar}. Equivalently,
\begin{equation}\label{radii_convergence}
\lim\limits_{j\to+\infty} r(\tau_j,\intpart{t/\tau_j}) = r(t),\quad t\ge0.
\end{equation}

Fix any $\bar T<T_0$ and let $\bar \epsilon>0$ be such that $T_{\bar \epsilon}\in (\bar T,T_0).$ Then for any $\tau_j<t<\bar T$ so that $1\le k:=\intpart{t/\tau_j} < \intpart{T_{\bar\epsilon}/\tau_j}$ and $r(\tau_j,\intpart{t/\tau_j})\ge \bar\epsilon r_0>0,$ we can apply \eqref{analysit}: 
\begin{equation*}
\frac{r(\tau_j,\intpart{t/\tau_j}) - r(\tau_j,\intpart{(t-\tau_j)/\tau_j}  ) }{\tau_j} = -\affine^{-1}\Big(\frac{n-1}{r(\tau_j,\intpart{t/\tau_j})}\Big).
\end{equation*}
Passing to the limit in this difference equation and using \eqref{radii_convergence} we obtain 
\begin{equation}\label{islamasa}
r'(t) = - \affine^{-1}\Big(\frac{n-1}{r(t)}\Big),\quad t\in (0,\bar T),
\end{equation}
which  admits a unique solution. By the definition of $T_\epsilon$ we can show that $\lim_{\bar T\nearrow T_0} r(\bar T)=0.$ Since $\bar T<T_0$ is arbitrary, the radii of balls in each $\GMM$ necessarily satisfy \eqref{islamasa}. Now by uniqueness, $\GMM(\sF,\cS^*,B_{r_0})$ is a singleton $\{B_{r(t)}\}_{t\ge0},$ where we extend $r(t):=0$ for $t\ge T_0.$
\end{proof}

\begin{remark}
Let $\affine(r)=r^\DeGiorgiexponent$ in $(0,+\infty)$ for $\DeGiorgiexponent>0.$ Then the  
ODE in \eqref{ode_for_Radii} reads as 
$$
r' = -\frac{(n-1)^{1/\DeGiorgiexponent}}{r^{1/\DeGiorgiexponent}}.
$$
Thus, $\GMM(\sF,\cS^*,B_{r_0}) = \{B_{r(t)}\},$ where 
$$
r(t) = 
\begin{cases}
\Big(r_0^{1+\frac1\DeGiorgiexponent
} - \big(1+\tfrac{1}{\DeGiorgiexponent}\big)(n-1)^{\frac{1}{\DeGiorgiexponent
}}t\Big)^{\frac{\DeGiorgiexponent
}{1+\DeGiorgiexponent
}} & \text{if $t\in \Big[0,\tfrac{r_0^{1+1/\DeGiorgiexponent
}}{\big(1+1/\DeGiorgiexponent
\big)(n-1)^{1/\DeGiorgiexponent
} }\Big),$}\\[3mm]
0 & \text{otherwise.}
\end{cases}
$$
\end{remark}

\section{Evolution of mean convex sets: proof
of Theorem \ref{teo:meanconvex_intro}}\label{sec:evolution_of_mean_convex_sets}
In this section we mostly follow the ideas of \cite{ChN:2022}. 
Apart from some technical points due to the presence of the 
function $\affine,$ one difference here is in the proof of the $\delta$-convexity preservation of minimizers  (Proposition \ref{prop:mean_convexity_of_minimal_and_maximal_minimizers}): we apply directly the prescribed mean curvature functional in place of $1$-harmonic functions and Anzellotti-type arguments as done in \cite{ChN:2022}.
We assume here that $\forcing\equiv 0,$ $\tau>0$ and $k\ge0.$ 

Given an anisotropy $\norm$ in $\Rn,$ $\delta\ge0$ and a (nonempty)  open set $\Omega\subset\Rn,$ a set $E\Subset\Omega$ is called \emph{$\delta$-mean 
convex} (in $\Omega$) if 
$$
\anisotropicperimeter(E)-\delta|E| \le 
\anisotropicperimeter(F)-\delta|F|\quad \text{for any $F\Subset\Omega$ with $E\subset F.$}
$$
When $\delta=0,$ we simply say $E$ is mean convex. Repeating the density estimate arguments (Section \ref{subsec:density_estimates})  
we can readily show the existence of $r_0,\lowerbounddensitymc>0$ 
depending only on $n,$ $\constnorm$, $\Constnorm$ and 
$\affine$  such that every $\delta$-mean convex set $E$ satisfies 
$$
\frac{|B_r(x)\setminus E|}{|B_r(x)|} \ge \lowerbounddensitymc
>0
$$
for all $r\in (0,r_0)$ and for all $x\in\p E.$ In particular, $E=E^{(1)}$ is open.
Let us recall some more properties of $\delta$-mean convex sets.

\begin{proposition}[{\cite{ChN:2022,DPL:2020}}]\label{prop:mean_convex_propert}
Let $\delta\ge0.$

\begin{itemize}
\item[\rm(a)] Suppose there exists a $\delta$-mean convex set $E\Subset\Omega$ in $\Omega.$ Then $\emptyset$ is $\delta$-mean convex in $\Om$.

\item[\rm(b)] $E\Subset \Omega$ is $\delta$-mean convex in $\Omega$ if and only if 
$$
\anisotropicperimeter(E)  \le 
\anisotropicperimeter(E\cup F) - \delta|F\setminus E|,\quad F\Subset\Omega.
$$

\item[\rm(c)] $E\Subset \Omega$ is $\delta$-mean convex in $\Omega$ if and only if 
$$
\anisotropicperimeter(E\cap F)  \le 
\anisotropicperimeter(F) - \delta|F\setminus E|,\quad F\Subset\Omega.
$$

\item[\rm(d)] Let $E_\naturalindex
\Subset \Omega,$ $\naturalindex=1,2,\ldots,$ be $\delta$-mean convex 
in $\Omega$ with $E_\naturalindex\to E$ in $L^1(\Omega)$ for some $E\Subset\Omega.$ Then $E$ is $\delta$-mean convex.
\end{itemize}

\end{proposition}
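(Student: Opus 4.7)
My plan is to use the submodularity inequality
\begin{equation*}
\anisotropicperimeter(A\cup B)+\anisotropicperimeter(A\cap B)\le \anisotropicperimeter(A)+\anisotropicperimeter(B),
\end{equation*}
valid for any pair of finite-perimeter sets, together with the $L^1$-lower semicontinuity of $\anisotropicperimeter$, as the two core tools. I would first handle (b) and (c) as equivalent variational characterizations, then reduce (d) to (c) via lower semicontinuity, and finally address (a), which I expect to be the main obstacle.

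For (b), the ``only if'' direction is immediate by inserting the admissible competitor $G=E\cup F$ in the definition of $\delta$-mean convexity (note $E\subset G\Subset\Omega$ since $E,F\Subset\Omega$) and using $|E\cup F|=|E|+|F\setminus E|$. The ``if'' direction is obtained by specializing $F$ to a generic admissible superset $G\Supset E$ with $G\Subset\Omega$ and using $E\cup G=G$, $|G\setminus E|=|G|-|E|$. For (c), the ``only if'' direction combines (b) with submodularity,
\begin{equation*}
\anisotropicperimeter(E\cap F)\le \anisotropicperimeter(E)+\anisotropicperimeter(F)-\anisotropicperimeter(E\cup F)\le \anisotropicperimeter(F)-\delta|F\setminus E|,
\end{equation*}
and for the converse I would apply the hypothesis with $F\cup E$ in place of $F$: since $E\cap(F\cup E)=E$ and $(F\cup E)\setminus E=F\setminus E$, this returns exactly (b). Hence (b) and (c) are equivalent reformulations of the definition.

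For (d), I would work with the form (c): for each $\naturalindex$ and each $F\Subset\Omega$, $\anisotropicperimeter(E_\naturalindex\cap F)\le \anisotropicperimeter(F)-\delta|F\setminus E_\naturalindex|$. Since $E_\naturalindex\to E$ in $L^1(\Omega)$ implies $E_\naturalindex\cap F\to E\cap F$ in $L^1$ and $|F\setminus E_\naturalindex|\to |F\setminus E|$, the lower semicontinuity of $\anisotropicperimeter$ yields $\anisotropicperimeter(E\cap F)\le \anisotropicperimeter(F)-\delta|F\setminus E|$ in the limit. Thus $E$ satisfies (c), hence is $\delta$-mean convex.

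The main obstacle is (a), where the conclusion amounts to the global ``weak isoperimetric'' inequality $\anisotropicperimeter(F)\ge\delta|F|$ for every $F\Subset\Omega$. Applying (c) with $F$ replaced by $F\setminus E$ (disjoint from $E$) already gives $\anisotropicperimeter(F\setminus E)\ge \delta|F\setminus E|$, and applied as stated it yields $\anisotropicperimeter(F)\ge\anisotropicperimeter(E\cap F)+\delta|F\setminus E|$; what remains is the ``internal'' bound $\anisotropicperimeter(G)\ge\delta|G|$ for every $G\subset E$ with $G\Subset\Omega$. My approach is to argue by contradiction: if some $G_0\subset E$ satisfied $\anisotropicperimeter(G_0)<\delta|G_0|$, I would minimize $\mathcal{G}(G):=\anisotropicperimeter(G)-\delta|G|$ among subsets of $E$ with $G\Subset\Omega$ via the direct method (using the uniform perimeter bound together with the compactness and lower semicontinuity provided by (d)); the resulting minimizer $G^*\subset E$ would satisfy $\mathcal{G}(G^*)<0$, and by the submodularity of $\mathcal G$ combined with the $\delta$-mean convexity of $E$ (which forces $\mathcal G(G\cup E)\ge \mathcal G(E)$ for every admissible $G$) one checks that $G^*$ is itself $\delta$-mean convex in $\Omega$; iterating the construction then produces a strictly decreasing chain of nonempty $\delta$-mean convex sets with strictly negative $\mathcal{G}$-values, contradicting the compactness constraint $G^*\Subset\Omega$. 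This fixed-point/minimality argument is the technical heart of (a) and is carried out in \cite{ChN:2022} and \cite{DPL:2020}.
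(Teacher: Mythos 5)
First, a point of reference: the paper itself does not prove this proposition, it quotes it from \cite{ChN:2022,DPL:2020}, so your argument has to stand on its own. Your treatment of (b), (c) and (d) does: (b) is the definition tested on the competitor $E\cup F$, (c) follows from (b) by submodularity of $\anisotropicperimeter$ and conversely by testing (c) on $E\cup F$, and (d) follows from the characterization (c) together with the $L^1$-lower semicontinuity of $\anisotropicperimeter$ and the convergence $|F\setminus E_\naturalindex|\to|F\setminus E|$. These parts are correct and are the standard arguments.

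The genuine gap is in (a), exactly at the claimed contradiction. Your reduction (via (c)) to proving $\anisotropicperimeter(G)\ge\delta|G|$ for $G\subset E$ is fine, and so are the construction of a minimizer $G^*$ of $\mathcal G(G):=\anisotropicperimeter(G)-\delta|G|$ among $G\subset E$ and the verification that $G^*$ is $\delta$-mean convex: indeed $\mathcal G(F)\ge\mathcal G(F\cap E)\ge\mathcal G(G^*)$ for every $F\Subset\Omega$, the first inequality being (c) for $E$ and the second the minimality of $G^*$. But the final step, ``iterating the construction produces a strictly decreasing chain of nonempty $\delta$-mean convex sets \ldots contradicting $G^*\Subset\Omega$'', is not a proof: re-running the minimization inside $G^*$ just returns $G^*$ (it already minimizes $\mathcal G$ among its own subsets), so no strictly decreasing chain is produced, and even if one were, a nested decreasing sequence of nonempty sets compactly contained in $\Omega$ is in no way contradictory. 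What you have actually reached is the statement that a $\delta$-mean convex set $G^*\Subset\Omega$ with $\anisotropicperimeter(G^*)<\delta|G^*|$ must be excluded; but this exclusion is precisely statement (a) applied to $G^*$ itself, so stopping there (or deferring to the references) leaves the heart of (a) unproven. A concrete way to close the gap from where you stand, with $\delta>0$ (the case $\delta=0$ is trivial): since $\mathcal G(F)\ge\mathcal G(G^*)$ for \emph{every} $F\Subset\Omega$, compare with the dilations $F_t:=x_0+(1+t)(G^*-x_0)$, which satisfy $F_t\Subset\Omega$ for all small $t>0$ because $G^*\subset E\Subset\Omega$ and $E$ is bounded; by homogeneity,
\begin{equation*}
\mathcal G(F_t)=(1+t)^{n-1}\anisotropicperimeter(G^*)-(1+t)^{n}\delta|G^*|\le(1+t)^{n-1}\mathcal G(G^*)<\mathcal G(G^*)
\end{equation*}
whenever $\mathcal G(G^*)<0$ (which forces $|G^*|>0$), contradicting the global bound above. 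With this scaling step inserted, your scheme for (a) becomes a complete proof.
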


Since we are mostly interested in bounded sets, in view of \eqref{ofortoo} we can 
write, for a constant $\constantprescribed$
independent of $E$,	 
$$
\shorthandgeneralfunctional
(E;F,\tau) := \anisotropicperimeter(E)+\int_E\sveloc{F}{x}{\tau}~dx +\constantprescribed $$
if $F$ is bounded, see \eqref{eq:prescur_funco}.

\begin{lemma}[\textbf{Inclusion of minimizers}]\label{lem:inclusion_of_minimizers}
Let $F\Subset\Omega$ be an open $\delta$-mean convex set in $\Omega$ for some $\delta\ge0.$ 
Suppose that $\affine$
satisfies (\ref{hyp:main}a), (\ref{hyp:main}b), 
and that  $\forcing \equiv 0$. 
Let $E$ be a minimizer of $\shorthandgeneralfunctional
(\cdot;F,\tau).$ Then:
\begin{itemize}
\item[\rm(i)] $\cl{E} \subset \cl{F};$ if $\delta>0,$ then $E\subset F,$ 

\item[\rm(ii)] if $\delta>0,$ 
\begin{equation}\label{massage_massage}
\bigcup_{|\eta|<\affine^{-1}(\delta)\tau} (E+\eta) \subset \cl{F}\quad\text{and}\quad \sd_E \ge \sd_F + \affine^{-1}(\delta)\tau\,\,\,\text{in $\Rn$}
\end{equation}
provided $\dist(F+\delta\tau,\p \Omega)>0,$

\item[\rm(iii)] $E$ is mean convex.
\end{itemize}
\end{lemma}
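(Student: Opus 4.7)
The proof combines the prescribed‑curvature rewriting \eqref{eq:prescur_funco} of $\shorthandgeneralfunctional$ (valid since $\forcing\equiv 0$ makes the constant term harmless) with the characterization in Proposition~\ref{prop:mean_convex_propert}(c) of $\delta$-mean convexity, using throughout that $\sd_F\ge 0$ on $\Rn\setminus F$, $\sd_F\le 0$ on $\overline F$, and that $|\p F|=0$ by the uniform density estimates for $\delta$-mean convex sets recalled just before Proposition~\ref{prop:mean_convex_propert}.

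For (i) I would test minimality against $E\cap F$. The inequality $\shorthandgeneralfunctional(E;F,\tau)\le\shorthandgeneralfunctional(E\cap F;F,\tau)$ reduces, after cancellations, to
\[
0\le \anisotropicperimeter(E\cap F)-\anisotropicperimeter(E)-\int_{E\setminus F}\affine\!\bigl(\tfrac{\sd_F}{\tau}\bigr)\,dx,
\]
and Proposition~\ref{prop:mean_convex_propert}(c) applied to $F$ with competitor $E$ gives $\anisotropicperimeter(E\cap F)\le\anisotropicperimeter(E)-\delta|E\setminus F|$. Hence
\[
\delta|E\setminus F|+\int_{E\setminus F}\affine\!\bigl(\tfrac{\sd_F}{\tau}\bigr)\,dx\le 0.
\]
Both terms are nonnegative on $E\setminus F$ (where $\sd_F\ge 0$ and $\affine\ge 0$), so both vanish. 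When $\delta>0$ this gives $|E\setminus F|=0$ at once; when $\delta=0$, the vanishing integral forces $\sd_F=0$ a.e.\ on $E\setminus F$, so $E\setminus F\subset\p F$ modulo a null set and $|\p F|=0$ closes the argument. Since $E=E^{(1)}$ has density $1$ at each of its points, no such point may lie in the open set $\Rn\setminus\overline F$, whence $E\subset \overline F$ and $\overline E\subset \overline F$. When $\delta>0$, the density estimates at $\p F$ additionally prevent $E$ from meeting $\p F$, giving the sharper $E\subset F$.

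For (ii) I would run the same comparison with the translate $F_\eta:=F-\eta$ in place of $F$. By translation invariance of $\anisotropicperimeter$ and Lebesgue measure, Proposition~\ref{prop:mean_convex_propert}(c) holds for $F_\eta$ against the test $E$; the neighborhood hypothesis $\dist(F+\delta\tau,\p\Omega)>0$ (read as the $\affine^{-1}(\delta)\tau$-neighborhood of $F$ being compactly contained in $\Omega$) ensures the translated competitor still lies in the admissible class for $|\eta|<\affine^{-1}(\delta)\tau$. The resulting inequality is
\[
\delta|E\setminus F_\eta|+\int_{E\setminus F_\eta}\affine\!\bigl(\tfrac{\sd_F}{\tau}\bigr)\,dx\le 0.
\]
For $x\in E\setminus F_\eta$ one has $x+\eta\notin F$, so $\sd_F(x+\eta)\ge 0$; by $1$-Lipschitzianity and oddness of $\affine$,
$\affine(\sd_F(x)/\tau)\ge\affine(-|\eta|/\tau)=-\affine(|\eta|/\tau)$. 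Hence $(\delta-\affine(|\eta|/\tau))|E\setminus F_\eta|\le 0$, and the strict inequality $|\eta|<\affine^{-1}(\delta)\tau$ makes the coefficient positive, so $|E\setminus F_\eta|=0$ and $E+\eta\subset\overline F$. Taking the union over such $\eta$ gives the first inclusion. Setting $r:=\affine^{-1}(\delta)\tau$, from $E+B_r\subset\overline F$, the monotonicity $A\subset B\Rightarrow\sd_A\ge\sd_B$ and the identity $\sd_{E+B_r}=\sd_E-r$ give $\sd_E-r\ge\sd_{\overline F}=\sd_F$, the desired bound.

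For (iii), given any $G\Subset\Omega$ with $E\subset G$, I would interpose $G_1:=G\cap\overline F$. From (i), $E\subset G_1$; minimality against $G_1$ yields
\[
\anisotropicperimeter(E)\le \anisotropicperimeter(G_1)+\int_{G_1\setminus E}\affine\!\bigl(\tfrac{\sd_F}{\tau}\bigr)\,dx\le\anisotropicperimeter(G_1),
\]
the second inequality because $G_1\setminus E\subset\overline F$ makes the integrand nonpositive. Then Proposition~\ref{prop:mean_convex_propert}(c) applied to $F$ with the weaker choice $\delta=0$ gives $\anisotropicperimeter(F\cap G)\le\anisotropicperimeter(G)$, and $|\p F|=0$ identifies $\anisotropicperimeter(G_1)=\anisotropicperimeter(F\cap G)$; chaining yields $\anisotropicperimeter(E)\le\anisotropicperimeter(G)$, i.e.\ mean convexity of $E$. \emph{The main obstacle} is (ii): propagating $\delta$-mean convexity under translation requires that the translated competitor remain inside $\Omega$ (this is precisely the role of the neighborhood hypothesis), and the sharp threshold $|\eta|<\affine^{-1}(\delta)\tau$ is forced by the cancellation enabled by oddness of $\affine$ in Hypothesis~(\ref{hyp:main}a).
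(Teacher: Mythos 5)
Your overall route is the same as the paper's: test minimality of $E$ against $E\cap F$ (resp.\ against intersections with translates of $F$, resp.\ against $G$ cut by $F$), combine with the characterizations of $\delta$-mean convexity in Proposition~\ref{prop:mean_convex_propert}, and use oddness and monotonicity of $\affine$ together with the sign of $\sd_F$; your variant of translating $F$ by $-\eta$ instead of $E$ by $\eta$ in (ii) is equivalent to the paper's argument. However, there is one load-bearing step that is not justified: you repeatedly invoke $|\p F|=0$, attributing it to the density estimates recalled before Proposition~\ref{prop:mean_convex_propert}. Those estimates are only an \emph{exterior} bound $|B_r(x)\setminus F|\ge\vartheta|B_r(x)|$ at $x\in\p F$; they yield that $F=F^{(1)}$ is open, but they do not exclude $|\p F|>0$ (mean convexity gives no interior density bound, so no lower perimeter density at boundary points). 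In (i) this is harmless, since $\cl E\subset\cl F$ already follows from $\sd_F>0$ off $\cl F$. But in (iii) you genuinely use it to identify $\anisotropicperimeter(G\cap\cl F)=\anisotropicperimeter(G\cap F)$, and in (ii) the final chain uses $\sd_{\cl F}=\sd_F$, which fails at interior points whenever $\p F\neq\p(\cl F)$ (again a scenario only excluded by $|\p F|=0$). In addition, the ``identity'' $\sd_{E+B_r}=\sd_E-r$ is false in general (enlarging can fill holes, e.g.\ an annulus whose hole disappears), although only the inequality $\sd_{E+B_r}\le\sd_E-r$ is needed, and that one is true.

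Both defects are repairable within your framework, and the paper shows how. For (iii), use the competitor $F\cap G$ directly: the discrepancy $E\setminus(F\cap G)=E\setminus F$ is contained, up to a null set, in $\{\sd_F=0\}$ by your own argument in (i), so the corresponding integral vanishes and no comparison of $\anisotropicperimeter(G\cap\cl F)$ with $\anisotropicperimeter(G\cap F)$ is needed; then Proposition~\ref{prop:mean_convex_propert}(c) gives $\anisotropicperimeter(E)\le\anisotropicperimeter(F\cap G)\le\anisotropicperimeter(G)$. For the second half of (ii), either observe that, $E$ being open, $E+B_r=\bigcup_{\eta\in B_r\cap\mathbb{Q}^n}(E+\eta)\subset F$ (each translate is a.e.\ contained in $F$ and $F=F^{(1)}$), so you may apply monotonicity with $F$ itself instead of $\cl F$; or, as the paper does, deduce $\sd_E\ge\sd_F+\affine^{-1}(\delta)\tau$ by an elementary triangle-inequality case analysis from $E\subset F$ and $\dist(\p E,\p F)\ge\affine^{-1}(\delta)\tau$, which avoids both the identity for $\sd_{E+B_r}$ and any regularity of $\p F$.
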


\begin{proof}
As we have seen in the proof of Theorem \ref{teo:existence_gmm}, under the assumptions of the lemma every minimizer $E$ of $\shorthandgeneralfunctional
(\cdot;F,\tau)$ satisfies $E\subset \Omega.$

(i) By the $\delta$-mean convexity of $F$ in $\Omega$  
\begin{equation}\label{F_meanconvex}
\anisotropicperimeter(F)\le \anisotropicperimeter(F\cup E) -\delta |E\setminus F|,
\end{equation}
and by the minimality of $E$
$$
\anisotropicperimeter(E) +\int_E\sveloc{F}{x}{\tau}~dx 
\le 
\anisotropicperimeter(E\cap F) +\int_{E\cap F}\sveloc{F}{x}{\tau}~dx.
$$
Summing these inequalities we get 
\begin{align}\label{dfnbg}
\int_{E\setminus F} \sveloc{F}{x}{\tau}~dx  + \delta|E\setminus F| 
\le \anisotropicperimeter(E\cup F)+
\anisotropicperimeter(E\cap F)-
\anisotropicperimeter(E)-\anisotropicperimeter(F)\le0,
\end{align}
where in the last inequality we used the submodularity of $\anisotropicperimeter.$ Since $\sd_F>0$ in $\Rn\setminus \cl{F}$ and $f>0$ in $\R^+,$ from \eqref{dfnbg}  we deduce $|E\setminus \cl{F}|=0,$ i.e., $E\subset \cl{F},$  hence, $\cl{E}\subset\cl{F}.$ Note that if $\delta>0,$ then $E\subset F.$

(ii) Assume $\delta>0$ and $\eta\in B_{\affine^{-1}(\delta)\tau}.$ 
Clearly, $F+\eta$ is $\delta$-mean convex in $\Omega+\eta$ and $E+\eta$ is a minimizer of $\shorthandgeneralfunctional(\cdot;F+\eta,\tau).$ By the choice of $\tau,$ $E+\eta\Subset\Omega$ and hence we can apply \eqref{F_meanconvex} with $E:=E+\eta.$ Moreover, 
$$
\anisotropicperimeter(E+\eta) +\int_{E+\eta}\sveloc{F+\eta}{x}{\tau}~dx 
\le 
\anisotropicperimeter([E+\eta]\cap F) +\int_{[E+\eta]\cap F}\sveloc{F+\eta}{x}{\tau}~dx.
$$
Summing this and \eqref{F_meanconvex} (applied with $E=E+\eta$) we deduce
$$
\int_{[E+\eta]\setminus F}\Big(\sveloc{F+\eta}{x}{\tau} + \delta\Big)~dx \le 
\anisotropicperimeter([E+\eta]\cup F)+ 
\anisotropicperimeter([E+\eta]\cap F) - \anisotropicperimeter(F) - 
\anisotropicperimeter(E+\eta)\le0.
$$
Thus, 
\begin{equation}\label{launch_missile}
\int_{[E+\eta]\setminus F}\Big(\sveloc{F+\eta}{x}{\tau} + \delta\Big)~dx = 
\int_{[E+\eta]\setminus F}\Big(\sveloc{F}{x-\eta}{\tau} + \delta\Big)~dx \le 0.
\end{equation}
Note that if $x\in [E+\tau]\setminus F,$ then by (a) $x-\eta \in E\subset F,$ and hence
$$
\sd_F(x - \eta)  = -\d_F(x-\eta) \ge - |x - (x-\eta)|=  -|\eta|>-\affine^{-1}(\delta)\tau.
$$
Then by the strict monotonicity of $\affine,$
$$
\sveloc{F}{x-\eta}{\tau} + \delta>0\quad\text{on $[E+\tau]\setminus F$}
$$
and therefore, by \eqref{launch_missile} $E+\eta\subset\cl{F}.$

Let us prove $\sd_E\ge \sd_F + \affine^{-1}(\delta)\tau.$ We know that $E\subset F$ and
\begin{equation}\label{chuvak_millard}
\inf_{x\in \p F,\,y\in \p E}\,\,\, |x-y| \ge \affine^{-1}(\delta)\tau.
\end{equation}
Take $x\in F^c$ and let $y\in \p E$ be such that $\d_E(x) = |x-y|.$ Then there exists $z\in [x,y]\cap\p F,$ and by the definition of $\d_F$ and \eqref{chuvak_millard} 
$$
\sd_E(x)=\d_E(x) = |x-y| = |x-z| + |z-y| \ge \d_F(x) +\affine^{-1}(\delta)\tau
=
\sd_F(x) +\affine^{-1}(\delta)\tau.
$$
Now assume that $x\in F\setminus E$ and let $y\in \p F$ and $z\in \p E$ such that $\d_F(x)=|x-y|$  and $\d_E(x)=|x-z|.$ Then  by \eqref{chuvak_millard}
$$
\d_E(x)+\d_F(x) = |x-y|+|x-z|\ge|y-z| \ge \affine^{-1}(\delta)\tau,
$$
and therefore,
$$
\sd_E(x) = \d_E(x)\ge -\d_F(x) + \affine^{-1}(\delta)\tau = \sd_F(x)+\affine^{-1}(\delta)\tau.
$$
Finally, assume that $x\in E$ and let $y\in \p F$ be such that $\d_F(x)=|y-x|.$ Then there exists $z\in [x,y]\cap\p E,$ and hence, as above 
$$
-\sd_F(x)=\d_F(x) = |x-z| + |z-y| \ge \d_E(x) +\affine^{-1}(\delta)\tau
= -\sd_E(x) +\affine^{-1}(\delta)\tau.
$$

(iii) We claim that $\anisotropicperimeter(E)\le \anisotropicperimeter(G)$ for any $G\Subset\Omega$ with $E\subset G.$ Indeed, by the $\delta$-mean convexity of $F$ and Proposition \ref{prop:mean_convex_propert} (c)
\begin{equation}\label{sjf7628udjcn}
\anisotropicperimeter(F\cap G) \le \anisotropicperimeter(G) - \delta|G\setminus F|.
\end{equation}
Moreover, by the minimality of $E,$
$$
\anisotropicperimeter(E) + \int_{E}\sveloc{F}{x}{\tau}~dx \le \anisotropicperimeter(F\cap G) + \int_{F\cap G}\sveloc{F}{x}{\tau}~dx. 
$$
Since $E\subset F\cap G$ and $\affine$ is odd, this inequality 
becomes
$$
\anisotropicperimeter(E) + \int_{[F\cap G]\setminus E} \affine\Big(-\tfrac{\sd_F }{\tau}\Big)~dx \le \anisotropicperimeter(F\cap G).
$$
The integral in this inequality is nonnegative. Therefore, by \eqref{sjf7628udjcn}
$$
\anisotropicperimeter(E) \le \anisotropicperimeter(F\cap G) \le \anisotropicperimeter(G) - \delta|G\setminus F|\le \anisotropicperimeter(G).
$$
\end{proof}

The following proposition improves the last assertion of Lemma 
\ref{lem:inclusion_of_minimizers}.

\begin{proposition}[\textbf{Mean convexity
of minimal and maximal minimizers}]\label{prop:mean_convexity_of_minimal_and_maximal_minimizers}
Suppose that $\affine$
satisfies (\ref{hyp:main}a), (\ref{hyp:main}b), 
and that  $\forcing \equiv 0$. 
Let $\initialset\Subset\Omega$ be an open $\delta$-mean convex set in $\Omega$ for some $\delta>0.$ Then the minimal and maximal minimizers of $\shorthandgeneralfunctional
(\cdot;\initialset,\tau)$ (in the sense of Corollary \ref{cor:minmaxprinciple}) are $\delta$-mean convex in $\Omega$.  
\end{proposition}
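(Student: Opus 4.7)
My plan is to adapt the strategy of \cite{ChN:2022} but, as the paper indicates, to substitute their Anzelotti-pairing calibration with the comparison principles for the prescribed mean curvature functional (Theorem \ref{teo:compare}). Via \eqref{eq:prescur_funco}, a minimizer of $\shorthandgeneralfunctional(\cdot;\initialset,\tau)$ is exactly a minimizer of $J(E) := \anisotropicperimeter(E) + \int_E h \, dx$ with $h := \affine(\sd_\initialset/\tau)$. Let $E^*$ be the minimal minimizer; the maximal case will be symmetric. By Lemma \ref{lem:inclusion_of_minimizers}(i)--(ii) and the strict monotonicity and oddness of $\affine$, we get $E^*\subset\initialset$ and $\sd_{E^*}\ge \sd_\initialset+\affine^{-1}(\delta)\tau$ on $\Rn$, so that $h\le -\delta$ on the inner-parallel set $\initialset_\delta:=\{\sd_\initialset\le -\affine^{-1}(\delta)\tau\}$, and $E^*\subset \initialset_\delta$.

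Fix $G\Subset\Omega$ with $E^*\subset G$; the target is $\anisotropicperimeter(E^*)\le \anisotropicperimeter(G)-\delta|G\setminus E^*|$. First I perform an \emph{outer reduction}: the $\delta$-mean convexity of $\initialset$ combined with Proposition \ref{prop:mean_convex_propert}(c) gives $\anisotropicperimeter(G\cap\initialset)\le\anisotropicperimeter(G)-\delta|G\setminus\initialset|$, so that it suffices to prove the analogous bound for the reduced competitor $G':=G\cap\initialset\subset\initialset$ (which still contains $E^*$). The reduction then absorbs the layer $G\setminus\initialset$ at cost $\delta|G\setminus\initialset|$ exactly.

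The core of the proof is then to establish that $E^*$ is a minimizer not merely of $J$ but of the shifted functional $J_\delta(E):=J(E)-\delta|E\cap\initialset|$. Granted this, testing $J_\delta(E^*)\le J_\delta(G')$ and using $h\le 0$ on $G'\subset\initialset$ yields
\begin{equation*}
\anisotropicperimeter(E^*)\le \anisotropicperimeter(G')+\int_{G'\setminus E^*}h\,dx-\delta|G'\setminus E^*|\le \anisotropicperimeter(G')-\delta|G'\setminus E^*|,
\end{equation*}
which combined with the outer reduction gives the required inequality. To prove shifted minimality I use Theorem \ref{teo:compare}: its monotonicity assertion shows that the minimal minimizer of $J$ with forcing $h-\delta\chi_\initialset$ contains $E^*$; the reverse inclusion follows because $h\le -\delta$ on $E^*$ (so the two forcings agree, modulo a favorable sign, on $E^*$) and because the $\delta$-mean convexity of $\initialset$ obstructs any propagation of the minimizer across $\partial\initialset$.

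The main obstacle is this last inclusion: establishing that lowering the forcing by $\delta$ inside $\initialset$ does not enlarge the minimal minimizer. This is the step that replaces the Anzelotti calibration of \cite{ChN:2022} and relies crucially on Theorem \ref{teo:compare} together with the Minkowski-type inclusion of Lemma \ref{lem:inclusion_of_minimizers}(ii); in particular, the bound $\sd_{E^*}\ge \sd_\initialset+\affine^{-1}(\delta)\tau$ is what upgrades the monotonicity of minimal minimizers from a soft statement into the quantitative $\delta$-mean convexity of $E^*$.
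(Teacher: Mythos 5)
Your reduction steps are fine (the outer reduction via Proposition \ref{prop:mean_convex_propert}(c), the inclusion $E^*\subset\initialset$ and the bound $h\le-\delta$ on $E^*$ from Lemma \ref{lem:inclusion_of_minimizers}), but the central claim — that the minimal minimizer $E^*$ of $J(E)=\anisotropicperimeter(E)+\int_E h\,dx$ also minimizes the shifted functional $J_\delta(E)=J(E)-\delta|E\cap\initialset|$ — is false in general, and the comparison principle cannot deliver it. Theorem \ref{teo:compare} only gives the easy inclusion (minimal minimizer of $J$ contained in every minimizer of $J_\delta$); for the reverse inclusion nothing obstructs the $J_\delta$-minimizer from expanding strictly \emph{inside} $\initialset$: the $\delta$-mean convexity of $\initialset$ only prevents it from crossing $\p\initialset$ (by the argument of Lemma \ref{lem:inclusion_of_minimizers}(i)), not from leaving $E^*$. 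Concretely, take $\norm$ Euclidean, $\affine(r)=r$, $\initialset=B_R$ (which is $\delta$-mean convex for $\delta\le (n-1)/R$) and $\tau$ small: the minimizer of $J$ is the ball of radius $r_\tau$ solving $\tfrac{n-1}{r}=\tfrac{R-r}{\tau}$, while the minimizer of $J_\delta$ is the strictly larger ball solving $\tfrac{n-1}{r}=\tfrac{R-r}{\tau}+\delta$ (still contained in $B_R$), so $E^*$ is not a $J_\delta$-minimizer and $J_\delta(E^*)$ is strictly beaten. Heuristically this is forced by the Euler--Lagrange equations: a $J$-minimizer has $\kappa^\norm=-h$ on its boundary, a $J_\delta$-minimizer has $\kappa^\norm=-h+\delta$, and these are incompatible except in degenerate cases. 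The fact that the two forcings "agree modulo a favorable sign on $E^*$" is irrelevant, since the perturbation acts on $\initialset\setminus E^*$, exactly where the competitor grows.

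The missing idea, which is how the paper actually replaces the Anzellotti pairing of \cite{ChN:2022}, is to exploit the whole one-parameter family of minimizers $E_s$ of $\shorthandgeneralfunctional(\cdot;\initialset,s)$ for $s\in(0,\tau]$, not a single shifted problem at step $\tau$. One first shows $E_{s''}\subset E_{s'}\subset\initialset$ for $s'<s''$ (comparison, since $s\mapsto \sd_{\initialset}/s$ decreases in $\initialset$), that $|\initialset\setminus E_s|\to0$ as $s\to0^+$, and continuity of $s\mapsto E_s$ at $\tau$ from the left/right towards the maximal/minimal minimizer. Then, over a fine partition $\underline{s}=s_0<\dots<s_N=\overline{s}$, the minimality of $E_{s_i}$ tested with $E_{s_i}\cup\bigl(G\cap[E_{s_{i-1}}\setminus E_{s_i}]\bigr)$ together with $\affine(\d_{\initialset}/s_{i-1})\ge\delta$ on $E_{s_{i-1}}$ (from \eqref{massage_massage}) gives $\anisotropicperimeter(E_{s_{i-1}}\cap G)-\anisotropicperimeter(E_{s_i}\cap G)\ge(\delta-\epsilon)|G\cap[E_{s_{i-1}}\setminus E_{s_i}]|$; summing telescopically and combining with the mean convexity of $E_{\underline{s}}$ and the $\delta$-mean convexity of $\initialset$, then letting $\epsilon,\underline{s}\to0$ and $\overline{s}\to\tau^\pm$, accumulates the full term $\delta|G\setminus E_\tau|$ layer by layer. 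Your proposal has no substitute for this layerwise accumulation, so the gap is essential rather than cosmetic.
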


\begin{proof}
For any $s>0$ let $E_s$ be a minimizer of $\shorthandgeneralfunctional
(\cdot;\initialset,s).$
\smallskip

{\it Step 1:} For any $0<
\smallers
< 
\largers$
$$
E_{\largers}\subset E_{\smallers}\subset \initialset.
$$
The inclusion $E_{\largers},E_{\smallers}\subset \cl{\initialset}$ follows 
from Lemma \ref{lem:inclusion_of_minimizers} (i). 
To prove the first inclusion it is enough to 
observe that $s\mapsto \frac{\sd_{\initialset}}{s}$ 
is strictly decreasing in $\initialset.$ 
Now the inclusion follows from the comparison principle 
in Corollary \ref{cor:strong_comparison} for the prescribed mean curvature functional.
\smallskip

{\it Step 2:} 
$$
\lim\limits_{s\searrow 0}|\initialset\setminus E_s|=0.
$$
Note that this assertion was already shown in \eqref{hohosa} under the extra assumption $|\p \initialset|=0$. Here we do not 
have such a regularity. By  minimality 
$$
\anisotropicperimeter(E_{s}) + \int_{E_{s}}\sveloc{\initialset}{x}{s}~dx \le 
\anisotropicperimeter(\initialset) + \int_{\initialset}\sveloc{\initialset}{x}{s}~dx,
$$
and using $E_{s}\subset \initialset$, 
$$
\anisotropicperimeter(E_{s}) + \int_{\initialset\setminus E_{s}}\veloc{\initialset}{x}{s}~dx \le 
\anisotropicperimeter(\initialset).
$$
This, the monotonicity of $s\mapsto E_{s}$ and the openness of $E_{s}$ and $E_0$ imply $E_{s}\overset{L^1}{\to} E_0$ and $\anisotropicperimeter(E_{s})\to 
\anisotropicperimeter(E_0)$ as $s\searrow0.$
\smallskip

{\it Step 3.} 
$$
\lim\limits_{s\nearrow \tau}\,|E_s\setminus E_\tau^*|=0 
\quad \text{and} \quad 
\lim\limits_{s\searrow \tau}\,|E_{\tau *}\setminus E_s|=0. 
$$
We start by proving the first equality. Consider any sequence $s_i\nearrow \tau.$ Since $P_\phi(E_{s_i})\le P_\phi(E_0)$ and $E_{s_i}\subset E_0$ for any $i\ge1,$ there exists $Q\subset E_0$ such that, up to a further not relabelled subsequence, $E_{s_i}\searrow Q$ in $L^1(\R^n).$ By the $L^1$-lower semicontinuity of $\shorthandgeneralfunctional(\cdot;E_0,\tau),$ $Q$ is its minimizer. Moreover, since $E_{s_i}\supset E_{s_{i+1}},$ we have also $E_{s_i}\supset Q$ for any $i\ge1.$
By step 1, $E_{s_i}\supset E_{\tau}^*,$ and hence, we cannot have 
$|E_{\tau}^*\setminus Q|>0.$ Now arbitrariness of $s_i$ implies $E_s\searrow E_{\tau}^*$ as $s\nearrow \tau.$

The proof of the second equality is similar.
\smallskip

{\it Step 4.} Now we prove the $\delta$-mean convexity of the minimal and maximal minimizers.

 Fix any $G\in \cS^*$ with $G\Subset\Omega,$ $\epsilon\in(0,\delta)$ and $0<\underline{s}<\overline{s}.$ 
For any $N>1$ let 
$$
s_i:= \underline{s} + \frac{(\overline{s} - \underline{s})i}{N}, \quad  i=0,\ldots,N.
$$
Possibly slightly perturbing $G$ we may assume that 
$$
\sum_{i=0}^N \cH^{n-1}(\p^*G \cap \p^*E_{s_i}) = 0,\quad G_i:=G\cap [E_{s_{i-1}}\setminus E_{s_i}],\quad i=1,\ldots,N.
$$
Since $\initialset$ is bounded and $\affine$ is continuous, there exists $N>1$ such that
\begin{equation}\label{jivotnom}
\affine\Big(\tfrac{\d_{E_0}(x)}{s_i}\Big) \ge \affine\Big(\tfrac{\d_{E_0}(x)}{s_{i-1}}\Big) - \epsilon,\quad x\in E_0,\quad i=1,\ldots,N.
\end{equation}
Thus, by the minimality of $E_{s_i}$ and \eqref{jivotnom}
\begin{multline*}
\anisotropicperimeter(E_{s_{i-1}}\cap G) - \anisotropicperimeter(E_{s_i}\cap G) =  
\anisotropicperimeter(E_{s_i}\cup G_i) - \anisotropicperimeter(E_{s_i}) \\
\ge \int_{G_i} \veloc{E_0}{x}{s_i}~dx \ge 
\int_{G_i} \veloc{E_0}{x}{s_{i-1}}~dx - \epsilon|G_i|. 
\end{multline*}
By  \eqref{massage_massage} 
$$
\affine\Big(\tfrac{\d_{E_0}(x)}{s_{i-1}}\Big) = \affine\Big(\tfrac{-\sd_{E_0}(x)}{s_{i-1}}\Big) \ge \affine\Big(\tfrac{-\sd_{E_{s_{i-1}}}(x) + \affine^{-1}(\delta)s_{i-1}}{s_{i-1}}\Big) \ge \delta
\quad \text{for  $x\in E_{s_{i-1}},$}
$$
and therefore,
$$
\anisotropicperimeter(E_{s_{i-1}}\cap G) - 
\anisotropicperimeter(E_{s_i}\cap G) \ge (\delta-\epsilon)|G_i|,\quad i=1,\ldots,N.
$$
Summing these inequalities we get 
\begin{equation}\label{telefon_nummer}
\anisotropicperimeter(E_{\underline{s}}\cap G) - \anisotropicperimeter(E_{\overline{s}}\cap G) \ge (\delta-\epsilon)|G\cap [E_{\underline{s}} \setminus E_{\overline{s}}]|.
\end{equation}
Moreover, since $E_{\underline{s}}\subset \initialset$ is mean convex and $\initialset$ is $\delta$-mean convex, applying Proposition \ref{prop:mean_convex_propert} (c) twice (first with $E_{\underline{s}}$ and $\delta=0$ and then with $\initialset$ and $\delta$) we obtain 
$$
\anisotropicperimeter(E_{\underline{s}} \cap G) = \anisotropicperimeter(E_{\underline{s}} \cap [\initialset\cap G]) \le \anisotropicperimeter(\initialset\cap G) \le 
\anisotropicperimeter(G) - \delta |G\setminus \initialset|.
$$
Inserting this in \eqref{telefon_nummer} 
$$
\anisotropicperimeter(G) - \delta|G\setminus \initialset|  \ge 
\anisotropicperimeter(E_{\overline{s}}\cap G) + (\delta-\epsilon)|G\cap [E_{\underline{s}} \setminus E_{\overline{s}}]|,
$$
and hence, letting $\epsilon,\underline{s} \to 0^+$ and using step 2 we get 
$$
\anisotropicperimeter(G) \ge \anisotropicperimeter(E_{\overline{s}}\cap G) 
+ \delta|G \setminus E_{\overline{s}}|.
$$
Finally, letting $\overline{s}\searrow\tau$ and 
$\overline{s}\nearrow\tau,$ using step 3 and the $L^1$-lower semicontinuity of $\anisotropicperimeter,$ we get 
$$
\anisotropicperimeter(G) \ge \anisotropicperimeter(E_{\tau*}\cap G) + 
\delta|G \setminus E_{\tau*}|
\quad\text{and}\quad
\anisotropicperimeter(G) \ge \anisotropicperimeter(E_\tau^* \cap G) + 
\delta|G \setminus E_\tau^*|.
$$
Thus, both $E_\tau^*$ and $E_{\tau*}$ are $\delta$-mean convex by 
Proposition \ref{prop:mean_convex_propert}.
\end{proof}

\begin{corollary}[\textbf{Mean convexity of minimizers}]
\label{cor:mean_convexity_of_mininizers}
Let $\initialset\Subset\Omega$ be $\delta$-mean convex and  $\tau>0.$ Then every minimizer $E_\tau$ of $\shorthandgeneralfunctional
(\cdot;\initialset,\tau)$ is $\delta$-mean convex. 
\end{corollary}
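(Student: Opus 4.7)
The case $\delta=0$ is already covered by Lemma \ref{lem:inclusion_of_minimizers}(iii), so I focus on $\delta>0$. Let $E_\tau$ be any minimizer and $E_\tau^*$ the maximal minimizer of $\shorthandgeneralfunctional(\cdot;\initialset,\tau)$; by Proposition \ref{prop:mean_convexity_of_minimal_and_maximal_minimizers}, $E_\tau^*$ is $\delta$-mean convex in $\Omega$. The plan is to verify the characterization of $\delta$-mean convexity from Proposition \ref{prop:mean_convex_propert}(b), namely that for every $F\Subset\Omega$,
$$
P_\norm(E_\tau)+\delta|F\setminus E_\tau|\le P_\norm(E_\tau\cup F).
$$

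The key technical input is the pointwise bound $\affine(\sd_\initialset(x)/\tau)\le -\delta$ for a.e.\ $x\in E_\tau^*\setminus E_\tau$. This comes from applying Lemma \ref{lem:inclusion_of_minimizers}(ii) to the minimizer $E_\tau^*$: one obtains $\sd_{E_\tau^*}\ge\sd_\initialset+\affine^{-1}(\delta)\tau$ in $\Rn$, and since $\sd_{E_\tau^*}(x)\le 0$ on $E_\tau^*$, this forces $\sd_\initialset(x)\le-\affine^{-1}(\delta)\tau$ there; the oddness and strict monotonicity of $\affine$ then yield the bound.

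With this in hand, I would decompose $F\setminus E_\tau$ into the disjoint pieces $(F\cap E_\tau^*)\setminus E_\tau$ and $F\setminus E_\tau^*$ and estimate each separately. First, testing the minimality of $E_\tau$ against the competitor $E_\tau\cup(F\cap E_\tau^*)$ and invoking the pointwise bound on $(F\cap E_\tau^*)\setminus E_\tau\subset E_\tau^*\setminus E_\tau$ gives
$$
P_\norm(E_\tau\cup(F\cap E_\tau^*))\ge P_\norm(E_\tau)+\delta|(F\cap E_\tau^*)\setminus E_\tau|.
$$
Second, applying the $\delta$-mean convexity of $E_\tau^*$ via Proposition \ref{prop:mean_convex_propert}(c) to the set $E_\tau\cup F$, together with the identity $E_\tau^*\cap(E_\tau\cup F)=E_\tau\cup(F\cap E_\tau^*)$ (which holds because $E_\tau\subset E_\tau^*$), gives
$$
P_\norm(E_\tau\cup(F\cap E_\tau^*))+\delta|F\setminus E_\tau^*|\le P_\norm(E_\tau\cup F).
$$
Chaining these two inequalities and summing the measure terms closes the argument.

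The main obstacle is the pointwise bound on $\affine(\sd_\initialset/\tau)$ in $E_\tau^*\setminus E_\tau$: it is what allows the weighted inequality coming from the minimality of $E_\tau$ to be promoted to a $\delta$-mean convexity estimate, and it relies on applying Lemma \ref{lem:inclusion_of_minimizers}(ii) to the specific minimizer $E_\tau^*$ (for which $\delta$-mean convexity is already known by Proposition \ref{prop:mean_convexity_of_minimal_and_maximal_minimizers}) rather than to $E_\tau$ directly. Once this bound is available, the rest is a direct combination of the minimality of $E_\tau$ and the $\delta$-mean convexity of $E_\tau^*$.
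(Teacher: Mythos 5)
Your proof is correct and follows essentially the same route as the paper: both combine the $\delta$-mean convexity of the maximal minimizer $E_\tau^*$ (Proposition \ref{prop:mean_convexity_of_minimal_and_maximal_minimizers}), the bound $\affine(\d_{\initialset}/\tau)\ge\delta$ on $E_\tau^*$ coming from \eqref{massage_massage}, and the minimality of $E_\tau$ tested against the same competitor $E_\tau\cup(F\cap E_\tau^*)$. The only (cosmetic) difference is that you verify the characterization in Proposition \ref{prop:mean_convex_propert}(b) by applying the mean convexity of $E_\tau^*$ to $E_\tau\cup F$, while the paper verifies characterization (c) by applying it to $F$ itself.
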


\begin{proof}
If $\delta=0,$ the assertion follows from Lemma \ref{prop:mean_convexity_of_minimal_and_maximal_minimizers} (c), so we assume $\delta>0$ and consider the minimal and maximal  minimizers $E_{\tau*}\subset E_\tau\subset E_\tau^*.$ Then,
 for any $G\Subset\Omega$, by the $\delta$-mean convexity of $E_\tau^*$
(Proposition \ref{prop:mean_convexity_of_minimal_and_maximal_minimizers}),
\begin{equation}\label{xoroshoego_dnya}
\anisotropicperimeter(G) \ge 
\anisotropicperimeter(E_\tau^*\cap G) + \delta|G\setminus E_\tau^*|.
\end{equation}
Moreover, possibly slightly perturbing $G$ we assume that $\cH^{n-1}(\p^*G\cap\p^*E_\tau)= 0$ so that by the minimality of $E_\tau\subset E_\tau^*$
\begin{align*}
\anisotropicperimeter(E_\tau^*\cap G) - 
\anisotropicperimeter(E_\tau\cap G) = & 
\anisotropicperimeter(E_\tau\cup [G\cap(E_\tau^*\setminus E_\tau)]) - 
\anisotropicperimeter(E_\tau) 
\ge  \int_{G\cap(E_\tau^*\setminus E_\tau)} \veloc{\initialset}{x}{\tau}~dx.
\end{align*}
Moreover, by  \eqref{massage_massage}
$$
\veloc{\initialset}{x}{\tau} = 
\affine\Big( \tfrac{-\sd_{\initialset}(x) }{\tau}\Big)
\ge \affine\Big( \tfrac{-\sd_{E_\tau^*}(x) + \affine^{-1}(\delta)\tau}{\tau}\Big) \ge \delta
\quad \text{for $x\in E_\tau^*,$}
$$
and hence, 
$$
\anisotropicperimeter(E_\tau^*\cap G) \ge 
\anisotropicperimeter(E_\tau\cap G) +\delta |G\cap(E_\tau^*\setminus E_\tau)|.
$$
Adding this to \eqref{xoroshoego_dnya} we get 
$$
\anisotropicperimeter(G) \ge 
\anisotropicperimeter(E_\tau\cap G) + \delta |G\setminus E_\tau|,
$$
i.e., $E_\tau$ is $\delta$-mean convex.
\end{proof}

\begin{proof}[Proof of Theorem \ref{teo:meanconvex_intro}]
Let $\{E(\tau_j,k)\}$ be a family of flat flows starting from $\initialset$ and satisfying 
\begin{equation}\label{luchshe_zakonchit}
\lim\limits_{j\to+\infty}\,|E(\tau_j,\intpart{t/\tau_j}) \Delta E(t)| = 
0\quad\text{for any $t\ge0$}
\end{equation}
for some $E(\cdot)\in \GMM(\shorthandgeneralfunctional, \initialset).$ By Lemma \ref{lem:inclusion_of_minimizers} (a) 
$$
\cl{\initialset}\supset \cl{E(\tau_j,1)} \supset \cl{E(\tau_j,2)} \supset \ldots
$$
and by Corollary \ref{cor:mean_convexity_of_mininizers} each $E(\tau_j,k)$ is $\delta$-mean convex. Hence, $t\mapsto E(\tau_j,\intpart{t/\tau_j})$ is a nonincreasing map of $\delta$-mean convex sets. Then by \eqref{luchshe_zakonchit} and Proposition \ref{prop:mean_convex_propert} (d) each $E(t)$ is $\delta$-mean convex and the map $t\mapsto E(t)$ is nonincreasing. Therefore, by the definition of mean convexity, so is $t\mapsto \anisotropicperimeter(E(t)).$
\end{proof}

\section{Consistency with smooth flows:
proof of Theorem \ref{teo:consistency}}
\label{sec:consistency_with_smooth_flows}
If, for an anisotropy $\norm$, the map $\xi\mapsto \norm(\xi) - \lambda|\xi|$ is also an anisotropy in $\Rn$ for some $\lambda>0,$ we say $\norm$
is elliptic. 

Suppose $\norm$ is a $C^{3+\Holderexponent}$-elliptic anisotropy, and functions $\affine$ and $\forcing$ satisfy Hypothesis \eqref{hyp:main},
$\affine \in C^\Holderexponent(\R)$ and $\forcing\in C^{\Holderexponent}
(\Rn) \cap L^\infty(\Rn)$, for some $\Holderexponent\in(0,1]$. 

\begin{definition}[\textbf{Stable smooth flow}]
\label{def:stable_smooth_flow}
$\,$
\begin{itemize}
\item[\rm (a)] A $C^1$-in time family $\{S(t)\}_{t\in [0,T^\dag)}$ of $C^2$-subsets 
of $\Rn$ is called a generalized power 
smooth mean curvature flow with 
driving force $\forcing$ starting from $S_0$, if 
\begin{equation*}
\begin{cases}
\affine(\normalvelocity_{S(t)}(x)) = - \kappa_{S(t)}^\norm
(x) - \forcing(x) & \text{for $t\in (0,T^\dag)$ and $x\in \p S(t),$}\\[2mm]
S(0)=S_0,
\end{cases} 
\end{equation*}
where as usual $\normalvelocity_{S(t)}$ and $\kappa_{S(t)}^\norm$
are the normal velocity and the anisotropic mean curvature
of $\partial S(t)$, respectively.

\item[\rm (b)] The family $\{S(t)\}_{t\in [0,T^\dag)}$ 
is called  \emph{stable}
if for any $T\in (0,T^\dag)$ 
there are $\rho
=\rho(T)>0$, $\sigma = \sigma(T)>0$ such that 
for any $a\in [0,T)$ 
there exist families $L^\pm[r,s,a,t]$ for $r\in [0,\rho],$ $s\in [0,\sigma]$ and $t\in [a,T]$ of $C^{2+\Holderexponent}$-subsets of $\Rn$ 
smoothly depending\footnote{For instance, $(r,s,a,t)\mapsto \sd_{L^\pm[r,s,a,t]}$ smoothly varies, see also \cite[Corollary 7.2]{ATW:1993}.} on $r,s,a,t$,  
such that

\begin{itemize}
\item $L^\pm[0,0,a,t] = S(t)$ for all $t\in [a,T],$ 

\item $L^\pm[r,s,a,a] = \{x \in \Rn:\,\,\sd_{S(a)}(x)<\pm(r+s)\}$ for all $r\in [0,\rho]$ and $s\in[0,\sigma],$ 

\item for any $r\in[0,\rho]$ and $s\in[0,\sigma],$  
\begin{equation}\label{stable_general_mcf}
\affine(\normalvelocity_{L^\pm[r,s,a,t]}(x)) = - \kappa_{L^\pm[r,s,a,t]}^\norm(x) - \forcing(x) \pm s \quad \text{for $t\in [a,T]$ and $x\in \p L^\pm[r,s,a,t].$}
\end{equation}

\end{itemize}

\end{itemize}

\end{definition}

Using the signed distance functions 
we can rewrite \eqref{stable_general_mcf} as 
\begin{equation*}
\affine\Big(
\tfrac{\p}{\p t} 
\sd_{L^\pm[r,s,a,t]}(x)
\Big) = - 
\kappa_{L^\pm[r,s,a,t]}^\norm(x) 
- \forcing(x) \pm s \quad \text{for $t\in [a,T]$ and $x\in \p L^\pm[r,s,a,t],$}
\end{equation*}
where $\kappa_{L^\pm[r,s,a,t]}^\norm$ stands for the $\norm$-mean 
curvature of $L^\pm[r,s,a,t]$.

\begin{proposition}[\textbf{Properties of stable flows}]
\label{prop:properties_of_stable_flows}
Let $\{S(t)\}_{t\in [0,T^\dag)}$ be a stable
flow as above starting from a bounded set $S_0$ and for $T\in (0,T^\dag)$ 
let $\rho,\sigma,$ $L^\pm[r,s,a,t]$ be as in Definition \ref{def:stable_smooth_flow} (b).

\begin{itemize}
\item[\rm(a)] Assume that $r', r'' \in [0, \rho]$, 
$r'\le r''$ and $s', s'' \in [0, \sigma]$, 
$s'\le s''$ with $r'+s'<r''+s''.$  
Then 
$$
L^-[r'',s'',a,t]\Subset L^-[r',s',a,t]
\quad\text{and}\quad 
L^+[r',s',a,t]\Subset L^+[r'',s'',a,t]
$$
for any $t\in [a,T].$ 

\item[\rm(b)] For any $s\in (0,\sigma)$ there exists $\tau_2\in (0,T/2)$ such that for any $\tau\in (0,\tau_2),$ $r\in [0,\rho]$ 
and\footnote{If $a+\tau >T$ the statement becomes trivial.
} $t\in [a+\tau,T]$ 
\begin{equation*}
\affine\Big(\tfrac{  \sd_{L^+[r,s,a,t-\tau]}}{\tau}\Big) > - \kappa_{L^+[r,s,a,t]}^\norm- \forcing + \frac s2 \quad \text{on $\p L^+[r,s,a,t]$}
\end{equation*}
and 
\begin{equation*}
\affine\Big(\tfrac{  \sd_{L^-[r,s,a,t-\tau]}}{\tau}\Big) < - \kappa_{L^-[r,s,a,t]}^\norm -  \forcing - \frac s2 \quad \text{on $\p L^-[r,s,a,t].$}
\end{equation*}

\item[\rm(c)] There exists $t^* = t^*(T, \rho, \sigma, S(\cdot)) \in (0,\rho/64)$ such that 
\begin{equation*}
L^-[\rho,s,a,a+t'] \subset  L^-[\rho/2+t',s,a,a] 
\quad \text{and}\quad 
L^+[\rho/2-t',s,a,a]\subset L^+[\rho,s,a,a+t']
\end{equation*}
for all $s\in [0,\sigma], $ $a\in [0,T)$ and $t'\in [0,t^*]$ with $a+t'\le T.$

\item[\rm(d)] There exists a continuous increasing function $\atwfunction:\R_0^+\to\R_0^+$ with $\atwfunction
(0)=0$ such that for all $s\in [0,\sigma],$ $a\in[0,T),$ $t\in[a,T]$
$$
\sup_{x\in \p L^\pm[0,s,a,t]}\,\dist(x,\p L^\pm[0,0,a,t]) \le \atwfunction(s).
$$
\end{itemize}
\end{proposition}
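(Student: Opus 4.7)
\medskip
\noindent\textbf{Plan of proof.} All four assertions follow from the defining equation \eqref{stable_general_mcf}, the smooth dependence of the families on the four parameters $(r,s,a,t)$, and the uniformity that compactness of $[0,T]$ affords. I would handle them in the order (a)$\to$(d)$\to$(c)$\to$(b), since (b) uses essentially everything else.

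For (a), the plan is a comparison argument. At $t=a$ the sets $L^\pm[r,s,a,a]$ are explicit sub/super-level sets of the smooth function $\sd_{S(a)}$, so the hypothesis $r'+s'<r''+s''$ yields the strict compact inclusion
$$
L^-[r'',s'',a,a]\Subset L^-[r',s',a,a],
\qquad
L^+[r',s',a,a]\Subset L^+[r'',s'',a,a]
$$
at initial time (here one uses that $\partial S(a)$ is smooth, so the level sets of $\sd_{S(a)}$ are pairwise disjoint closed hypersurfaces). Next, for $L^-$, the flow equation $\affine(v)=-\kappa^\norm-g-s$ together with the strict monotonicity of $\affine$ shows that the larger driving term $-s''\le-s'$ only makes the inner flow shrink at least as fast as the outer one; combined with the (classical) parabolic strong comparison principle for smooth solutions of \eqref{stable_general_mcf} (applied to $\sd_{L^-[r'',s'',a,t]}-\sd_{L^-[r',s',a,t]}$ near the common boundary), the strict compact inclusion is preserved for all $t\in[a,T]$. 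The case of $L^+$ is analogous, with inequalities reversed.

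For (d), smooth dependence of $(r,s,a,t)\mapsto\sd_{L^\pm[r,s,a,t]}$ and compactness of $[0,T]$ give the uniform estimate
$$
\sup_{a\in[0,T),\,t\in[a,T],\,x\in\Rn}\,\bigl|\sd_{L^\pm[0,s,a,t]}(x)-\sd_{L^\pm[0,0,a,t]}(x)\bigr|\le C\,s
$$
for some $C=C(T)$. For $x\in\p L^\pm[0,s,a,t]$ the left-hand side equals $|\sd_{S(t)}(x)|$, which is bounded from below by $\dist(x,\p S(t))$, giving (d) with $h(s)=Cs$ (or one can take the supremum over $s'\le s$). For (c), one first obtains from \eqref{stable_general_mcf} a uniform bound $V=V(T,\rho,\sigma,S(\cdot))<+\infty$ on the normal velocity of every boundary $\p L^\pm[r,s,a,t]$, valid on the compact parameter range, using continuity of $\affine^{-1}$ and the $C^2$ bounds on $\kappa^\norm$ (these follow from the smooth dependence hypothesis in Definition \ref{def:stable_smooth_flow}). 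The boundary therefore displaces by at most $Vt'$ in time $t'$, so any $x\in L^-[\rho,s,a,a+t']$ has a point $y\in L^-[\rho,s,a,a]=\{\sd_{S(a)}<-(\rho+s)\}$ within distance $Vt'$; by $1$-Lipschitzianity of $\sd_{S(a)}$,
$$
\sd_{S(a)}(x)\le\sd_{S(a)}(y)+Vt'<-(\rho+s)+Vt'\le-(\rho/2+t'+s)
$$
as soon as $(V+1)t'\le\rho/2$. Choosing $t^*:=\min\{\rho/64,\rho/(2V+2)\}$ gives the first inclusion in (c), and the analogous computation with $L^+$ gives the second.

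For (b), the plan is a first-order Taylor expansion in time of the signed distance function along the flow. Fix $r\in[0,\rho]$, $s\in[0,\sigma]$, $a\in[0,T)$, $t\in[a,T]$ and $x\in\p L^+[r,s,a,t]$, so that $\sd_{L^+[r,s,a,t]}(x)=0$. By smooth dependence, $(u,y)\mapsto\sd_{L^+[r,s,a,u]}(y)$ is $C^1$ in time (with Hölder derivative of exponent $\Holderexponent$ in $u$, inherited from $f,g\in C^\Holderexponent$), and
$$
\partial_u\sd_{L^+[r,s,a,u]}(x)\big|_{u=t}=-\normalvelocity_{L^+[r,s,a,t]}(x),
$$
so
$$
\frac{\sd_{L^+[r,s,a,t-\tau]}(x)}{\tau}=\normalvelocity_{L^+[r,s,a,t]}(x)+O(\tau^{\Holderexponent})
$$
with an $O(\tau^{\Holderexponent})$ bound uniform in $r,s,a,t,x$ (by compactness). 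Applying the uniformly continuous function $\affine$ and using \eqref{stable_general_mcf} we obtain
$$
\affine\!\Big(\tfrac{\sd_{L^+[r,s,a,t-\tau]}(x)}{\tau}\Big)=-\kappa^\norm_{L^+[r,s,a,t]}(x)-g(x)+s+\omega(\tau),
$$
with $\omega(\tau)\to 0$ as $\tau\to 0^+$ uniformly in all remaining parameters. Choosing $\tau_2$ so that $|\omega(\tau)|<s/2$ for $\tau\in(0,\tau_2)$ yields the first inequality of (b); the second is identical, with signs reversed. The main technical point (and the only genuinely delicate one) is ensuring the uniformity of $\omega(\tau)$ across the four-parameter family, which is exactly what the smooth-dependence hypothesis on $L^\pm[r,s,a,t]$ in Definition \ref{def:stable_smooth_flow}(b) is designed to deliver.
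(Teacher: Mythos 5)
Your argument is correct and follows essentially the same route as the paper, which proves (a) by the strong comparison principle for smooth flows and disposes of (b)–(d) by appealing to the smooth dependence of $L^\pm[r,s,a,t]$ on its parameters, the H\"older regularity of $\affine$ and the continuity and boundedness of $\forcing$, referring to \cite[Corollary 7.2]{ATW:1993} for details. You have simply written out the compactness/Taylor-expansion and velocity-bound details that the paper delegates to that reference (your choice $t^*=\min\{\rho/64,\rho/(2V+2)\}$ should be made strictly less than $\rho/64$, e.g.\ $\rho/65$, to match the statement, a cosmetic point).
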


\begin{proof}
By smoothness, the family $\{S(t)\}$ is uniformly bounded. Therefore, 
assertion (a) follows from 
the strong comparison principle (see e.g. \cite[Chapter 2]{Mantegazza:2011}). 
The remaning assertions follow from the smooth dependence of $L$ on its variables, the H\"older regularity of $\affine$ and the continuity and boundedness of $\forcing$. We refer to \cite[Corollary 7.2]{ATW:1993} for more details in the mean curvature setting.
\end{proof}

\begin{remark}\label{rem:comparison}
As in the standard mean curvature case, using the Hamilton-type arguments \cite[Chapter 2]{Mantegazza:2011}, one can show the 
following comparison principle: 
if $A_0\Subset B_0,$ and $\{A(t)\}_{t\in [0,T)}$ and $\{B(t)\}_{t\in [0,T)}$ are generalized power smooth mean curvature flows starting from $A_0$ and $B_0,$ respectively, then $A(t)\Subset B(t)$ for any $t\in [0,1).$
\end{remark}

Let $E(\cdot)$ be any $\GMM$ starting from the smooth bounded set 
$\initialset=S_0$ and let 
a sequence $\tau_j\to0^+$ and flat flows $E(\tau_j,k)$ be such that 
\begin{equation}\label{flats_converge}
\lim_{j\to+\infty}\,|E(\tau_j,\intpart{t/\tau_j}) \Delta E(t)| = 0\quad \text{for all $t\ge0.$} 
\end{equation}
In order to show Theorem 
\ref{teo:consistency}, it suffices to prove that for any $T\in (0,T^\dag)$
\begin{equation}\label{gmm_teng_smooth}
E(t) = S(t),\quad t\in [0,T). 
\end{equation}

Given $T\in(0,T^\dag)$ and $a\in [0,T),$ let $\rho,\sigma>0$ and $L^\pm[r,s,a,t]$ be as in Definition \ref{def:stable_smooth_flow} (b), and given $s\in [0,\sigma],$ let $\tau_2:=\tau_2(s)$ 
be given  by Proposition \ref{prop:properties_of_stable_flows} (b).
Let also $t^*$ be as in Proposition \ref{prop:properties_of_stable_flows} (c). 

The proof of \eqref{gmm_teng_smooth} 
basically follows applying inductively the following auxiliary lemma. 

\begin{lemma}\label{lem:vengriya_prezidenti}
Assume that $a \in [0,T)$ 
and $s\in (0,\sigma)$ 
are such that
\begin{equation}\label{good_inclusion091}
L^-[0,s,a,a] \subset 
E(\tau_j,k_0)\subset L^+[0,s,a,a],
\end{equation}
where $k_0:=\intpart{a/\tau_j}.$
Then there exists $\bar t\in(0,t^*]$ depending only on $t^*$ 
and $\rho$, such that 
\begin{equation}\label{eq:t_bar}
L^-[0,s,a,a+k\tau_j]\subset
E(\tau_j,k_0+k)\subset L^+[0,s,a,a+k\tau_j]
\end{equation}
for all $j\ge1$ with $\tau_j\in(0,\tau_2(s))$ and $k=0,1,\ldots,\intpart{\bar t/\tau_j}$ provided that $a+k\tau_j<T.$ Moreover, if $a+\bar t<T,$  for any $s\in(0,\sigma]$ with $\atwfunction(2s)<\sigma/4$ there exists 
$\indexsATW>1$ such that 
\begin{equation}\label{hatto_hindiya}
L^-[0,4\atwfunction(2s),a+\bar t,a+\bar t] \subset E(\tau_j,k_0+\bar k_j) \subset L^+[0,4\atwfunction(2s),a+\bar t,a+\bar t],
\end{equation}
whenever $j>\indexsATW$ and 
$\bar k_j:=\intpart{\bar t/\tau_j}.$
\end{lemma}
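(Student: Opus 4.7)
The plan is to establish \eqref{eq:t_bar} by induction on $k$ using the strict barrier inequalities from Proposition \ref{prop:properties_of_stable_flows}(b), and then to deduce \eqref{hatto_hindiya} by reinterpreting the final inclusion in terms of barriers anchored at the new base time $a+\bar t$, via Proposition \ref{prop:properties_of_stable_flows}(d) together with smooth dependence on $t$.

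For the inductive step, recall from \eqref{eq:prescur_funco} that minimizing $\shorthandgeneralfunctional(\cdot;E(\tau_j,k_0+k-1),\tau_j)$ is equivalent (up to an $E$-independent additive constant) to minimizing the prescribed mean curvature functional with forcing
\[
h_k(x):=\affine\!\Big(\tfrac{\sd_{E(\tau_j,k_0+k-1)}(x)}{\tau_j}\Big)+\forcing(x),
\]
and let $\tilde h_k$ denote the same expression with $E(\tau_j,k_0+k-1)$ replaced by $L^+[0,s,a,a+(k-1)\tau_j]$. If the induction hypothesis $E(\tau_j,k_0+k-1)\subset L^+[0,s,a,a+(k-1)\tau_j]$ holds, then $\sd_{E(\tau_j,k_0+k-1)}\ge \sd_{L^+[0,s,a,a+(k-1)\tau_j]}$ pointwise, whence $h_k\ge \tilde h_k$ by the monotonicity of $\affine$. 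By Proposition \ref{prop:properties_of_stable_flows}(b), valid for $\tau_j\in(0,\tau_2(s))$,
\[
\kappa^\norm_{L^+[0,s,a,a+k\tau_j]}+\tilde h_k > \tfrac{s}{2}>0 \quad\text{on } \p L^+[0,s,a,a+k\tau_j],
\]
so $L^+[0,s,a,a+k\tau_j]$ is a strict smooth supersolution for the prescribed mean curvature problem with forcing $\tilde h_k$, hence also for the pointwise larger forcing $h_k$. The strong comparison principle of Corollary \ref{cor:strong_comparison} then forces the variational minimizer to lie inside this smooth supersolution, yielding $E(\tau_j,k_0+k)\subset L^+[0,s,a,a+k\tau_j]$. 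The symmetric argument with $L^-$ and reversed inequalities produces the lower inclusion. The base case $k=0$ is precisely \eqref{good_inclusion091}, closing the induction; choosing $\bar t:=t^*$ ensures via Proposition \ref{prop:properties_of_stable_flows}(c) that $L^\pm[0,s,a,a+k\tau_j]$ remain well inside the admissible parameter range throughout the iteration.

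To upgrade to \eqref{hatto_hindiya}, set $k=\bar k_j=\intpart{\bar t/\tau_j}$, so that $0\le \bar t-\bar k_j\tau_j\le\tau_j$. Since $t\mapsto L^+[0,s,a,t]$ varies smoothly and $L^+[0,s,a,a+\bar t]\Subset L^+[0,2s,a,a+\bar t]$ by Proposition \ref{prop:properties_of_stable_flows}(a), for every $j$ larger than a suitable $\indexsATW$ we have
\[
E(\tau_j,k_0+\bar k_j)\subset L^+[0,s,a,a+\bar k_j\tau_j]\subset L^+[0,2s,a,a+\bar t].
\]
Proposition \ref{prop:properties_of_stable_flows}(d) bounds the Hausdorff distance between $\p L^+[0,2s,a,a+\bar t]$ and $\p S(a+\bar t)=\p L^+[0,0,a,a+\bar t]$ by $\atwfunction(2s)$; since both sets contain $S(a+\bar t)$, every $x\in L^+[0,2s,a,a+\bar t]$ satisfies $\sd_{S(a+\bar t)}(x)<2\atwfunction(2s)<4\atwfunction(2s)$, so
\[
L^+[0,2s,a,a+\bar t]\subset L^+[0,4\atwfunction(2s),a+\bar t,a+\bar t],
\]
where the right-hand side lies in the admissible parameter range because $4\atwfunction(2s)<\sigma$ by assumption. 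Combining these inclusions with their symmetric $L^-$ analogs delivers \eqref{hatto_hindiya}.

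The main obstacle is the inductive step: one has to transfer the strict differential barrier inequality of Proposition \ref{prop:properties_of_stable_flows}(b), written in terms of the forcing $\tilde h_k$ associated with the smooth flow, into a set-theoretic containment of the variational minimizer $E(\tau_j,k_0+k)$ associated with the different, larger forcing $h_k$. The bridge is that a strict supersolution for a smaller forcing is automatically a strict supersolution for any larger one, combined with the comparison principle of Corollary \ref{cor:strong_comparison}, which interweaves submodularity of $\anisotropicperimeter$ with the strict supersolution inequality to prevent the minimizer from leaking out of $L^+$.
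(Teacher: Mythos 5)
The second half of your argument (passing to $L^+[0,2s,a,a+\bar t]$ via Proposition \ref{prop:properties_of_stable_flows}(a), then to $L^+[0,4\atwfunction(2s),a+\bar t,a+\bar t]$ via (d) and smooth dependence) is essentially the paper's, but the inductive step for \eqref{eq:t_bar} has a genuine gap. You invoke Corollary \ref{cor:strong_comparison} to conclude that the minimizer $E(\tau_j,k_0+k)$ stays inside the smooth strict supersolution $L^+[0,s,a,a+k\tau_j]$, but that corollary compares two minimizers of $\shorthandgeneralfunctional$ arising from nested initial sets at positive mutual distance (and with $\forcing\equiv0$, by translation invariance); it says nothing about a variational minimizer versus a smooth set satisfying a strict differential inequality. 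The tool actually available for that purpose is Lemma \ref{lem:discrete_comparison}, and it does not close your induction either: its hypotheses include both $E\subset F$ \emph{and} $E_\tau\subset F_\tau$, i.e. the inclusion at the new time step is assumed, and the conclusion is only that $\p E_\tau\cap\p F_\tau=\emptyset$. So ``inclusion at step $k-1$ implies inclusion at step $k$'' cannot be obtained by a one-shot comparison at $r=0$.

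This is precisely why the paper's proof has two ingredients you omit. First, it uses Theorem \ref{teo:comparison_with_balls} (balls persist with radius shrinking at rate $C_8\tau$ per step) to confine the flat flow a priori: starting from \eqref{good_inclusion091} and Proposition \ref{prop:properties_of_stable_flows}(a),(c) one gets $L^-[\rho,s,a,a+i\tau_j]\subset E(\tau_j,k_0+i)\subset L^+[\rho,s,a,a+i\tau_j]$ for all $i\tau_j\le\bar t:=\min\{t^*,\rho/(128C_8)\}$ (note that $\bar t$ must carry the dependence on $\rho$ through $C_8$; taking $\bar t:=t^*$ as you do loses this). Second, with this confinement guaranteeing the admissible set of barrier parameters is nonempty, one slides $r$ down from $\rho$: defining $\bar r$ as the infimum of those $r$ for which $E(\tau_j,k_0+i)\subset L^+[r,s,a,a+i\tau_j]$ for all relevant $i$, continuity in $r$ produces, if $\bar r>0$, a first touching index $k$ at which both inclusions required by Lemma \ref{lem:discrete_comparison} hold, and Proposition \ref{prop:properties_of_stable_flows}(b) then yields the contradiction $\p E(\tau_j,k_0+k)\cap\p L^+[\bar r,s,a,a+k\tau_j]=\emptyset$; hence $\bar r=0$, which is \eqref{eq:t_bar}. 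Without the ball comparison and this first-touching argument, your induction does not go through as written.
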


\begin{proof}
We closely follow the arguments of the proof of the consistency in \cite{ATW:1993}. 
In view of \eqref{good_inclusion091} and Proposition 
\ref{prop:properties_of_stable_flows} (a)
\begin{equation}\label{bu_jangga}
L^-[\rho/64,s,a,a] \subset L^-[0,s,a,a] \subset 
E(\tau_j,k_0) \subset L^+[0,s,a,a] \subset L^+[\rho/64,s,a,a]. 
\end{equation}
Thus, by the definition of $L^\pm[\rho,s,a,a]$ 
$$
\begin{cases}
B_{\rho/64}(x)\subset E(\tau_j,k_0) & \text{if $x\in L^-[\rho/64,s,a,a],$}\\[2mm] 
B_{\rho/64}(x)\cap E(\tau_j,k_0) = \emptyset & \text{if $x\notin L^+[\rho/64,s,a,a].$}
\end{cases}
$$
Thus, applying Theorem \ref{teo:comparison_with_balls} we find a constant $C_8>1$ depending only on $n$, $\norm$, 
$\affine$, $\rho$ and $\|\forcing\|_\infty$ such that 
\begin{equation}\label{sammitga_tayyor}
\begin{cases}
B_{\rho/64 - C_8 i\tau_j}(x)\subset E(\tau_j,k_0+i) & \text{if $x\in 
L^-[\rho/64,s,a,a],$}\\[2mm] 
B_{\rho/64-C_8i\tau_j}(x)\cap E(\tau_j,k_0+i) = \emptyset & 
\text{if $x\notin L^+[\rho/64,s,a,a]$}
\end{cases}
\end{equation}
for all $0\le i\tau_j \le \frac{\rho}{128C_8}.$ 
By \eqref{bu_jangga} and \eqref{sammitga_tayyor} for such $i$
\begin{equation}\label{izzuddin_al_qassam}
L^-[\rho/32-C_{8}i\tau_j,s,a,a ] \subset E(\tau_j,k_0+i) \subset 
L^+[\rho/32-C_{8}i\tau_j,s,a,a].
\end{equation}
Now using the inequality 
$$
\frac{\rho}{32} - C_{8}i\tau_j \le 
\frac{\rho}{2} - i\tau_j
$$
(recall that $C_{8}>1$) 
and the definition of $L^\pm[r,s,a,a]$ in \eqref{izzuddin_al_qassam}, we find 
\begin{equation}\label{qassam_brigade}
L^-[\rho/2-i\tau_j,s,a,a] \subset E(\tau_j,k_0+i) \subset 
L^+[\rho/2-i\tau_j,s,a,a].
\end{equation}
Now if $i\tau_j\le t^*,$ by Proposition \ref{prop:properties_of_stable_flows} (c) and \eqref{qassam_brigade}
\begin{equation}\label{naqba}
L^-[\rho,s,a,a+i\tau_j] \subset E(\tau_j,k_0+i) \subset 
L^+[\rho,s,a,a+i\tau_j].
\end{equation}

Let us define 
$$
\bar t:=\min\Big\{t^*, \tfrac{\rho}{128C_{8}}\Big\}.
$$
By \eqref{naqba}
\begin{equation*}
L^-[\rho,s,a,a+i\tau_j ] \subset E(\tau_j,k_0+i) \subset 
L^+[\rho,s,a,a+i\tau_j],\quad i=0,1,\ldots,\intpart{\bar t/\tau_j}
\end{equation*}
provided $a+i\tau_j<T.$ We claim that for any $j>1$ with $\tau_j\in (0,\tau_2(s))$ 
\begin{equation}\label{urushlar}
L^-[0,s,a,a+i\tau_j] \subset E(\tau_j,k_0+i) \subset 
L^+[0,s,a, a+i\tau_j],\quad i=0,1,\ldots,\intpart{\bar t/\tau_j},
\end{equation}
with $a+i\tau_j<T.$
Indeed, let 
$$
\bar r:=\inf\Big\{r\in [0,\rho]:\,\, E(\tau_j,k_0+i) \subset 
L^+[r,s,a,a+i\tau_j],\quad i=0,\ldots,\intpart{\bar t/\tau_j},\,a+i\tau_j<T\Big\}.
$$
To prove the claim we need to show that 
\begin{equation}\label{eq:rbar=0}
\bar r=0.
\end{equation}

In view of \eqref{naqba} the infimum is taken over a nonempty set. By contradiction, assume that $\bar r>0.$ By the continuity of $L^+[\cdot,s,a,a+i\tau_j]$ at $r=\bar r,$ there exists the
smallest integer $k\le \intpart{\bar t/\tau_j}$ 
(clearly, $k>0$ by \eqref{bu_jangga} and the assumption $\bar r>0$) for which 
\begin{equation}\label{tunnel_berkit}
\p E(\tau_j,k_0+k) \cap \p L^+[\bar r,s,a,a+k\tau_j] \ne\emptyset.
\end{equation}
Moreover, by the minimality of $k$ and the definition of $\bar r$
$$
E(\tau_j,k_0+k-1)\subset L^+[\bar r,s,a,a+(k-1)\tau_j],\quad 
E(\tau_j,k_0+k)\subset L^+[\bar r,s,a,a+k\tau_j].
$$
By Proposition \ref{prop:properties_of_stable_flows} (b) (recall that $k_0=\intpart{a/\tau_j}$)
$$
\affine\Big(\tfrac{ \sd_{L^+[\bar r,s,a,a+k\tau_j-\tau_j]}}{\tau_j}\Big) > - \kappa_{L[\bar r,s,a,a+k\tau_j]}^\norm - \forcing + \frac s2 \quad \text{on $\p L[\bar r,s,a,a+k\tau_j].$}
$$
Thus, applying Lemma \ref{lem:discrete_comparison} (a) with 
$E:=F(\tau_j,k_0+k-1),$ $E_\tau:=F(\tau_j,k_0+k),$ 
$F:=L^+[\bar r,s,a,a+(k-1)\tau_j]$ and  
$F_\tau:=L^+[\bar r,s,a,a+k\tau_j]$ we obtain 
$$
\p E(\tau_j,k_0+k) \cap \p L^+[\bar r,s,a,a+k\tau_j] = \emptyset,
$$
which contradicts 
\eqref{tunnel_berkit}. Thus \eqref{eq:rbar=0} is proven. Analogous contradiction argument based on Lemma \ref{lem:discrete_comparison} (b) and Proposition \ref{prop:properties_of_stable_flows} (b) for $s<0$ shows the validity of  \eqref{urushlar}. 
This concludes the proof of \eqref{eq:t_bar}.

Now, let us prove \eqref{hatto_hindiya}. By construction $L^-[0,2s,a,a]\Subset L^-[0,s,a,a]$ and $L^+[0,s,a,a]\Subset L^+[0,2s,a,a].$ Thus, by the 
comparison  principle in Remark \ref{rem:comparison}, 
$L^-[0,2s,a,t]\Subset L^-[0,s,a,t]$ and $L^+[0,s,a,t]\Subset L^+[0,2s,a,t]$ for all $t\in [a,T].$ Since $L^\pm[0,\cdot,\cdot,\cdot]$ continuously varies, there is $\indexsATW>1$ such that for any $j>\indexsATW$ 
\begin{multline}\label{misrliklar}
L^-[0,2s,a,a+\bar t]\subset  L^-[0,s,a,a+\bar k_j\tau_j] \\
\subset E(\tau_j,\bar k_j)
\subset 
L^+[0,s,a,a+\bar k_j\tau_j]
\subset 
L^+[0,2s,a,a+\bar t],
\end{multline}
where we recall that 
$\bar k_j:=\intpart{\bar t/\tau_j}$, and 
we used  $\bar k_j\tau_j = \intpart{\bar t/\tau_j}\tau_j\nearrow \bar t.$ 
Let the function $\atwfunction$ be 
given by Proposition \ref{prop:properties_of_stable_flows} (d) so that 
$$
\max_{x\in\p L^\pm[0,2s,a,a+\bar t]}\dist(x,\p L^\pm[0,0,a,a+\bar t]) \le 
\atwfunction
(2s).
$$
Since, by the definition of $L^\pm$,  
we have $L^\pm[0,0,a,a+\bar t] = E(a+\bar t)= L^\pm[0,0,a+\bar t,a+\bar t]$ and,
by the definition of $\atwfunction$, 
$$
\dist(\p L^\pm[0,\atwfunction(2s),a+\bar t,a+\bar t], \p L^\pm[0,0,a+\bar t,a+\bar t]) = \atwfunction(2s),
$$
it follows that 
$$
L^-[0,4\atwfunction(2s),a+\bar t,a+\bar t]\subset L^-[0,2s,a,a+\bar t]
\quad\text{and}\quad 
L^+[0,2s,a,a+\bar t] \subset L^+[0,4\atwfunction(2s),a+\bar t,a+\bar t].
$$
Using this and \eqref{misrliklar} we get 
$$
L^-[0,4\atwfunction(2s),a+\bar t,a+\bar t] \subset E(\tau_j,k_0+\bar k_j) \subset L^+[0,4\atwfunction(2s),a+\bar t,a+\bar t].
$$
\end{proof}

{\it Proof of Theorem \ref{teo:consistency}}. 
Let $\bar t$ be given by Lemma \ref{lem:vengriya_prezidenti},
$$
N:=\intpart{T/\bar t} + 1
$$
and let $\sigma_0\in(0,\sigma/16)$ be such that the numbers
$$
\sigma_l=4\atwfunction(2\sigma_{l-1}),\quad l=1,\ldots,N,
$$
satisfy $\sigma_l\in (0,\sigma/16).$ By the monotonicity and continuity of $h$ together with $\atwfunction(0)=0,$ and the finiteness of $N,$ 
such a choice of $\sigma_0$ is possible (indeed, it is enough to observe that if $\sigma_0\to0,$ then all $\sigma_l\to0$). 

Fix any $s\in(0,\sigma_0)$ and let 
$$
a_0(s):=s,\quad a_l(s):=4\atwfunction(2a_{l-1}(s)),\quad l=1,\ldots,N.
$$
Note that $a_l(s)\in (0,\sigma_l).$ 
In particular, the numbers 
$\indexATW(a_l(s)),$ 
given by the last assertion of Lemma \ref{lem:vengriya_prezidenti}, are well-defined. Let also 
$$
j_l^s:=\max\{j\ge1:\,\, \tau_j\notin (0,\tau_2(a_l(s)))\}
$$
and
$$
j_s:= 1 + \max_{l=0,\ldots,N}\,\max \{
\indexATW
(a_l(s)),j_l^s\}.
$$

By the definition of $L^\pm,$ 
$$
L^-[0,s,0,0] \subset S(0)=\initialset=E(\tau_j,0) \subset L^+[0,s,0,0]
$$
for all $j>j_s$ (basically, this is true for all $j$). Therefore, by Lemma \ref{lem:vengriya_prezidenti} applied with $a=0$ (so that $k_0:=\intpart{a/\tau_j}=0$) we find 
$$
L^-[0,s,0,k\tau_j]\subset E(\tau_j,k) \subset L^+[0,s,0,k\tau_j],\quad k=0,1,\ldots,\bar k_j,
$$
where $\bar k_j:=\intpart{\bar t/\tau_j}.$ Moreover, since $s\in (0,\sigma_0,)$ by the last assertion of Lemma \ref{lem:vengriya_prezidenti} and the definition of $a_l(s)$ 
$$
L^-[0,a_1(s),\bar t,\bar t]\subset E(\tau_j,\bar k_j) \subset L^+[0,a_1(s),\bar t, \bar t] 
$$
for all $j\ge 
j_s.$ Hence, we can reapply Lemma \ref{lem:vengriya_prezidenti} with $s:=a_1(s),$ $a=\bar t$ and $k_0=\bar k_j,$ to find 
$$
L^-[0,a_1(s),\bar t,\bar t + k\tau_j]\subset E(\tau_j,\bar k_j+k) \subset L^+[0,a_1(s),\bar t,\bar t+ k\tau_j],\quad k=0,1,\ldots,\bar k_j.
$$
In particular, since $j>
j_s> 
\indexATW
(a_1(s)),$ again by the last assertion of  Lemma \ref{lem:vengriya_prezidenti}  we deduce 
$$
L^-[0,a_2(s),2\bar t,2\bar t]\subset E(\tau_j,2\bar k_j) \subset L^+[0,a_2(s),2\bar t,2\bar t]. 
$$
Repeating this argument at most $N$ times, for all $j> 
j_s$ we find 
\begin{equation}\label{nadomatlar671}
L^-[0,a_l(s),l\bar t,l\bar t + k\tau_j]\subset E(\tau_j,l\bar k_j+k) \subset L^+[0,a_l(s),l\bar t,l\bar t + k\tau_j],\quad k=0,1,\ldots,\bar k_j
\end{equation}
whenever $l=0,\ldots,N$ with $l\bar t + k\tau_j < T.$ 

Now take any $t\in(0,T),$ and let $l:=\intpart{t/\bar t}$  and $k=\intpart{t/\tau_j} - l\bar k_j$ so that $l\bar k_j + k = \intpart{t/\tau_j}.$ By means of $l$ and $k,$ as well as the definition of $\bar k_j$ we represent \eqref{nadomatlar671} as 
\begin{multline}\label{headlight09}
L^-\Big[0,a_l(s),l\bar t, l\bar t + \tau_j\intpart{\tfrac{t}{\tau_j}} - l\tau_j\intpart{\tfrac{\bar t}{\tau_j}}\Big]\\
\subset 
E\Big(\tau_j, \intpart{\tfrac{t}{\tau_j}}\Big) 
\subset 
L^+\Big[0,a_l(s),l\bar t, l\bar t + \tau_j\intpart{\tfrac{t}{\tau_j}} - l\tau_j\intpart{\tfrac{\bar t}{\tau_j}}\Big]
\end{multline}
for all $j>j_s.$ Since 
$$
\lim\limits_{j\to+\infty} \Big(l\bar t + \tau_j\intpart{\tfrac{t}{\tau_j}} - l\tau_j\intpart{\tfrac{\bar t}{\tau_j}}\Big) = t,
$$
by the continuous dependence of $L^\pm$ on its parameters, as well as the convergence \eqref{flats_converge} of the flat flows, letting $j\to+\infty$ in \eqref{headlight09} we obtain
\begin{equation}\label{endiyonimga}
L^-[0,a_l(s),l\bar t, t]\subset E(t) \subset L^+[0,a_l(s),l\bar t, t],
\end{equation}
where, due to the $L^1$-convergence, the inclusions in \eqref{endiyonimga}
hold possibly up to some negligible set. Now we let $s\to0^+$ and recalling that $a_l(s)\to0$ (by the continuity of $h$ 
and assumption $h(0)=0$), from \eqref{endiyonimga} we deduce 
\begin{equation*}
L^-[0,0,l\bar t,t]\subset E(t) \subset L^+[0,0,l\bar t,t].
\end{equation*}
Now recalling $L^\pm[0,0,a,t]=S(t)$ for $t\in [a,T]$ we get 
$$
E(t)=L^\pm[0,0,l\bar t,t] = S(t).
$$
\qed
\section{Evolution of convex sets: proof of Theorem 
\ref{teo:gmm_convex_intro}
}\label{sec:evolution_of_bounded_convex_sets}
In this section we prove 
Theorem \ref{teo:gmm_convex_intro}; thus we 
assume $\norm$ is Euclidean, $\forcing\equiv0$ and $\affine(r)=
\sign(r)|r|^\DeGiorgiexponent
$ for some $\DeGiorgiexponent>0$.
We need the following result,
 proven in \cite{Schulze:2005}.

\begin{theorem}
\label{teo:convex_smooth_evol}
Let $\initialset\subset\Rn$ be a bounded $C^{2+\Holderexponent}$-convex set for 
some $\Holderexponent\in(0,1]$. Then there exist $T^*>0$ and a 
unique $C^1$-in time 
flow $\{E(t)\}_{t\in [0,T^*)}$ starting from $\initialset$ such that 
each $E(t)$ is $C^2,$ convex and 
$$
\normalvelocityofEt = - \meancurvature^{1/\DeGiorgiexponent} \quad\text{on $\p E(t)$}
$$
for all $t\in [0,T^*).$ Moreover, $\{E(t)\}$ is stable in the sense of Definition \ref{def:stable_smooth_flow} and $|E(t)|\to0$ as $t\nearrow T^*.$
\end{theorem}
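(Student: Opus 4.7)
The plan is to reformulate the flow as a fully nonlinear parabolic PDE and invoke standard parabolic H\"older theory, essentially along the lines of Schulze \cite{Schulze:2005,Schulze:2006}. First I would parametrize the evolving boundary as a normal graph over $\p \initialset,$ writing
$$
\p E(t) = \{x + u(x,t)\nu_0(x) : x\in \p \initialset\},
$$
where $\nu_0$ is the outer unit normal to $\p \initialset.$ The flow $\normalvelocity = -\meancurvature^{1/\DeGiorgiexponent}$ then becomes a scalar equation $\p_t u = -F(D^2 u, Du, u, x),$ where $F$ is smooth on the open set where the induced $\meancurvature$ is positive and the principal symbol of its linearization equals $\tfrac{1}{\DeGiorgiexponent}\meancurvature^{1/\DeGiorgiexponent - 1} g^{ij}.$ For uniformly convex $\initialset$ this is uniformly parabolic; for merely convex $\initialset,$ I would first approximate by smooth uniformly convex sets, solve, and pass to the limit using the classical comparison principle (Remark \ref{rem:comparison}). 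Short-time existence and uniqueness of a classical solution $u \in C^{2+\Holderexponent,1+\Holderexponent/2}$ then follow from a contraction-mapping argument, or equivalently from the implicit function theorem applied around $u\equiv 0.$

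Second, I would verify preservation of convexity. Computing the evolution of the Weingarten operator under $\normalvelocity = -\meancurvature^{1/\DeGiorgiexponent}$ and applying Hamilton's tensor maximum principle yields nonnegativity of its eigenvalues along the flow; this is the technical core of Schulze's argument, specific to $\affine(r)=r^\DeGiorgiexponent.$ A complementary pointwise argument shows that strict convexity is instantly gained and preserved, which keeps the equation uniformly parabolic as long as $\max\meancurvature$ remains bounded. Finite-time extinction with $|E(t)|\to 0$ as $t\nearrow T^*$ follows by comparison with any circumscribed ball $B_{R_0}(x_0)\supset\initialset$: integrating $R'(t)=-((n-1)/R(t))^{1/\DeGiorgiexponent}$ shows that the circumscribing ball shrinks to a point in finite time, and the classical comparison principle forces $E(t)$ to disappear by that time.

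Finally, to obtain stability in the sense of Definition \ref{def:stable_smooth_flow}, I would solve the perturbed Cauchy problem $\affine(\normalvelocity)=-\meancurvature \pm s$ with initial datum $\{\sd_{S(a)}<\pm(r+s)\}$ for each admissible $(r,s,a),$ using the same parabolic theory; smooth dependence of $L^\pm[r,s,a,t]$ on $(r,s,a)$ follows from differentiating the PDE in those parameters and applying Schauder estimates to the linearizations. The main obstacle throughout is the degeneracy of $\meancurvature^{1/\DeGiorgiexponent}$ at $\meancurvature=0,$ which makes the PDE only degenerate parabolic where the boundary is flat (when $\DeGiorgiexponent\ne 1$); Schulze's monotonicity argument for the evolving Weingarten operator, guaranteeing instantaneous strict convexification, is the key ingredient that restores classical parabolicity and permits all the above arguments to go through.
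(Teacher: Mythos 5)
The first thing to say is that the paper does not prove Theorem \ref{teo:convex_smooth_evol}: it is stated as a result ``proven in \cite{Schulze:2005}'' (see also \cite{Schulze:2006}), with the stability property in the sense of Definition \ref{def:stable_smooth_flow} appended in the spirit of \cite[Corollary 7.2]{ATW:1993}. So your outline is not competing with an argument in the text but with the cited literature, and at the level of strategy you have reconstructed exactly that route: graph parametrization over $\p\initialset$, short-time existence by linearization, preservation of convexity via Hamilton's tensor maximum principle, extinction by comparison with shrinking balls solving $R'=-((n-1)/R)^{1/\DeGiorgiexponent}$, and stability by solving the perturbed problems and differentiating in the parameters.

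As a proof, however, your sketch has a genuine gap precisely where the cited papers do their hard work. The step ``for merely convex $\initialset$, approximate by smooth uniformly convex sets, solve, and pass to the limit using the classical comparison principle'' does not deliver the conclusion: comparison gives monotone inclusions between the approximating flows, but by itself it provides no compactness in $C^{2}$, no guarantee that the limit is a $C^{1}$-in-time flow of $C^{2}$ convex sets attaining the initial datum, and no uniqueness in the class stated in the theorem. When $\DeGiorgiexponent\ne1$ the speed $\meancurvature^{1/\DeGiorgiexponent}$ is degenerate (for $\DeGiorgiexponent<1$) or singular (for $\DeGiorgiexponent>1$) at $\meancurvature=0$, and a $C^{2+\Holderexponent}$-convex set may well have boundary points with $\meancurvature=0$; what restores uniform parabolicity and allows the limit passage is a quantitative a priori package (instantaneous positive lower bound on the mean curvature uniform in the approximation, curvature bounds, Krylov--Safonov and Schauder estimates), i.e.\ the convexity and pinching estimates that constitute the core of \cite{Schulze:2005,Schulze:2006}; these cannot be replaced by the avoidance principle. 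Two further points are asserted rather than argued: uniqueness within the low-regularity class of the statement (the contraction-mapping construction only gives uniqueness among H\"older-class solutions; one must then compare an arbitrary classical flow with the constructed one, e.g.\ by the avoidance principle), and the stability property, which requires an existence time and smooth dependence on $(r,s,a)$ that are uniform in those parameters, not just solvability of each perturbed problem separately. In short: correct roadmap, same approach as the source the paper cites, but the analytic core that the paper delegates to Schulze is left unproven in your text as well.
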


\medskip
\noindent
{\it Proof of Theorem \ref{teo:gmm_convex_intro}\rm(i)}.
If $\Int{\initialconvexset}=\emptyset,$ then by convexity, 
$|\initialconvexset|=0,$ and we are done. 
Otherwise, since $\forcing \equiv 0$, translating 
if necessary, we assume that $\Int{\initialconvexset}$ contains the origin. 
Suppose that $\GMM(\shorthandgeneralfunctional,\cS^*,
\initialconvexset)$ contains at least two different $\GMM$'s, say, 
$\convex'(\cdot)$ and $\convex''(\cdot)$ so that there exist
$T>0$ and $\epsilon_0>0$ such that 
\begin{equation}\label{pistirma0}
|\convex'(T)\Delta \convex''(T)|\ge 2\epsilon_0.
\end{equation}
Also, by the uniform time-continuity  (see Theorem \ref{teo:existence_gmm}(i))
 of $\convex'(\cdot),\convex''(\cdot),$ 
possibly decreasing $T$ if necessary, we may further 
assume that $|\convex'(T)|,
|\convex''(T)|\ge 2\epsilon_0.$ 
Fix any $\scalefactor
\in(0,1)$ sufficiently close to $1$ so that 
\begin{equation}\label{bayramxon015}
|\inverseofscale \convex'(\scalefactor^{1/\DeGiorgiexponent} T)
\setminus 
\scalefactor \convex'(\scalefactor^{-1/\DeGiorgiexponent} T)| <\epsilon_0,\quad 
|\inverseofscale
 \convex''(\scalefactor^{1/\DeGiorgiexponent} T)
\setminus 
\scalefactor
 \convex''(\scalefactor^{-1/\DeGiorgiexponent} T)|
<\epsilon_0.
\end{equation}
Let us define $\initialset^\scalefactor:=\scalefactor \initialconvexset$ and 
$F_0^\scalefactor:= \inverseofscale
\initialconvexset$, so that 
 $\initialset^\scalefactor \Subset \initialconvexset
\Subset F_0^\scalefactor.$ 
Now we choose smooth convex sets $P_0$ and $Q_0$ such that 
$$
\initialset^\scalefactor
 \Subset P_0\Subset \initialconvexset
\Subset Q_0\Subset F_0^\scalefactor.
$$
If we define the corresponding flat flows, then 
by Corollary \ref{cor:strong_comparison}
\begin{equation}\label{qabrtoshi}
E^\scalefactor(\tau,k) \Subset P(\tau,k)\Subset C(\tau,k)\Subset Q(\tau,k)\Subset F^\scalefactor(\tau,k)
\end{equation}
for any $\tau>0$ and $k\ge0$ with $|E^\scalefactor(\tau,k)|>0.$ 

Since $P_0$ and $Q_0$ are smooth and convex, by Theorem \ref{teo:convex_smooth_evol} the corresponding smooth flows exist and 
disappear at a maximal time (however, GMM starting from them exists for all times). In particular, by consistency (Theorem \ref{teo:consistency}) 
both  $\GMM(\shorthandgeneralfunctional,\cS^*,P_0)$ and 
$\GMM(\shorthandgeneralfunctional,\cS^*,Q_0)$ are singletons, say, 
$\{P(\cdot)\}$ and $\{Q(\cdot)\}.$

Let $\tau_j'\searrow0$ and $\tau_j''\searrow0$ be sequences for which 
$$
|\convex(\tau_j',\intpart{t/\tau_j'})\Delta \convex'(t)| \to0 
\quad\text{and}\quad 
|\convex(\tau_j'',\intpart{t/\tau_j''})\Delta \convex''(t)| \to0 
$$
as $j\to+\infty$ for all $t\ge0.$ In view of 
Corollary \ref{cor:power_law_case}, as $j \to +\infty$ we have
$$
|E^\scalefactor(\tau_j',\intpart{t/\tau_j'})\Delta [\scalefactor 
\convex'(\scalefactor^{-1/\DeGiorgiexponent} t)]| \to0
\quad\text{and}\quad
|F^\scalefactor(\tau_j',\intpart{t/\tau_j'})\Delta [\inverseofscale
\convex''(\scalefactor^{-1/\DeGiorgiexponent} t)]| \to0
$$
and 
$$
|E^\scalefactor(\tau_j'',\intpart{t/\tau_j''})\Delta 
(\scalefactor \convex'(\scalefactor^{-1/\DeGiorgiexponent} t))| \to0
\quad\text{and}\quad
|F^\scalefactor(\tau_j'',\intpart{t/\tau_j''})\Delta [\inverseofscale 
\convex''(\scalefactor^{1/\DeGiorgiexponent} t)]| \to0.
$$
Now applying \eqref{qabrtoshi} with $\tau_j'$ we deduce 
\begin{equation}\label{bobur_askarlari1}
\scalefactor \convex'(\scalefactor^{-1/\DeGiorgiexponent} t) \subset P(t) \subset \convex'(t)
\subset Q(t)\subset \inverseofscale
\convex'(\scalefactor^{1/\DeGiorgiexponent} t)\quad \text{for all $t\in [0,T],$} 
\end{equation}
and appyling it with $\tau_j''$ we deduce
\begin{equation}\label{bobur_askarlari2}
\scalefactor \convex''(\scalefactor^{-1/\DeGiorgiexponent} t) \subset P(t) \subset 
\convex''(t)\subset Q(t)\subset \inverseofscale \convex''(\scalefactor^{1/\DeGiorgiexponent} t)
\quad \text{for all $t\in [0,T].$} 
\end{equation}
By \eqref{bayramxon015}
$$
|Q(T)\setminus P(T)| \le |[
\inverseofscale C'(\scalefactor^{1/\DeGiorgiexponent} T)]\setminus [\scalefactor C'(\scalefactor^{-1/\DeGiorgiexponent} T)]| \le \epsilon_0. 
$$
However, in view of \eqref{bobur_askarlari1} and \eqref{bobur_askarlari2}  
as well as of \eqref{pistirma0},
$$
2\epsilon_0\le |\convex''(T)\Delta \convex'(T)| \le 
|Q(T)\setminus P(T)| \le \epsilon_0,
$$
a contradiction.
\qed

\medskip
\noindent
{\it Proof of Theorem \ref{teo:gmm_convex_intro}\rm(ii)}.
By the Kuratowski convergence of $(\p \initialconvexsetindex)$ 
and comparison principles, given $\scalefactor\in(0,1),$ as in the 
proof of (i),
$$
\scalefactor \convex(t\scalefactor^{-1/\DeGiorgiexponent}) \subset 
\convex_\naturalindex(t)\subset 
\inverseofscale
 \convex(t\scalefactor^{1/\DeGiorgiexponent})\quad\text{for 
all $t\ge0$},
$$
provided $\naturalindex \in \mathbb N$ is large enough depending only on 
$\scalefactor.$ Since 
the sequence 
$(P(\convex_\naturalindex(t)))$ is bounded (by the supremum of 
$P(\initialconvexsetindex)$), up to a subsequence, 
$\convex_{\naturalindex}(t) \to \convex'(t)$ 
in $L^1(\Rn)$ as $i\to+\infty.$ Then
$$
\scalefactor \convex(t\scalefactor^{-1/\DeGiorgiexponent}) \subset \convex'(t)\subset 
\inverseofscale 
\convex(t\scalefactor^{1/\DeGiorgiexponent})\quad\text{for all $t\ge0.$} 
$$
Now letting $\scalefactor\to1$ we get $\convex'(t)=\convex(t),$ i.e., 
the limit of $(\convex_\naturalindex)$ is
independent of the subsequence. Thus, the thesis follows.
\qed

\section{Minimizing movements in the class $\convexsets$:
proof of Theorem 
\ref{teo:GMM_in_the_class_conv}}
\label{sec:minimizing_movements_in_the_class_conv}
In this section we suppose $\forcing \equiv 0$. 
Motivated by Conjecture \ref{conj:3_2}, 
we study $\GMM$ in the class $\convexsets$. 
Notice that, due to the convexity constraint, the first variation of 
$\shorthandgeneralfunctional$ is 
nonlocal, and thus, in general we cannot write a pointwise Euler-Lagrange equation;
therefore, the nature of $\GMM(\shorthandgeneralfunctional,\convexsets,
\initialconvexset)$ 
seems not  clear. 
Moreover, to prove Theorem \ref{teo:GMM_in_the_class_conv}
we cannot apply the 
techniques used in the proof of Theorem \ref{teo:existence_gmm}, based on cutting 
and filling with balls (because they lead to the  lost of convexity). 

{\it 
Proof of Theorem \ref{teo:GMM_in_the_class_conv}.}
Let $\initialconvexset\in \convexsets.$ 
Without loss of generality we assume that the interior of $\initialconvexset$ 
is nonempty. By the $L_\loc^1$-closedness of $\convexsets$ and the 
$L_\loc^1$-lower semicontinuity of $\shorthandgeneralfunctional(\cdot;\convex,\tau)$ 
for any $\convex \in\convexsets,$ there exists a minimizer of $\shorthandgeneralfunctional(\cdot;\convex,\tau)$ in $\convexsets.$ 
By truncation, we can readily show that every minimizer $
\convex_\tau$ satisfies $\convex_\tau\subset \convex.$ 
Now we define flat flows $\{\convex(\tau,k)\};$ clearly, 
\begin{equation}\label{madhaliyalr}
\initialconvexset=\convex(\tau,0)\supset \convex(\tau,1)\supset \ldots\quad\text{and}\quad 
\anisotropicperimeter(\initialconvexset)=\anisotropicperimeter(\convex(\tau,0))\ge 
\anisotropicperimeter(\convex(\tau,1)) \ge\ldots.
\end{equation} 
For any $\tau>0,$ define
$$
h_\tau(t):=\anisotropicperimeter(\convex
(\tau,\intpart{t/\tau})),\quad t\ge0.
$$
By \eqref{madhaliyalr} $\{h_\tau\}$ are nonincreasing nonnegative functions satisfying  $h_\tau(0)=P(\initialconvexset)$. Thus, Helly's selection theorem 
implies the existence of $\tau_j\to0^+$ and a nonincreasing function $h_0:\R_0^+\to\R_0^+$ such that $h_{\tau_j}\to h_0$ in $\R_0^+.$ 
Since $h_0$ is monotone, its discontinuity set $J\subset\R_0^+$ is at most countable. Let $Q\subset \R_0^+\setminus J$ be any countable dense set in $\R_0^+$. By compactness in $BV$, passing to a further not relabelled sequence $\tau_j$ and  using a diagonal argument, for any $s\in Q\cup J,$ we can define a $\convex
(s)\subset\Rn$ such that 
$$
\convex
(\tau_j,\intpart{s/\tau_j}) \to \convex
(s)\quad\text{in $L^1(\Rn)$ as $j\to+\infty.$}
$$
Since each $\convex(\tau_j,\intpart{s/\tau_j})$ is convex, so is $\convex(s)$ and hence, 
$$
\lim\limits_{j\to+\infty}\,h_{\tau_j}(s) = \lim\limits_{j\to+\infty}\, 
\anisotropicperimeter(\convex
(\tau_j,\intpart{s/\tau_j})) = 
\anisotropicperimeter(\convex
(s)) = h_0(s).
$$
Now take any $0<s\in \R_0^+\setminus [Q\cup J]$ so that $h_0$ is continuous at $s.$ By density of $Q$ in $\R_0^+,$ we can choose sequences $Q\ni a_k\nearrow s$ and $Q\ni b_k\searrow s.$ By \eqref{madhaliyalr} the maps $k\mapsto \convex
(a_k)$ and $k\mapsto 
\anisotropicperimeter(\convex
(a_k))$ are nonincreasing and the maps $k\mapsto \convex
(b_k)$ and $k\mapsto \anisotropicperimeter(\convex
(b_k))$ are nondecreasing. 
Let 
$$
\bigcap_{k\ge1} \convex
(a_k)=:\convex
(s)^*\supset \convex
(s)_*= \bigcup_{k\ge1} \convex
(b_k).
$$
By the continuity of $g$ at $s,$ both  $P_\norm(\convex
(a_k))$ and $P_\norm(\convex
(b_k))$ converges to $g(s),$ and therefore, $P_\norm(\convex
(s)^*) = P_\norm(\convex
(s)_*).$  Thus, by the convexity of both $\convex
(s)_*$ and $\convex
(s)^*$, 
it follows  
$$
\convex
(s)_*=\convex
(s)^*=:\convex(s)
$$
 and  $|\convex(a_k)\Delta \convex(s)|\to0.$ Let us show that 
\begin{equation}\label{irans_division}
\convex
(\tau_j,\intpart{s/\tau_j}) \to \convex(s)\quad\text{in $L^1(\Rn)$ as $j\to+\infty.$}
\end{equation}
Notice that
\begin{equation}\label{shsudhuhs}
\limsup\limits_{j\to+\infty} \,\limsup_{k\to+\infty}\,(h_{\tau_j}(a_k) - h_{\tau_j}(b_k)) = 0,
\end{equation}
otherwise, $h_0$ cannot be continuous at $s.$ Therefore, in view of the inclusion
$$
\convex
(\tau_j,\intpart{a_k/\tau_j}) \supset \convex
(\tau_j,\intpart{s/\tau_j}) \supset \convex
(\tau_j,\intpart{b_k/\tau_j})
$$
we have 
$$
\bigcap_{k\ge1} \convex
(\tau_j,\intpart{a_k/\tau_j}) \supset \convex
(\tau_j,\intpart{s/\tau_j}) \supset \bigcup_{k\ge1} \convex
(\tau_j,\intpart{b_k/\tau_j})
$$
and by \eqref{shsudhuhs} and convexity of these sets, for any $\epsilon,$
$$
|\convex
(\tau_j,\intpart{a_k/\tau_j}) \setminus  \convex
(\tau_j,\intpart{s/\tau_j})|<\epsilon
$$
provided that $k>k_\epsilon>0.$ Hence, letting $j\to+\infty$ 
in the estimate 
\begin{multline*}
|\convex
(\tau_j,\intpart{s/\tau_j}) \Delta \convex(s)| \\
\le |\convex
(\tau_j,\intpart{s/\tau_j}) \Delta \convex
(\tau_j,\intpart{a_k/\tau_j})|
+|\convex
(\tau_j,\intpart{a_k/\tau_j})\Delta \convex(a_k)| + |\convex(a_k)\Delta 
\convex(s)|,
\end{multline*}
and then $k\to+\infty$ and $\epsilon\to 0^+$ in 
$$
\limsup_{j\to+\infty}\,|\convex
(\tau_j,\intpart{s/\tau_j}) \Delta \convex(s)| \\
\le \epsilon 
+ |\convex(a_k)\Delta \convex(s)|,
$$
we get \eqref{irans_division}. Thus, by definition the family $\{\convex(s)\}_{s\ge0}$ is a $\GMM$.
\qed 

Suppose (compare with the next proposition)
that for any bounded convex set
$\initialconvexset,$
$\shorthandgeneralfunctional(\cdot;\initialconvexset,\tau)$ 
admits a convex minimizer in $\cS^*.$ 
In this case, the flat flows $\convex
(\tau,k)$ consisting of those convex sets are also flat flows in $\convexsets;$ by Theorem \ref{teo:gmm_convex_intro} $\convex
(\tau,\intpart{t/\tau})\to \convex
(t)$ 
as $\tau\to 0^+$ (because $\GMM(
\shorthandgeneralfunctional
,\cS^*,\initialconvexset)$ 
is a singleton). Then clearly, 
$\convex
(\cdot)\in \GMM(\shorthandgeneralfunctional,\convexsets,\initialconvexset)$
 (in fact is a minimizing movement). 
However, it is not clear whether $\GMM(\shorthandgeneralfunctional,
\convexsets,\initialconvexset)$ contains other generalized minimizing movements.

\begin{proposition}\label{prop:fortunate}
Suppose that $\norm$ is an elliptic $C^{3}$-anisotropy
and that $\forcing \equiv 0$.
Assume that $\affine$ is concave in $(0,+\infty).$  
Then for any bounded convex open set $\initialconvexset$ and $\tau>0$ the 
minimal and maximal minimizers of $\shorthandgeneralfunctional(\cdot;\initialconvexset,\tau)$ in $\cS^*$ are  convex.
\end{proposition}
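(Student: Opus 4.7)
The plan is to recast the minimization of $\shorthandgeneralfunctional(\cdot;\initialconvexset,\tau)$ as a prescribed anisotropic mean curvature problem whose forcing is convex on the admissible region, and then to invoke the convexity-preservation principle for such problems via a scalar Chambolle-type reformulation.

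By formula \eqref{eq:prescur_funco} with $\forcing\equiv 0$, minimizing $\shorthandgeneralfunctional(\cdot;\initialconvexset,\tau)$ in $\cS^*$ is equivalent, up to an additive constant independent of $E$, to minimizing
$$
\cR(E):=\anisotropicperimeter(E)+\int_E h_\tau(x)\,dx,
\qquad
h_\tau(x):=\affine\!\left(\tfrac{\sd_{\initialconvexset}(x)}{\tau}\right).
$$
By Lemma \ref{lem:inclusion_of_minimizers}(i), the minimal and maximal minimizers $E_*$ and $E^*$ satisfy $E_*,E^*\subset\cl{\initialconvexset}$, so only the values of $h_\tau$ on $\cl{\initialconvexset}$ are relevant. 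The first key observation is that \emph{$h_\tau$ is convex on $\cl{\initialconvexset}$}: convexity of $\initialconvexset$ yields that $\sd_{\initialconvexset}$ is a convex $1$-Lipschitz function on $\Rn$, with $\sd_{\initialconvexset}\le 0$ on $\cl{\initialconvexset}$; oddness of $\affine$ converts the assumed concavity on $(0,+\infty)$ into convexity on $(-\infty,0]$; composing the increasing convex function $\affine$ with the convex function $\sd_{\initialconvexset}/\tau$ delivers the claim.

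Next I would pass to the scalar Chambolle-type reformulation. Let $\Psi$ be a convex primitive of $\affine$ (convex because $\affine$ is increasing, strictly convex because $\affine$ is strictly increasing), normalized so that $\Psi(0)=0$, and consider
$$
\mathcal J(u):=\int_\Rn \dualnorm(Du)+\tau\int_\Rn \Psi\!\left(\tfrac{u(x)-\sd_{\initialconvexset}(x)}{\tau}\right)dx
$$
on an appropriate affine subspace of $BV_\loc(\Rn)$. Strict convexity and coercivity of $\mathcal J$ give existence and uniqueness of a minimizer $u^*$. The Euler-Lagrange equation for $u^*$, combined with the coarea formula, yields the level-set identification: for each $s\in\R$, the sets $\{u^*<s\}$ and $\{u^*\le s\}$ are, respectively, the minimal and maximal minimizers (in $\cS^*$) of the functional $E\mapsto\anisotropicperimeter(E)+\int_E \affine((\sd_{\initialconvexset}-s)/\tau)\,dx$. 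Specializing to $s=0$, one obtains $E_*=\{u^*<0\}$ and $E^*=\{u^*\le 0\}$.

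To conclude, one invokes the convexity-preservation principle for the Chambolle-type scalar problem: when the data $\sd_{\initialconvexset}$ is convex on $\Rn$ and the integrand $\Psi$ is convex, the unique minimizer $u^*$ is quasi-convex, i.e., all its sub-level sets are convex. This can be proved by a direct comparison of $u^*$ with the quasi-convex envelope built from its sub-level sets, exploiting convexity of $\sd_{\initialconvexset}$ and the monotonicity/convexity of $\Psi$. Being convex sub-level sets of $u^*$, the sets $E_*$ and $E^*$ are then convex, as claimed. The main obstacle is the rigorous derivation of both the level-set identification and the quasi-convexity preservation for general convex primitives $\Psi$, going beyond Chambolle's classical argument tailored to $\Psi(r)=r^2/2$; this step relies in an essential way on the concavity of $\affine$ on $(0,+\infty)$ -- through the resulting convexity of $h_\tau$ on $\cl{\initialconvexset}$ -- to ensure that the layer-cake decomposition correctly identifies the extremal minimizers of $\cR$ with sub-level sets of $u^*$.
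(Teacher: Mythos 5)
Your reduction is sound as far as it goes: the observation that minimizers live in $\cl{\initialconvexset}$ and that $h_\tau=\affine(\sd_{\initialconvexset}/\tau)$ is convex there (equivalently, that $u_0:=\affine(\d_{\initialconvexset}/\tau)$ is concave on $\initialconvexset$) is exactly the point where the concavity of $\affine$ on $(0,+\infty)$ enters, and it matches the paper. The level-set decoupling you invoke for a convex fidelity is also a known fact in principle. But the decisive step --- that the scalar minimizer $u^*$ of $\int\dualnorm(Du)+\tau\int\Psi((u-\sd_{\initialconvexset})/\tau)\,dx$ is quasi-convex whenever $\sd_{\initialconvexset}$ is convex and $\Psi$ is convex --- is precisely the content of the proposition, and you assert it rather than prove it. The mechanism you sketch (comparing $u^*$ with ``the quasi-convex envelope built from its sub-level sets'') does not obviously work: convexifying level sets need not decrease the total variation term (already $P(\hull{E})\le P(E)$ fails for disconnected sets), and you give no argument controlling the fidelity term under such a rearrangement. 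Nor can you appeal directly to Alvarez--Lasry--Lions for your Euler--Lagrange equation $-\div\nabla\dualnorm(Du)+\affine((u-\sd_{\initialconvexset})/\tau)=0$: the zeroth-order term $(x,u)\mapsto\affine((u-\sd_{\initialconvexset}(x))/\tau)$ is neither jointly convex nor jointly concave (since $\affine$ changes from convex to concave across $0$), so the structural hypotheses of \cite[Theorem 1]{ALL:1997} are not met. In short, the gap is genuine and sits at the crux of the proof, as you yourself flag.

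The paper's proof avoids this obstruction by a different placement of the nonlinearity: instead of taking $\Psi$ a primitive of $\affine$ with datum $\sd_{\initialconvexset}$, it keeps the fidelity \emph{quadratic} and pushes $\affine$ into the datum, minimizing $\sE(v)=\int_{\initialconvexset}\norm^o(Dv)+\tfrac12\int_{\initialconvexset}(v+u_0)^2\,dx$ with the concave datum $u_0=\affine(\d_{\initialconvexset}/\tau)$. The level-set identification is then the one of \cite[Lemma 5.1]{CCh:2006} (for a.e.\ $s$ the sets $\{v_0<s\}$ minimize the corresponding prescribed-curvature functionals, and $\shorthandgeneralfunctional(\cdot;\initialconvexset,\tau)$ is recovered at $s=0$, the extremal minimizers being $\bigcup_{s<0}\{v_0<s\}$ and $\bigcap_{s>0}\{v_0\le s\}$), and convexity of $v_0$ follows because the Euler--Lagrange equation $-\div\nabla\norm^o(Dv_0)+v_0+u_0=0$ has a \emph{linear} zeroth-order term plus the concave $u_0$, which is exactly the structure to which \cite[Theorem 1]{ALL:1997} applies (with a smoothing/approximation step when $v_0$ is not regular). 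If you want to salvage your route, you would still need, for each level $s\le 0$, the convexity of minimizers of $\anisotropicperimeter(E)+\int_E\affine((\sd_{\initialconvexset}-s)/\tau)\,dx$ with convex forcing on the relevant region --- which is proved in the literature by the very quadratic-auxiliary-problem-plus-ALL device the paper uses, so the detour through a nonlinear fidelity buys nothing and leaves the hard step unproven.
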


\begin{proof}
We follow the notation of \cite{CCh:2006}. Since $\initialconvexset$ is convex, $\d_{\initialconvexset}$ is concave in $\initialconvexset,$ and hence, so is $u_0:=\affine(\d_{\initialconvexset}/\tau).$ 
Consider 
$$
\sE(v):=
\begin{cases}
\displaystyle \int_{\initialconvexset}\norm^o(Dv) + \int_{\initialconvexset}\frac{(v + u_0)^2}{2} ~dx & \text{if $v\in L^2(\initialconvexset)\cap BV(\initialconvexset),$}\\
+\infty & \text{if $L^2(\initialconvexset)\setminus BV(\initialconvexset),$}
\end{cases}
$$
where $\phi^o(Dv)$ is the $\phi^o$-total variation of $v.$
One checks that $\sE$ admits a unique minimizer $v_0\in L^2(\initialconvexset)\cap BV(\initialconvexset).$ Since $\sE$ is convex, by \cite{Anzellotti:1984} 
there exists $z\in L^\infty(\initialconvexset, \Rn)$ with $\div z\in L^2(\initialconvexset)$ such that
\footnote{Concerning
the notation $z \cdot Dv_0$ and $\dualnorm(Dv_0)$, see \cite{CCh:2006}.} 
$$
-\div z + v_0 + u_0=0, \quad \norm(z)\le 1,\quad 
\int_{\initialconvexset} z\cdot Dv_0 = \int_{\initialconvexset}\norm^o(Dv_0).
$$
Repeating the same arguments of \cite[Lemma 5.1]{CCh:2006} we can show that for a.e. $s<0$ the set $\{v_0<s\}$ is a minimizer of 
$$
\sE_s(F):=P_\norm(F) - \int_F(v_0+s)~dx, \qquad F \subseteq \initialconvexset,\quad s\in\R.
$$
If $v_0$ is twice continuously differentiable in $\initialconvexset$ with $|\nabla v_0|>0.$ Then 
$v_0$  is a viscosity solution of the equation 
$$
-\div\,\nabla\norm^o(Dv_0) + v_0 + u_0 = 0.
$$
Since $u_0$ is concave, using \cite[Theorem 1]{ALL:1997}  we find that
 $v_0$ is convex. In case $v_0$ is not sufficiently smooth, we can approximate the equation as in \cite{CCh:2006} and again 
get that $v_0$ is convex. In particular, $\sE_s$ admits a convex minimizer $\{v_0<s\}.$
One can readily check that the sets 
$$
\convex_*:=\bigcup_{s<0}\{v_0<s\}\quad\text{and}\quad 
\convex^*:=\bigcap_{s>0}\{v_0\le s\}
$$
are the minimal and maximal (convex) minimizers of the functional 
$\sE_0(\cdot)=\shorthandgeneralfunctional(\cdot;\initialconvexset,\tau).$
\end{proof}

As a corollary, we obtain the validity of Conjecture \ref{conj:3_1} 
under a restriction on $\DeGiorgiexponent>0$.

\begin{corollary}[\textbf{Conjecture \ref{conj:3_1} for $\DeGiorgiexponent
\in (0,1]$}]
\label{cor:de_giorgi_unique}
Let $\norm$ be Euclidean, $f(r)=r^\DeGiorgiexponent$ for $r>0$,
$$
\DeGiorgiexponent\in(0,1],
$$
and $\forcing \equiv 0$.
Then for any bounded convex $\initialconvexset\subset\Rn,$ $\GMM(\shorthandgeneralfunctional,\convexsets,\initialconvexset)$ is a singleton 
and coincides with the unique minimizing movement in $\GMM(\shorthandgeneralfunctional,\cS^*,\initialconvexset).$ 
\end{corollary}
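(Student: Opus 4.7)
The plan is to combine Proposition~\ref{prop:fortunate} on the convexity of extremal minimizers with the uniqueness assertion in $\cS^*$ from Theorem~\ref{teo:gmm_convex_intro}(i). First I would verify the hypotheses of Proposition~\ref{prop:fortunate}: for $\DeGiorgiexponent\in(0,1]$, the map $r\mapsto r^\DeGiorgiexponent$ is concave on $(0,+\infty)$, and the Euclidean norm is a smooth elliptic anisotropy. Hence, for every $\initialconvexset\in\convexsets$ and $\tau>0$, the minimal and maximal minimizers $\convex_*\subset\convex^*$ of $\shorthandgeneralfunctional(\cdot;\initialconvexset,\tau)$ in $\cS^*$ are both convex, in particular they belong to $\convexsets$.

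Second, I would show that minimizers in $\convexsets$ are precisely convex minimizers in $\cS^*$. The nontrivial direction is the following sandwich argument: if $\convex'\in\convexsets$ minimizes $\shorthandgeneralfunctional(\cdot;\initialconvexset,\tau)$ in the class $\convexsets$, then since $\convex_*\in\convexsets$ is a competitor,
$$
\shorthandgeneralfunctional(\convex';\initialconvexset,\tau)\le
\shorthandgeneralfunctional(\convex_*;\initialconvexset,\tau)
\le\shorthandgeneralfunctional(\convex';\initialconvexset,\tau),
$$
where the last inequality uses that $\convex_*$ is an $\cS^*$-minimizer. Equality throughout shows that $\convex'$ also minimizes $\shorthandgeneralfunctional(\cdot;\initialconvexset,\tau)$ in $\cS^*$; the converse direction is trivial. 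Consequently, every flat flow $\{\convex(\tau_j,k)\}_{k\ge0}$ in $\convexsets$ starting from $\initialconvexset$ is, at the same time, a flat flow in $\cS^*$ starting from $\initialconvexset$.

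Third, I would invoke Theorem~\ref{teo:gmm_convex_intro}(i): $\MM(\shorthandgeneralfunctional,\cS^*,\initialconvexset)=\GMM(\shorthandgeneralfunctional,\cS^*,\initialconvexset)=\{\convex(\cdot)\}$, where $\convex(\cdot)$ is independent of the particular selection of minimizers at each step. Therefore any flat flow in $\convexsets$, viewed as a flat flow in $\cS^*$, converges in $L^1_{\loc}(\Rn)$ to $\convex(\cdot)$ as $\tau\to 0^+$ (not merely along a subsequence). By definition this yields $\GMM(\shorthandgeneralfunctional,\convexsets,\initialconvexset)=\{\convex(\cdot)\}$, which is exactly the claim. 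Nonemptiness is automatic since $\convex_*\in\convexsets$ gives the existence of flat flows in the restricted class.

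The only delicate point is the first step, where concavity of $\affine$ on $(0,+\infty)$ is indispensable for applying Proposition~\ref{prop:fortunate}; this is precisely what pins down the restriction $\DeGiorgiexponent\le 1$ and explains why the argument does not extend to $\DeGiorgiexponent>1$. The rest is a bookkeeping matter: once convexity of the extremal minimizers is granted, the $\cS^*$-uniqueness of the MM transfers \emph{verbatim} to the convex class via the sandwich identification of minimizers.
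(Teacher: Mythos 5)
Your proof is correct and takes essentially the same route as the paper: concavity of $r\mapsto r^{\DeGiorgiexponent}$ for $\DeGiorgiexponent\in(0,1]$ lets you apply Proposition~\ref{prop:fortunate} to get convex minimal and maximal minimizers, and the uniqueness of the minimizing movement in $\cS^*$ from Theorem~\ref{teo:gmm_convex_intro}(i) then transfers to the class $\convexsets$. Your explicit value-sandwich argument, showing that every minimizer in $\convexsets$ is in fact an $\cS^*$-minimizer (so that every flat flow in $\convexsets$ is a flat flow in $\cS^*$, step by step), is just a spelled-out version of the point the paper leaves implicit when it observes that the convex extremal $\cS^*$-minimizers are also the extremal minimizers in $\convexsets$.
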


\begin{proof}
Let $\convex(\tau,k)_*$ and $\convex
(\tau,k)^*$ 
be flat flows in $\cS^*$ 
consisting of the minimal and maximal minimizers of $\shorthandgeneralfunctional$ (in $\cS^*$), starting from $\initialconvexset.$ By Theorem \ref{teo:gmm_convex_intro} we have 
$$
\lim\limits_{\tau\to0^+}\,\convex
(\tau,\intpart{t/\tau})_* = \convex
(t)
\quad\text{and}\quad 
\lim\limits_{\tau\to0^+}\,\convex
(\tau,\intpart{t/\tau})^* = \convex
(t)\quad\text{in $L^1(\Rn)$ for all $t\ge0,$} 
$$
where $\{\convex
(t)\} = \MM(\shorthandgeneralfunctional,\cS^*,\initialconvexset).$ By Proposition
 \ref{prop:fortunate} both $\convex
(\tau,k)_*$ and $\convex
(\tau,k)^*$ are convex. Note that they are the
 minimal and maximal minimizers of $\shorthandgeneralfunctional$ also in $\convexsets.$ Therefore,
$\GMM(\shorthandgeneralfunctional,\convexsets,\initialconvexset) =
\MM(\shorthandgeneralfunctional,\convexsets,\initialconvexset) = \{\convex
(\cdot)\}.$ 
%
\end{proof}

\appendix
\section{Volume-distance inequality}
In this appendix 
we establish a couple of technical results needed in various 
proofs. In particular, the next result
is crucial, and
is an easy modification of the volume-distance inequality of Almgren-Taylor-Wang \cite{ATW:1993} (see also \cite{LS:1995}).

\begin{lemma}[\textbf{Volume-distance inequality}] \label{lem:volume_distance}
Let $r_0>0$, and $F\in \cS^*$  satisfy 
\begin{equation}\label{lower_density_estimates}
P(F,B_r(x)) \ge 
\lowerbounddensity
r^{n-1},\quad x\in \p F,\,\,\,r\in(0,r_0], 
\end{equation}
for some $\lowerbounddensity>0.$ Then 
for any measurable  $E\subset\Rn$ and any $p,\ell>0$ one has
\begin{equation}\label{eq:volume_distance_inequality}
|E\Delta F| \le 
\begin{cases}
\displaystyle 
\frac{c p^n\ell^n }{r_0^{n-1}}\,\anisotropicperimeter(F) +  \frac{1}{\affine(p)} \int_{E\Delta F}\veloc{F}{x}{\ell}~dx & \text{if $\ell>r_0, p\ell>r_0,$}\\[3mm] 
\displaystyle 
cp^n\ell\,\anisotropicperimeter(F) +  
\frac{1}{\affine(p)}
\int_{E\Delta F}\veloc{F}{x}{\ell}~dx
 & \text{if $\ell\in(0,r_0]$ and $p\ell>r_0,$}\\[3mm]
\displaystyle 
c p\ell \,\anisotropicperimeter(F) +  
\frac{1}{\affine(p)}
\int_{E\Delta F}\veloc{F}{x}{\ell}~dx & \text{if $p\ell\in(0,r_0],$} 
\end{cases}
\end{equation}
where $c:=\frac{10^n\omega_n}{\constnorm\lowerbounddensity}.$
\end{lemma}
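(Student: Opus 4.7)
The plan is to split the symmetric difference according to the open tubular neighborhood $T := \{x \in \Rn : \d_F(x) < p\ell\}$ of $\p F$. Write $E \Delta F = [(E\Delta F)\setminus T] \cup [(E\Delta F)\cap T]$. On the first piece $\d_F \ge p\ell$ pointwise, so the strict monotonicity of $\affine$ from (\ref{hyp:main}a) gives $\affine(\d_F/\ell) \ge \affine(p)$, hence
$$
\bigl|(E\Delta F)\setminus T\bigr| \,\le\, \frac{1}{\affine(p)} \int_{(E\Delta F)\setminus T} \affine\!\Big(\tfrac{\d_F(x)}{\ell}\Big)\,dx \,\le\, \frac{1}{\affine(p)}\int_{E\Delta F}\veloc{F}{x}{\ell}\,dx,
$$
which already produces the second summand of each case in \eqref{eq:volume_distance_inequality}. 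It then suffices to bound $|(E\Delta F)\cap T| \le |T|$ by a suitable multiple of $\anisotropicperimeter(F)$.

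For this I would cover $\p F$ by Euclidean balls of a radius $\varrho \in (0, r_0]$ to be chosen later in terms of $p, \ell, r_0$. Using Zorn, pick a maximal pairwise disjoint family $\{B_\varrho(y_i)\}_{i \in I}$ with $y_i \in \p F$. Maximality forces $\p F \subset \bigcup_i B_{2\varrho}(y_i)$, and since every $x\in T$ lies within distance $p\ell$ from $\p F$, one gets $T \subset \bigcup_{i\in I} B_{2\varrho + p\ell}(y_i)$. The cardinality of $I$ is controlled using \eqref{lower_density_estimates} at the admissible radius $\varrho \le r_0$, together with the inequality $P(F) \le \anisotropicperimeter(F)/\constnorm$ from \eqref{norm_bounds}, via
$$
(\#I)\,\lowerbounddensity\,\varrho^{n-1} \le \sum_{i\in I} P(F, B_\varrho(y_i)) \le P(F) \le \frac{\anisotropicperimeter(F)}{\constnorm}.
$$
Combining these two observations yields the master estimate
$$
|T| \,\le\, \omega_n\,(2\varrho + p\ell)^n \cdot \frac{\anisotropicperimeter(F)}{\constnorm\,\lowerbounddensity\,\varrho^{n-1}}.
$$
Finiteness of the right-hand side automatically makes $I$ countable, so the Vitali-type step is legitimate even when $\p F$ is noncompact.

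Finally, I tune $\varrho$ according to the three regimes in \eqref{eq:volume_distance_inequality}. If $p\ell \le r_0$, take $\varrho = p\ell$; then $2\varrho + p\ell = 3p\ell$ and the master estimate collapses to $\tfrac{3^n\omega_n}{\constnorm\lowerbounddensity}\,p\ell\,\anisotropicperimeter(F)$, matching the third case. If $\ell \le r_0 < p\ell$ (so $p > 1$), take $\varrho = \ell$; since $p \ge 1$, $2\varrho + p\ell \le 3p\ell$, producing $\tfrac{3^n\omega_n}{\constnorm\lowerbounddensity}\,p^n\ell\,\anisotropicperimeter(F)$, matching the second case. If $\ell > r_0$ (whence $p\ell > r_0$), take $\varrho = r_0$; from $p\ell > r_0$ one has $2r_0 + p\ell < 3p\ell$, giving $\tfrac{3^n\omega_n}{\constnorm\lowerbounddensity}\cdot\tfrac{p^n\ell^n}{r_0^{n-1}}\,\anisotropicperimeter(F)$, matching the first case. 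Since $3^n \le 10^n$, the stated constant $c$ is valid in every regime. The main (mild) obstacle is the case bookkeeping that selects the right $\varrho$ — one has to avoid $\varrho > r_0$ in order to keep \eqref{lower_density_estimates} applicable while simultaneously keeping $2\varrho + p\ell$ comparable to $p\ell$; the three choices above are exactly the ones that reconcile these two constraints.
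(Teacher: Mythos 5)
Your proposal is correct and follows essentially the same route as the paper: split $E\Delta F$ at the threshold $\d_F=p\ell$, handle the far part by Chebyshev together with the monotonicity of $\affine$, and control the near part by a disjoint-ball covering combined with the lower density estimate \eqref{lower_density_estimates} and \eqref{norm_bounds}, with the same three-case choice of admissible radius. The only difference is cosmetic: you take a maximal disjoint family of balls of tunable radius $\varrho$ covering $\p F$ (giving the constant $3^n$), while the paper applies the Vitali $5r$-covering lemma to balls of radius $2p\ell$ centered on $\p F$ and shrinks them concentrically to the admissible radius (giving $10^n$); both yield the stated constant $c$.
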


\begin{proof}
Define 
$$
\Pvdi
:=\{x\in E\Delta F:\,\,\d_F(x)\ge p\ell\},\quad 
\Qvdi:=\{x\in E\Delta F:\,\,\d_F(x)<p\ell\}.
$$ 
Since $\affine$ is strictly increasing, we have 
$$
\Pvdi 
= \{x\in E\Delta F:\,\, \affine(\d_F(x)/\ell) \ge \affine(p)\},
$$
and hence, by the Chebyshev inequality
$$
|\Pvdi| \le \frac{1}{\affine(p)}\int_P \veloc{F}{x}{\ell}~dx \le  \frac{1}{\affine(p)} \int_{E\Delta F}\veloc{F}{x}{\ell}~dx.
$$
On the other hand, we cover $\Qvdi$ with balls $B_{2p\ell}$ 
of radius $2p\ell$ centered at points of $\p F.$ 
By the Vitali covering lemma we can take a countable subfamily $\{B_{10p\ell}'\}$ still covering $\Qvdi$ with a pairwise disjoint family $\{B_{p\ell}'\}$. 

If $\ell > r_0$ and $p\ell>r_0,$ then by the disjointness of $\{B_{r_0}'\}$ and the estimate \eqref{lower_density_estimates} 
(we cannot apply it with $\ell$)
$$
|\Qvdi| \le \sum_{B_{10p\ell}'} \omega_n(10p\ell)^n =  \tfrac{10^n\omega_n p^n\ell^n }{\lowerbounddensity r_0^{n-1}} \sum_{B_{r_0}'} 
\lowerbounddensity
 r_0^{n-1} \le 
\tfrac{10^n\omega_n p^n\ell^n }{\lowerbounddensity
 r_0^{n-1}} \sum_{B_{r_0}'} P(F, B_{r_0}')\le 
\tfrac{10^n\omega_n p^n\ell^n }{\lowerbounddensity r_0^{n-1}} \, P(F)
$$
On the other hand, if $\ell\le   r_0<p\ell,$ by disjointness of $\{B_\ell'\}$
$$
|\Qvdi| \le \sum_{B_{10p\ell}'} \omega_n(10p\ell)^n = \tfrac{10^n\omega_n p^n\ell }{\lowerbounddensity} \sum_{B_{p\ell}'} \lowerbounddensity \ell^{n-1}
\le 
\tfrac{10^n\omega_np \ell}{\lowerbounddensity} \sum_{B_{p\ell}'} P(F, B_{\ell}') \le 
\tfrac{10^n\omega_n p\ell }{\lowerbounddensity}\,P(F).
$$
Finally, if $p\ell\le r_0$ using the disjointness of $\{B_{p\ell}'\},$ 
$$
|\Qvdi| \le \sum_{B_{10p\ell}'} \omega_n(10p\ell)^n = \tfrac{10^n\omega_n p\ell }{\lowerbounddensity} \sum_{B_{p\ell}'} 
\lowerbounddensity
 p^{n-1}\ell^{n-1}
\le 
\tfrac{10^n\omega_n p\ell }{\lowerbounddensity
} \sum_{B_{p\ell}'} P(F, B_{p\ell}') \le 
\tfrac{10^n\omega_n p\ell }{\lowerbounddensity
}\,P(F).
$$
Now use \eqref{norm_bounds} and $|E\Delta F| = |\Pvdi| + |\Qvdi|$ 
to conclude the proof of estimate \eqref{eq:volume_distance_inequality}.
\end{proof}

\begin{lemma}[\textbf{Morrey-type estimate}]
Suppose that $\forcing$ satisfies (\ref{hyp:main}c). Then 
there exists $\constantforcingMorrey
 >0$ such that 
\begin{equation}\label{eq:on_existence_of_gammag}
\sup_{0<|A|<\omega_n
\constantforcingMorrey
^n} \tfrac{1}{|A|^{\frac{n-1}{n}}}
\int_{A}|\forcing|~dx \le \tfrac{
\constnorm
 n\omega_n^{1/n}}{4},
\end{equation}
where we write
$\constantforcingMorrey^n=(\constantforcingMorrey)^n$.
\end{lemma}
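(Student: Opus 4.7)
The plan is to show that the map
$A \mapsto |A|^{-(n-1)/n}\int_A |g|\,dx$ vanishes as $|A| \to 0^+$,
uniformly, using H\"older's inequality together with the integrability
of $g$ from (\ref{hyp:main}c); once this is established,
choosing $\constantforcingMorrey$ small enough so that this quantity
stays below $c_\norm n \omega_n^{1/n}/4$ yields \eqref{eq:on_existence_of_gammag}.

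First I would handle the non-critical cases. If
$\forcingexponent \in (n,+\infty]$, I apply H\"older with exponents
$\forcingexponent$ and $\forcingexponent/(\forcingexponent-1)$ to obtain
\begin{equation*}
\int_A |\forcing|\,dx \le \|\forcing\|_{L^\forcingexponent(\Rn)}\,
|A|^{1-1/\forcingexponent},
\end{equation*}
with the usual convention if $\forcingexponent=+\infty$. Dividing by
$|A|^{(n-1)/n}$ gives
\begin{equation*}
\frac{1}{|A|^{(n-1)/n}}\int_A |\forcing|\,dx \le
\|\forcing\|_{L^\forcingexponent(\Rn)}\,|A|^{1/n-1/\forcingexponent},
\end{equation*}
and since $1/n - 1/\forcingexponent > 0$ we can choose
$\constantforcingMorrey>0$ small enough, depending only on $n$,
$\constnorm$ and $\|\forcing\|_{L^\forcingexponent(\Rn)}$, so that
the right-hand side is bounded by $c_\norm n\omega_n^{1/n}/4$
whenever $|A|<\omega_n \constantforcingMorrey^n$.

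The critical case $\forcingexponent=n$ is the only slightly delicate point:
here H\"older yields
$\int_A |\forcing|\,dx \le \bigl(\int_A |\forcing|^n\,dx\bigr)^{1/n}
|A|^{(n-1)/n}$, and the prefactor no longer gains a positive power of
$|A|$. To compensate, I invoke the absolute continuity of the Lebesgue
integral applied to $|\forcing|^n \in L^1(\Rn)$: given any
$\epsilon>0$, there exists $\delta=\delta(\epsilon,\forcing)>0$ such
that $|A|<\delta$ implies $\int_A |\forcing|^n\,dx < \epsilon^n$.
Consequently
\begin{equation*}
\frac{1}{|A|^{(n-1)/n}}\int_A |\forcing|\,dx \le
\Bigl(\int_A |\forcing|^n\,dx\Bigr)^{1/n} < \epsilon
\end{equation*}
for all such $A$. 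Choosing $\epsilon = c_\norm n\omega_n^{1/n}/4$ and
$\constantforcingMorrey$ so small that $\omega_n \constantforcingMorrey^n
< \delta$ concludes this case.

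Combining the three cases gives the desired $\constantforcingMorrey>0$,
which depends only on $n$, $\constnorm$, $\forcingexponent$ and the
norm/modulus of absolute continuity of $\forcing$. The only mild obstacle
is the critical exponent $\forcingexponent=n$, which is handled by
replacing the scaling argument with absolute continuity of the integral;
everything else is a direct H\"older computation.
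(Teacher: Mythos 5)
Your proposal is correct and follows essentially the same route as the paper: H\"older's inequality giving a positive power of $|A|$ when $\forcingexponent\in(n,+\infty]$ (the paper just treats $\forcingexponent=+\infty$ as a separate but identical computation), and H\"older plus absolute continuity of the Lebesgue integral applied to $|\forcing|^n$ in the critical case $\forcingexponent=n$. Nothing further is needed.
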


\begin{proof}
If $\forcingexponent=+\infty,$ then
$$
\int_{A}|\forcing|~dx \le \|\forcing\|_\infty |A|^{\frac{1}{n}}|A|^{\frac{n-1}{n}} \le \tfrac{\constnorm n\omega_n^{1/n}}{4}|A|^{\frac{n-1}{n}}
$$
provided $|A|\le \omega_n \constantforcingMorrey^n$ with $\constantforcingMorrey:=\tfrac{\constnorm n}{4(1+\|\forcing\|_\infty)}.$
If $\forcingexponent\in (n,+\infty),$ then by the H\"older inequality
$$
\int_{A}|\forcing|~dx \le \|\forcing\|_{L^\forcingexponent(\Rn)}|A|^{\frac{\forcingexponent
-1}{\forcingexponent
}}\le 
\tfrac{\constnorm n\omega_n^{1/n}}{4}|A|^{\frac{n-1}{n}}
$$
provided $|A|\le \omega_n \constantforcingMorrey^n$ 
with $\constantforcingMorrey
:=\omega_n^{-1/n} 
\big(\tfrac{\constnorm n}{4(1+\|\forcing \|_{L^\forcingexponent})}\big)^{\frac{p}{p-n}}.$
Finally, if $\forcingexponent
=n,$ then by the H\"older inequality for any $A\subset\Rn$
$$
\int_{A} |\forcing|~dx \le \Big(\int_{A} |\forcing|^n ~dx\Big)^{\frac{1}{n}}
|A|^{\frac{n-1}{n}}.
$$
By the absolute continuity of the Lebesgue integral, there exists $\constantforcingMorrey>0$ such that if $|A|<\omega_n
\constantforcingMorrey^n,$ then
$$
 \Big(\int_{A} |\forcing|^n ~dx\Big)^{\frac{1}{n}} \le \frac{\constnorm n\omega_n^{1/n}}{4}.
$$
\end{proof}

\section{Comparison principles} 
The next lemma is a generalization of \cite[Lemma 7.3]{ATW:1993}. 

\begin{lemma}\label{lem:discrete_comparison}
Let $\norm$ be a $C^3$-elliptic anisotropy, $\affine,\forcing$ satisfy 
Hypothesis \eqref{hyp:main} and let 
$\forcing$ be continuous. Let $E\in \cS^*,$ $\tau>0,$ $k\ge0$ and 
$E_\tau$ be a  
minimizer of $\shorthandgeneralfunctional(\cdot; E,\tau).$ Let $F$ and $F_\tau$ be $C^{2}$-sets. 

\begin{itemize}
\item[\rm(a)] Let $E\subset F,$ $E_\tau\subset F_\tau$ and 
\begin{equation}\label{outer_barrier}
\affine\Big(\tfrac{\sd_{F}}{\tau}\Big) > -\kappa_{F_\tau}^\norm - \forcing  \quad \text{on $\p F_\tau.$}
\end{equation}
Then $\p E_\tau\cap \p F_\tau=\emptyset.$

\item[\rm(b)] Let $F\subset E,$ $F_\tau\subset E_\tau$ and 
\begin{equation*}
\affine\Big(\tfrac{\sd_{F}}{\tau}\Big) < -\kappa_{F_\tau}^\norm -  \forcing \quad \text{on $\p F_\tau.$} 
\end{equation*}
Then $\p E_\tau\cap \p F_\tau=\emptyset.$
\end{itemize}
\end{lemma}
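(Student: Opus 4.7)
The plan is to recast $E_\tau$ as a minimizer of a prescribed anisotropic mean curvature functional and then force a contradiction at a hypothetical touching point by combining the monotonicity of $\affine$ with the strict supersolution (resp. subsolution) property of $F_\tau$.

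First, by the decomposition \eqref{eq:prescur_funco}, $E_\tau$ is a minimizer of
$$E'\mapsto \anisotropicperimeter(E')+\int_{E'}h_E\,dx, \qquad h_E:=\affine(\sd_E/\tau)+\forcing,$$
and we analogously set $h_F:=\affine(\sd_F/\tau)+\forcing$. In case (a), $E\subset F$ gives $\sd_E\ge \sd_F$ pointwise on $\Rn$, so by strict monotonicity of $\affine$ one has $h_E\ge h_F$; hypothesis \eqref{outer_barrier} reads exactly $h_F>-\anisotropicmeancurvature_{F_\tau}$ on $\p F_\tau$, i.e.\ $F_\tau$ is a strict $C^2$ classical supersolution of the Euler--Lagrange equation $\anisotropicmeancurvature=-h_F$.

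Assume for contradiction that $x_0\in\p E_\tau\cap\p F_\tau$. Since $E_\tau\subset F_\tau$ and $\p F_\tau$ is $C^2$, $F_\tau$ provides a $C^2$ exterior barrier at $x_0$. Under the present regularity ($\norm$ elliptic $C^3$, $\affine\in C^\Holderexponent$, $\forcing\in C^\Holderexponent$), standard regularity theory for $\Lambda$-minimizers of the anisotropic perimeter with H\"older bulk forcing (in the spirit of \cite{ATW:1993} and of \cite[Ch.~28]{Maggi:2012}) upgrades $\p E_\tau$ to $C^{2,\Holderexponent}$ in a neighborhood of $x_0$, where the pointwise Euler--Lagrange equation $\anisotropicmeancurvature_{E_\tau}=-h_E$ holds. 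Since $E_\tau\subset F_\tau$ are both $C^2$ near $x_0$ and tangent at $x_0$ with common outward unit normal, expressing both boundaries as graphs over the common tangent hyperplane and comparing the $\norm$-second fundamental forms yields $\anisotropicmeancurvature_{E_\tau}(x_0)\ge \anisotropicmeancurvature_{F_\tau}(x_0)$. Combining this with $h_E(x_0)\ge h_F(x_0)$ and \eqref{outer_barrier} gives
$$\anisotropicmeancurvature_{E_\tau}(x_0)=-h_E(x_0)\le -h_F(x_0)<\anisotropicmeancurvature_{F_\tau}(x_0),$$
a contradiction, proving (a). Part (b) is entirely analogous: now $F\subset E$ forces $\sd_F\ge \sd_E$ hence $h_F\ge h_E$; $F_\tau$ becomes a strict subsolution; the inclusion $F_\tau\subset E_\tau$ reverses the curvature comparison to $\anisotropicmeancurvature_{E_\tau}(x_0)\le \anisotropicmeancurvature_{F_\tau}(x_0)$; and the same Euler--Lagrange chain closes the contradiction.

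The main obstacle is the regularity step, namely upgrading the minimizer $E_\tau$ to $C^2$ at the touching point $x_0$ in order to write the Euler--Lagrange equation and compare anisotropic curvatures pointwise. This is done by exploiting the $C^2$ outer (resp.\ inner) barrier $F_\tau$: it yields a uniform one-sided ball condition for $\p E_\tau$ at $x_0$ which, coupled with the H\"older prescribed curvature and the $C^3$-ellipticity of $\norm$, triggers the classical Schauder theory for the $\norm$-mean curvature operator. Once that regularity is in hand, the rest is a direct application of the strong maximum principle for second-order anisotropic elliptic operators at a one-sided tangency.
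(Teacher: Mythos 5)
Your proposal is correct and follows essentially the same route as the paper: argue by contradiction at a touching point $x_0\in\p E_\tau\cap\p F_\tau$, rewrite $\shorthandgeneralfunctional(\cdot;E,\tau)$ as a prescribed $\norm$-curvature functional, write the Euler--Lagrange equation at $x_0$, compare $\kappa^\norm_{E_\tau}(x_0)\ge\kappa^\norm_{F_\tau}(x_0)$ at the tangency, and combine $\sd_E\ge\sd_F$ with the monotonicity of $\affine$ and the strict barrier inequality \eqref{outer_barrier}. The only (cosmetic) difference is in how regularity of $\p E_\tau$ at $x_0$ is justified: where you invoke a one-sided barrier plus Schauder-type regularity for $\Lambda$-minimizers, the paper observes that, thanks to the smooth touching set $F_\tau$, a blow-up of $E_\tau$ at $x_0$ would be a $\norm$-perimeter minimizing cone containing a halfspace, which is impossible unless $x_0\in\p^*E_\tau$, and then elliptic regularity yields the $C^2$ smoothness needed to write the first variation pointwise.
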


\begin{proof}
We prove only (a), assertion (b) being similar. By contradiction, assume that $x_0\in \p E_\tau\cap \p F_\tau.$ Since $\norm$ is smooth and elliptic, repeating the arguments of \cite[Chapter 17]{Giusti:1984} we can show that singular minimal cones locally minimizing $\norm$-perimeter cannot contain a halfspace. By the smoothness of $F_\tau,$ $x_0\in \p^*E_\tau.$ Moreover, by elliptic regularity, $\p^*E_\tau$ is $C^2,$
and hence, 
computing the first variation of $\shorthandgeneralfunctional(\cdot; E,\tau)$ at $E_\tau$ we get 
\begin{equation}\label{nepomnyu}
\affine\Big(\tfrac{\sd_{E}(x_0)}{\tau}\Big) = -\kappa_{F_\tau}^\norm(x_0) - \forcing(x_0).
\end{equation}
By the choice of $x_0,$ $\kappa_{E_\tau}^\norm(x_0)\ge \kappa_{F_\tau}^\norm(x_0)$ and by 
assumption $E\subset F,$ $\sd_{E}(x_0) \ge \sd_{F}(x_0).$ Therefore, combining \eqref{nepomnyu} and \eqref{outer_barrier} we get 
$$
\affine\Big(\tfrac{\sd_{F}(x_0)}{\tau}\Big) > -\kappa_{F_\tau}^\norm(x_0) - \forcing(x_0) \ge -\kappa_{E_\tau}^\norm(x_0) - \forcing(x_0)= \affine\Big(\tfrac{\sd_{E}(x_0)}{\tau}\Big), 
$$
a contradiction.
\end{proof}

Comparison principles for \eqref{eq:prescur_funco} are
well-established provided that the prescribed mean curvatures are comparable.

\begin{theorem}[\textbf{Comparison principles}]\label{teo:compare}
Let $\affine_1,\affine_2,\forcing_1,\forcing_2$ satisfy 
(\ref{hyp:main}a) and (\ref{hyp:main}c). Let $F_1,F_2\in \cS^*$ and $\tau>0.$ 
The following properties hold:

\begin{itemize}
\item[\rm(i)] If 
$F_1\subset F_2,$ $\affine_1\ge\affine_2$ and $\forcing_1>\forcing_2$ 
a.e. in $\Rn,$ then minimizers $F_\tau^i$ of $\sF_{\norm,\affine_i,\forcing_i}(\cdot;F_i,\tau)$ satisfy  
$
F_\tau^1\subset F_\tau^2.
$

\item[\rm(ii)] If $F_1\Subset F_2,$ $\affine_1\ge\affine_2$  and 
$\forcing_1\ge \forcing_2$ a.e. in $\Rn,$ then minimizers $F_\tau^i$ of 
$\sF_{\norm,\affine_i,\forcing_i}(\cdot;F_i,\tau)$ satisfy
$
F_\tau^1\subset F_\tau^2.
$

\item[\rm(iii)] If $F_1\subset F_2,$ $\affine_1\ge\affine_2$  
and $\forcing_1\ge \forcing_2$ a.e. in $\Rn,$ then there exist minimizers 
$F_{\tau*}^1$ of $\sF_{\norm,\affine_1,\forcing_1}(\cdot;F_1,\tau)$ and 
$F_\tau^{2*}$ of $\sF_{\norm,\affine_2,\forcing_2}(\cdot;F_2,\tau)$ such that
$
F_{\tau*}^1\subset F_\tau^2 
$
and
$
F_\tau^1\subset F_{\tau}^{2*}
$
for all minimizers $F_\tau^i$ of $\sF_{\norm,\affine_i,\forcing_i}(\cdot;F_i,\tau).$
\end{itemize}
\end{theorem}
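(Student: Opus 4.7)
The plan is to exploit the equivalent prescribed mean curvature reformulation \eqref{eq:prescur_funco}: since each $\affine_i$ is odd and the sets involved have finite measure, a set $F_\tau^i\in \cS^*$ minimizes $\sF_{\norm,\affine_i,\forcing_i}(\cdot;F_i,\tau)$ if and only if it minimizes $E\mapsto \anisotropicperimeter(E) + \int_E h_i\,dx$, where
\begin{equation*}
h_i(x) := \affine_i\!\left(\tfrac{\sd_{F_i}(x)}{\tau}\right) + \forcing_i(x).
\end{equation*}
First I would record the elementary signed-distance monotonicity $F_1\subset F_2 \Longrightarrow \sd_{F_1}\ge \sd_{F_2}$ on $\Rn$: for $x\in F_1$ (respectively $x\in F_2^c$) any segment from $x$ to $\p F_2$ (respectively $\p F_1$) must cross $\p F_1$ (respectively $\p F_2$), and the intermediate case $x\in F_2\setminus F_1$ is trivial since the two signed distances have opposite signs. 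Combined with the strict monotonicity of each $\affine_i$ and the hypotheses $\affine_1\ge \affine_2$, $\forcing_1\ge \forcing_2$, this yields $h_1\ge h_2$ on $\Rn$. Under the stronger hypothesis $F_1\Subset F_2$, the same crossing argument, together with the triangle inequality $\d_{F_1}(x)+\d_{F_2}(x)\ge \dist(\p F_1,\p F_2)$ handling the region $F_2\setminus F_1$, upgrades this to the uniform gap $\sd_{F_1}-\sd_{F_2}\ge \delta_0 := \dist(\p F_1,\p F_2)>0$; by strict monotonicity of $\affine_2$ this gives $h_1 > h_2$ pointwise on $\Rn$.

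Next I would run the standard lattice-submodularity argument: testing the minimality of $F_\tau^1$ against $F_\tau^1\cap F_\tau^2$ and of $F_\tau^2$ against $F_\tau^1\cup F_\tau^2$, summing the two resulting inequalities, and invoking submodularity $\anisotropicperimeter(A\cap B)+\anisotropicperimeter(A\cup B)\le \anisotropicperimeter(A)+\anisotropicperimeter(B)$ together with the identity $\int_{A\cap B}h_1 + \int_{A\cup B}h_2 - \int_A h_1 - \int_B h_2 = -\int_{A\setminus B}(h_1-h_2)$, produces the key double inequality
\begin{equation*}
0 \le \int_{F_\tau^1\setminus F_\tau^2}(h_1-h_2)\,dx \le \anisotropicperimeter(F_\tau^1\cap F_\tau^2)+\anisotropicperimeter(F_\tau^1\cup F_\tau^2)-\anisotropicperimeter(F_\tau^1)-\anisotropicperimeter(F_\tau^2) \le 0,
\end{equation*}
with the leftmost inequality coming from $h_1\ge h_2$ and the rightmost from submodularity.

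For (i), the hypothesis $\forcing_1>\forcing_2$ a.e.\ gives $h_1>h_2$ a.e.\ and hence $|F_\tau^1\setminus F_\tau^2|=0$; for (ii), the uniform gap yields the pointwise strict $h_1>h_2$ and the same conclusion follows. For (iii), with only $h_1\ge h_2$, the chain collapses to a chain of equalities, so both separate minimality inequalities must be saturated: $F_\tau^1\cap F_\tau^2$ is itself a minimizer of $\sF_{\norm,\affine_1,\forcing_1}(\cdot;F_1,\tau)$ and $F_\tau^1\cup F_\tau^2$ of $\sF_{\norm,\affine_2,\forcing_2}(\cdot;F_2,\tau)$. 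Specialising to the case of a single functional gives closure of each minimizer set under finite intersections and unions; combined with the uniform $BV$-bound of Lemma \ref{lem:existence_minima} and $L_\loc^1$-lower semicontinuity, a standard compactness argument then produces a minimal minimizer $F_{\tau*}^1$ and a maximal minimizer $F_\tau^{2*}$ of the two functionals. For any minimizer $F_\tau^2$ of the second problem, $F_{\tau*}^1\cap F_\tau^2$ is again a minimizer of the first and so equals $F_{\tau*}^1$ by minimality, giving $F_{\tau*}^1\subset F_\tau^2$; the symmetric union argument handles $F_\tau^{2*}$. The main technical point will be the uniform gap step in (ii): its role is precisely to convert the non-strict hypotheses on $\affine$ and $\forcing$ into a pointwise strict $h_1>h_2$, which is indispensable for the integral inequality above to force the desired inclusion.
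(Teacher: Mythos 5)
Your proposal is correct and follows essentially the same route as the paper: reduce each functional to the prescribed mean curvature form with $h_i=\affine_i(\sd_{F_i}/\tau)+\forcing_i$, observe that the hypotheses of (i) and (ii) give $h_1>h_2$ a.e.\ (via the signed-distance monotonicity, with the uniform gap $\sd_{F_1}\ge\sd_{F_2}+\dist(\p F_1,\p F_2)$ in case (ii)) while (iii) gives $h_1\ge h_2$, and then run the standard submodularity cut-and-paste comparison. The paper delegates this last step to ``standard arguments'' with a citation, whereas you spell it out (including the saturation argument in (iii) and the construction of minimal/maximal minimizers), and your details are accurate.
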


Indeed, assumptions of (i) and (ii) 
imply that $h_{\norm,\affine_1,\forcing_1}>h_{\norm,\affine_2,\forcing_2}$ 
a.e. in $\Rn,$ while (iii) 
implies $h_{\norm,\affine_1,\forcing_1}\ge h_{\norm,\affine_2,\forcing_2},$
see \eqref{eq:h}. Thus
the proof  follows from standard arguments (see e.g. \cite{BKh:2018} and  references therein).

\begin{corollary}[\textbf{Minimal and maximal minimizers}]\label{cor:minmaxprinciple}
Let $\affine$ satisfy 
(\ref{hyp:main}a) and $\forcing$ 
satisfy (\ref{hyp:main}c), and let $\tau>0$. Then for any $F\in \cS^*$ 
there exist minimizers $F_{\tau*},F_\tau^*$ 
(called the minimal and maximal minimizer) 
of $\shorthandgeneralfunctional(\cdot;F,\tau)$ such that for every minimizer $F_\tau,$
$$
F_{\tau*} \subseteq F_\tau \subseteq F_\tau^*.
$$
\end{corollary}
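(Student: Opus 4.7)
The plan is to exploit the lattice structure of the family of minimizers, which comes from the submodularity of the anisotropic perimeter combined with the set-additivity $\int_{E_1\cup E_2}h\,dx+\int_{E_1\cap E_2}h\,dx=\int_{E_1}h\,dx+\int_{E_2}h\,dx$ of the volume term. After rewriting $\shorthandgeneralfunctional(\cdot;F,\tau)$ in the prescribed mean curvature form \eqref{eq:prescur_funco} with $h=h_{\affine,F,\forcing}$ as in \eqref{eq:h}, these two facts will yield
$$
\shorthandgeneralfunctional(E_1\cup E_2;F,\tau)+\shorthandgeneralfunctional(E_1\cap E_2;F,\tau)\le \shorthandgeneralfunctional(E_1;F,\tau)+\shorthandgeneralfunctional(E_2;F,\tau),
$$
so that whenever $E_1,E_2$ are both minimizers, the right-hand side equals twice the minimum and equality is forced; hence both $E_1\cup E_2$ and $E_1\cap E_2$ are minimizers as well. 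This closure under unions and intersections is the workhorse of the construction.

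Next I would record the uniform bounds $\anisotropicperimeter(E)\le \shorthandgeneralfunctional(F;F,\tau)+\|\forcing\|_{L^1(\Rn)}$ and (through \eqref{aniso_isop_ineq}) $|E|\le C$, valid for every minimizer $E$, so that the family of minimizers is relatively compact in $L_\loc^1(\Rn)$ and any $L_\loc^1$-limit of minimizers remains a minimizer by the lower semicontinuity established in the proof of Lemma \ref{lem:existence_minima}. To construct $F_\tau^*$, set $m^*:=\sup\{|E|:E\text{ a minimizer}\}<+\infty$, choose a sequence of minimizers $E_n$ with $|E_n|\nearrow m^*$, and form $\widetilde E_n:=E_1\cup\cdots\cup E_n$. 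The lattice property makes $\{\widetilde E_n\}$ an increasing sequence of minimizers with $|\widetilde E_n|\nearrow m^*$, and (working with the density-$1$ representatives stipulated by \eqref{eq:def_S}) its pointwise union $F_\tau^*$ is the $L_\loc^1$-limit of $\widetilde E_n$, hence itself a minimizer with $|F_\tau^*|=m^*$. For any other minimizer $E$, the set $E\cup F_\tau^*$ is a minimizer of measure $\ge m^*$ and must therefore coincide with $F_\tau^*$, yielding $E\subseteq F_\tau^*$. The construction of $F_{\tau*}$ is symmetric, replacing $\cup$ and $\sup$ by $\cap$ and $\inf$.

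The main obstacle is not the analysis but the measure-theoretic bookkeeping: the inclusions $F_{\tau*}\subseteq F_\tau\subseteq F_\tau^*$ must hold pointwise rather than merely modulo a null set, and one must ensure that the $L_\loc^1$-limit of $\widetilde E_n$ genuinely agrees with the set-theoretic union (and analogously for $F_{\tau*}$). This is handled by systematically taking the density-$1$ representatives of \eqref{eq:def_S}; once this convention is enforced, the monotone limits are unambiguous and the statement follows.
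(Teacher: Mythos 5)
Your argument is correct, but it takes a more self-contained route than the paper. The paper obtains Corollary \ref{cor:minmaxprinciple} as an immediate special case of Theorem \ref{teo:compare}(iii), applied with $F_1=F_2=F$, $\affine_1=\affine_2=\affine$, $\forcing_1=\forcing_2=\forcing$; the proof of that comparison theorem is in turn delegated to standard cut-and-paste arguments in the cited literature (e.g.\ \cite{BKh:2018}), which rest on exactly the mechanism you isolate: submodularity of $\anisotropicperimeter$ together with the additivity $\chi_{(E_1\cup E_2)\Delta F}+\chi_{(E_1\cap E_2)\Delta F}=\chi_{E_1\Delta F}+\chi_{E_2\Delta F}$ of the dissipation term, so that the class of minimizers is closed under unions and intersections. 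What you add on top of this is the explicit construction of the extremal minimizers by maximizing (resp.\ minimizing) the Lebesgue measure over the minimizer class, passing to a monotone limit, and using lower semicontinuity plus the uniform perimeter/volume bounds to keep the limit admissible; the paper never spells this out, since it is packaged inside the quoted comparison principle. Both routes are sound: the paper's is shorter but relies on an external reference, while yours buys a complete proof from first principles, with the only delicate points being the ones you flag (membership of the monotone limit in $\cS^*$ and working with density-one representatives so that the inclusions $F_{\tau*}\subseteq F_\tau\subseteq F_\tau^*$ hold genuinely and not only up to null sets), both of which you handle correctly.
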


\begin{corollary}\label{cor:strong_comparison}
Assume that $\forcing\equiv0,$ $F_1\Subset F_2$ with 
$\dist(\p F_1,\p F_2)=\epsilon>0.$ Then the minimizers $F_\tau^i$ of 
$\shorthandgeneralfunctional
(\cdot;F_i,\tau)$ 
satisfy $F_\tau^1\Subset F_\tau^2$ and $\dist(\p F_\tau^1,\p F_\tau^2)\ge\epsilon.$
\end{corollary}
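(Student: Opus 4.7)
The plan is to exploit the translation invariance of $\shorthandgeneralfunctional(\cdot;F,\tau)$, which holds because $\forcing\equiv 0$, combined with Theorem~\ref{teo:compare}(ii) applied to small translates of $F_1$. Since both the anisotropic perimeter and the distance-type term $\int_{E\Delta F}\veloc{F}{x}{\tau}\,dx$ are invariant under simultaneous translation, a change of variables gives
$$
\shorthandgeneralfunctional(E+\eta;F+\eta,\tau)=\shorthandgeneralfunctional(E;F,\tau)
$$
for every $\eta\in\Rn$ and $E,F\in\cS^*$. In particular, if $F_\tau^1$ minimizes $\shorthandgeneralfunctional(\cdot;F_1,\tau)$, then $F_\tau^1+\eta$ minimizes $\shorthandgeneralfunctional(\cdot;F_1+\eta,\tau)$.

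Next, I fix $\eta\in\Rn$ with $|\eta|<\epsilon$ and verify that $F_1+\eta\Subset F_2$. The hypothesis $\dist(\p F_1,\p F_2)=\epsilon$ together with $F_1\subset F_2$ forces every point of $F_1$ to lie at distance $\geq\epsilon$ from $\p F_2$: any segment joining $x\in F_1$ to $y\in\p F_2$ must cross $\p F_1$ at some point $z$, whence $|x-y|\geq |z-y|\geq\epsilon$. Consequently $F_1+\eta\subset F_2$ and $\dist(\p(F_1+\eta),\p F_2)\geq\epsilon-|\eta|>0$. Theorem~\ref{teo:compare}(ii), applied with $\affine_1=\affine_2=\affine$ and $\forcing_1\equiv\forcing_2\equiv 0$, then gives
$$
F_\tau^1+\eta\subseteq F_\tau^2
$$
for every minimizer $F_\tau^2$ of $\shorthandgeneralfunctional(\cdot;F_2,\tau)$.

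Letting $\eta$ range over the open ball $B_\epsilon(0)$ yields the Minkowski inclusion $F_\tau^1+B_\epsilon(0)\subseteq F_\tau^2$. For any $z\in\p F_\tau^1$, approximating $z$ by density-$1$ points $x_k\in F_\tau^1$ with $x_k\to z$ and using $B_\epsilon(x_k)\subseteq F_\tau^2$, one deduces $B_\epsilon(z)\subseteq F_\tau^2$; since a boundary point of $F_\tau^2$ cannot admit an open neighborhood contained in $F_\tau^2$, no point of $\p F_\tau^2$ can lie within distance $\epsilon$ of $z$. This gives $\dist(\p F_\tau^1,\p F_\tau^2)\geq\epsilon$, and combined with the boundedness of $F_\tau^1$ furnished by Lemma~\ref{lem:obsledovanie} we conclude $F_\tau^1\Subset F_\tau^2$. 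The main obstacle in the argument is the translation step itself: it relies crucially on $\forcing\equiv 0$ to preserve minimality under translation, and on the boundary distance hypothesis to guarantee $F_1+\eta\Subset F_2$ for all $|\eta|<\epsilon$. Once these are in place, the statement reduces to a direct application of the previously established comparison principle.
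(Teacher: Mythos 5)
Your proposal is correct and follows essentially the same route as the paper: the paper likewise uses translation invariance of $\shorthandgeneralfunctional$ (valid since $\forcing\equiv0$) to see that $F_\tau^1+\xi$ minimizes $\shorthandgeneralfunctional(\cdot;F_1+\xi,\tau)$, observes $F_1+\xi\Subset F_2$ for $\xi\in B_\epsilon(0)$, and applies the comparison principle of Theorem~\ref{teo:compare}(ii) to conclude $\dist(\p F_\tau^1,\p F_\tau^2)\ge\epsilon$. Your extra care with the Minkowski inclusion and the boundedness of $F_\tau^1$ via Lemma~\ref{lem:obsledovanie} just spells out details the paper leaves implicit.
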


Indeed, since $g \equiv 0$, 
$\shorthandgeneralfunctional(E;F,\tau) = \shorthandgeneralfunctional
(E+\xi;F+\xi,\tau) 
$
for any $\xi\in\Rn.$ By assumptions on $F_1$ and $F_2,$ 
for any $\xi\in B_\epsilon(0)$ we have $F_1+\xi\Subset F_2,$ 
and hence
$F_\tau^1 + \xi\Subset F_\tau^2$ 
 by Theorem \ref{teo:compare} (b).
Thus, $\dist(\p F_\tau^1,\p F_\tau^2)\ge\epsilon.$

As in \cite{BKh:2018} 
we can introduce a comparison principle for two $\GMM$s.

\begin{theorem}
Let $\affine_1,\affine_2,\forcing_1,\forcing_2$ 
satisfy Hypothesis \eqref{hyp:main} with $\affine_1\ge\affine_2$ and $\forcing_1\ge \forcing_2.$ Let $F_1^0,F_2^0\in \cS^*$ be such that $F_1^0\subset F_2^0.$ Then: 

\begin{itemize}
\item[\rm(a)] for any $F_1(\cdot)\in \GMM(\sF_{\norm,\affine_1,\forcing_1},\cS^*,F_1^0)$ 
there exists $F_2^*(\cdot)\in \GMM(\sF_{\norm,\affine_2,\forcing_2},\cS^*,F_2^0)$ such that 
$$
F_1(t) \subset F_2^*(t),\quad t\ge0;
$$

\item[\rm(b)] for any $F_2(\cdot)\in \GMM(\sF_{\norm,\affine_2,
\forcing_2},\cS^*,F_2^0)$ there exists $F_{1 *}(\cdot)\in \GMM(\sF_{\norm,\affine_1,\forcing_1},\cS^*,F_1^0)$ such that 
$$
F_{1 *}(t) \subset F_2(t),\quad t\ge0.
$$
\end{itemize}
\end{theorem}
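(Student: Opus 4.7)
The plan is to prove (a) by inductively building a companion $\sF_{\norm,\affine_2,\forcing_2}$-flat flow above the given $\sF_{\norm,\affine_1,\forcing_1}$-flat flow and then extracting an $L_\loc^1$-limit; part (b) is symmetric. Assume $F_1(\cdot)$ arises as an $L_\loc^1$-limit of flat flows $\{F_1(\tau_j,k)\}_{k\ge 0}$ starting from $F_1^0$, with $\tau_j\to 0^+$. For each $j$, set $F_2^*(\tau_j,0):=F_2^0$ and, at step $k$, apply Theorem \ref{teo:compare}(iii) with base sets $F_1(\tau_j,k-1)\subset F_2^*(\tau_j,k-1)$ (the inclusion being the inductive hypothesis). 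The hypotheses $\affine_1\ge\affine_2$ and $\forcing_1\ge\forcing_2$, combined with the pointwise inequality $\sd_{F_1(\tau_j,k-1)}\ge \sd_{F_2^*(\tau_j,k-1)}$ forced by the base inclusion, give $h_{\affine_1,F_1(\tau_j,k-1),\forcing_1}\ge h_{\affine_2,F_2^*(\tau_j,k-1),\forcing_2}$ a.e.\ in $\Rn$ (cf.\ \eqref{eq:h}); Theorem \ref{teo:compare}(iii) then delivers a maximal minimizer $F_2^*(\tau_j,k)$ of $\sF_{\norm,\affine_2,\forcing_2}(\cdot;F_2^*(\tau_j,k-1),\tau_j)$ containing every minimizer of $\sF_{\norm,\affine_1,\forcing_1}(\cdot;F_1(\tau_j,k-1),\tau_j)$, in particular $F_1(\tau_j,k)$.

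Next, I would apply the estimates of Section \ref{sec:proof_of_Theorem_teo:existence_gmm} to the $\sF_{\norm,\affine_2,\forcing_2}$-flat flow $\{F_2^*(\tau_j,k)\}_{k\ge 0}$: the uniform perimeter bound analogous to \eqref{unif_per_bounds} and the discrete time-continuity estimate \eqref{disc_holtime} depend only on the data and on $\anisotropicperimeter(F_2^0)$ and $\|\forcing_2^-\|_{L^1}$. A diagonal extraction along the rationals, followed by uniform-continuity extension exactly as in the proof of Theorem \ref{teo:existence_gmm}, yields a subsequence (not relabelled) and a family $F_2^*(\cdot)\in\GMM(\sF_{\norm,\affine_2,\forcing_2},\cS^*,F_2^0)$ such that $F_2^*(\tau_j,\intpart{t/\tau_j})\to F_2^*(t)$ in $L_\loc^1(\Rn)$ for every $t\ge 0$; along such a subsequence the original convergence $F_1(\tau_j,\intpart{t/\tau_j})\to F_1(t)$ is preserved. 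Since set inclusions pass to $L_\loc^1$-limits up to negligible sets, one obtains $F_1(t)\subset F_2^*(t)$ for all $t\ge 0$, proving (a).

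Part (b) is obtained by dualizing: given $F_2(\cdot)\in \GMM(\sF_{\norm,\affine_2,\forcing_2},\cS^*,F_2^0)$ realized along some $\tau_j\to 0^+$, one builds $\{F_{1*}(\tau_j,k)\}$ by setting $F_{1*}(\tau_j,0):=F_1^0$ and selecting at each step the \emph{minimal} minimizer of $\sF_{\norm,\affine_1,\forcing_1}(\cdot;F_{1*}(\tau_j,k-1),\tau_j)$ provided by the first half of Theorem \ref{teo:compare}(iii); this minimizer is contained in every minimizer of $\sF_{\norm,\affine_2,\forcing_2}(\cdot;F_2(\tau_j,k-1),\tau_j)$, hence in $F_2(\tau_j,k)$. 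An $L_\loc^1$-subsequential extraction as above produces the desired GMM $F_{1*}(\cdot)$ with $F_{1*}(t)\subset F_2(t)$ for all $t\ge 0$. The main obstacle is simply the bookkeeping of the diagonal extraction consistent with the given GMM on the other side, together with the systematic use of the minimal/maximal minimizer selection from Corollary \ref{cor:minmaxprinciple}; no analytic input beyond what is already proved in Section \ref{sec:proof_of_Theorem_teo:existence_gmm} and in Theorem \ref{teo:compare} is required.
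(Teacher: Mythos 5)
Your proposal is correct and follows essentially the same route as the paper: build the companion flat flow from $F_2^0$ (resp.\ $F_1^0$) using the maximal (resp.\ minimal) minimizers, propagate the inclusion step by step via Theorem \ref{teo:compare}(iii), then extract a convergent subsequence with the machinery of the existence proof (cf.\ Remark \ref{rem:arbitrary_sequence}) and pass the inclusions to the $L_\loc^1$-limit. The only difference is that you spell out the induction and the inequality $h_{\affine_1,F_1,\forcing_1}\ge h_{\affine_2,F_2^*,\forcing_2}$ explicitly, which the paper leaves implicit.
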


We sketch the proof of only (a). Let $\{F_1(\tau_j,k)\}$ be flat flows starting from $F_1^0$ such that 
$$
\lim\limits_{j\to+\infty} |F_1(\tau_j,\intpart{t/\tau_j}) \Delta F_1(t)| = 0.
$$
Let also $F_2^*(\tau_j,k)$ be the flat flows starting from $F_2^0$ consisting of the maximal minimizers of $\sF_{\norm,\affine_2,\forcing_2}.$ By Theorem \ref{teo:compare} (d) 
\begin{equation}\label{uyoqqayuradi09}
F_1(\tau_j,k)\subset F_2^*(\tau_j,k),\quad k\ge0,\,j\ge1.
\end{equation}
Now consider the sequence $(F_2^*(\tau_j,\intpart{t/\tau_j})).$ 
In the proof of Theorem \ref{teo:existence_gmm}
we have constructed a not relabelled subsequence and a 
family $F_2^*(\cdot)\in 
\GMM(\mathcal F_{\norm,\affine_2,\forcing_2},\cS^*,F_2^0)$ such that 
$$
\lim\limits_{j\to+\infty} |F_2^*(\tau_j,\intpart{t/\tau_j}) \Delta F_2^*(t)| = 0
\qquad \forall t \geq 0
$$
(see also Remark \ref{rem:arbitrary_sequence}). 
Now the inclusion $F_1(\cdot)\subset F_2^*(\cdot)$ follows from \eqref{uyoqqayuradi09}.

\end{document}